\titlespacing*{\section}{0pt}{0.15\baselineskip}{0.15\baselineskip}
\titlespacing*{\subsection}{0pt}{0.15\baselineskip}{0.15\baselineskip}
\newtheorem{corollary}{Corollary}
\newtheorem{proposition}{Proposition}
\newtheorem{definition}{Definition}
\DeclareMathOperator*{\argmin}{arg\,min}
\begin{document}

\title{\vspace{-2.5cm}Sequential Stochastic Optimization in Separable Learning Environments}
\author{\vspace{-0.4cm}
	R. Reid Bishop\thanks{\texttt{rebishop@coca-cola.com}, Data Science, The Coca-Cola Company} \quad
	Chelsea C. White III\thanks{H. Milton Stewart School of Industrial \& Systems Engineering, Georgia Institute of Technology}
}
\date{\vspace{-0.65cm} August 20, 2021}
\maketitle

\vspace{-0.9cm}
\begin{abstract}
	We consider a class of sequential decision-making problems under uncertainty that can encompass various types of supervised learning concepts.  These problems have a completely observed state process and a partially observed modulation process, where the state process is affected by the modulation process only through an observation process, the observation process only observes the modulation process, and the modulation process is exogenous to control.  We model this broad class of problems as a partially observed Markov decision process (POMDP).  The belief function for the modulation process is control invariant, thus separating the estimation of the modulation process from the control of the state process. We call this specially structured POMDP the \emph{separable POMDP}, or \emph{SEP-POMDP}, and show it (i) can serve as a model for a broad class of application areas, \emph{e.g.}, inventory control, finance, healthcare systems, (ii) inherits value function and optimal policy structure from a set of completely observed MDPs, (iii) can serve as a bridge between classical models of sequential decision making under uncertainty having fully specified model artifacts and such models that are not fully specified and require the use of predictive methods from statistics and machine learning, and (iv) allows for specialized approximate solution procedures. 
\end{abstract}

\vspace{-0.2cm}
\section{Introduction \& Literature Review}
\subsection{Introduction}
The complex stochastic, sequential decision-making environments that characterize reinforcement learning applications, in general, involve choosing between actions that greedily optimize over the immediate objective and actions that enable the decision-maker to learn about the environment in which they operate --- the well-known exploitation-exploration trade off. For the Markov decision process modeling (MDP) framework upon which these reinforcement learning applications are (typically) based, modelers often assume either (1) the uncertainty in the model is already captured by known and pre-specified transition probabilities (as in canonical operations research), or (2) the uncertainty is not modeled, but rather must be \emph{explored} by taking actions within the (real or simulated) environments.

For many applications in practice, however, there are different types of uncertainty --- endogenous uncertainty that the decision-maker can control and exogenous uncertainty that they cannot. For example, airlines must consider the weather when planning routes, investors must consider macroeconomic conditions when making investment decisions, and urgent, personalized therapeutics manufacturers must consider the patient's health when making production decisions. These types of decision-making environments, in which there is a \emph{separation} between types of uncertainty, are the focus of our investigation in this paper.

We introduce a sequential stochastic optimization model framework that is both an extension of the canonical MDP, and a special case of the generalized partially-observable MDP (POMDP), in which the uncertainty exhibits a \emph{separability} property --- some of the uncertainty in the system is affected by the actions of the decision-maker, and some of the uncertainty is not. Reminiscent of the Separation Principle in optimal stochastic control (\cite{Bismut78}, \cite{Tryphon13}), we call this class of models the \emph{separable POMDP}, or \emph{SEP-POMDP}. This modeling framework is widely applicable to many operations research problems and domains, for example: 
	\begin{itemize}
		\item  \vspace{-0.3cm} \textbf{Inventory management.} Constructing optimal inventory control policies under non-stationary demand (\cite{Treharne02}) and lost sales (\cite{Zipkin08}).
		\item \vspace{-0.3cm} \textbf{Finance.} Optimizing portfolio returns under stochastic volatility (\cite{Zhou09}) and mutual fund cash balancing (\cite{nascimento2010dynamic}).
		\item \vspace{-0.3cm} \textbf{Healthcare.} Constructing optimal policies for liver transplantation acceptance (\cite{Sandikci08}, \cite{Sandikci13}) and glycemic control for diabetes (\cite{JiangPowell15}).
	\end{itemize}
 \vspace{-0.3cm} 
\noindent
We summarize the main contributions of the paper, below:
\begin{enumerate}[label = (\arabic*)]
	\item \vspace{-0.3cm} We show that the SEP-POMDP inherits structural properties of the value function and optimal policy from analogous MDPs (\emph{e.g.} monotonicity, convexity, $L^\natural$-convexity, myopic optimal policies), under broad conditions.
	
	\item \vspace{-0.3cm} We show that the separability condition in the SEP-POMDP is flexible enough to incorporate many of the most popular statistics and machine learning models used in practice. These powerful supervised learning methods can be used to explain the exogenous uncertainty in the system. To our knowledge, this is a novel generalization that permits supervised learning models to be directly incorporated into the sequential stochastic optimization model. Since Markov decision processes form the foundation of much of reinforcement learning, this provides a bridge by which supervised learning and reinforcement learning might be connected in powerful ways. Moreover, the inherited structural properties in (1) are preserved when incorporating these supervised learning models in the SEP-POMDP.
	
	\item \vspace{-0.3cm} We discuss how structural properties of the value function and/or optimal policy that the SEP-POMDP inherits and separable supervised learning models might be used to construct specialized solution procedures that are tractable for large-scale applications. 
\end{enumerate}

\subsection{Literature Review}
The contributions, above, draw upon different fields of research. The research towards (1) is primarily inspired by \cite{Porteus75} and \cite{Smith02}. \cite{Porteus75} considered a notion of structure (which we adopt) as a restricted subspace of a function space in which every function in the subspace possesses some property of interest, and presented sufficient conditions by which a dynamic program has a value function and/or optimal policy function that are structured in this sense. We observe that structure has been useful for improved implementation and, as noted by \cite{Smith02}, in developing a qualitative understanding of the model and characterizing how the results will vary with changes in model parameters. For example, the optimality of a base-stock policy for a large class of inventory control models is easy to implement and has significant impact computationally. Further, \cite{Smith02} showed that for a MDP, if the reward function satisfies a property $\mathscr{P}$ and the transition probabilities satisfy a stochastic version of property $\mathscr{P}$, then the value function satisfies property $\mathscr{P}$, where structural properties that satisfy property $\mathscr{P}$ include monotonicity, convexity, supermodularity, combinations of these, and other properties of interest. We remark that, whereas \cite{Smith02} only considers value function structure, we consider optimal policy structure as well.

The most similar research to ours with respect to (2) is \cite{BertsimasMcCord19}, in which the authors consider multi-period stochastic optimization with ``side information". We show in Section 5 that this formulation is a special case of the SEP-POMDP, and the SEP-POMDP is flexible to incorporate many other supervised learning models in addition to that of \cite{BertsimasMcCord19}. Additionally, themes of incorporating Bayesian methods into reinforcement learning using POMDPs can be found in \cite{ross2011bayesian}, but whereas \cite{ross2011bayesian} considers primarily an approximate Bayesian reinforcement learning method for generalized partially observable decision-making environments, we consider separable learning environments in which supervised learning methods may be employed.

Finally, the research towards (3) is motivated by the well-known problem with POMDPs that the belief space is uncountably infinite, leading to computational complications. Various solution approaches from exact methods (\cite{Sondik73}, \cite{Sondik78}, \cite{Kaelbling98}), to fixed grid approximations (\cite{Lovejoy91}, \cite{Hauskrecht00}), to simulation-based approximations (\cite{Pineau03}, \cite{Spaan05}) have been proposed. We apply a solution procedure that utilizes base-stock optimal policy structure, support vector machines, and belief trajectory simulation to solve an inventory control problem under delayed procurement in Section 7. We also discuss other computational procedures that build upon the literature above, as well as information relaxation (\cite{Brown10}) and heuristics, in the appendix.

\subsection{Research Outline}
We now present an outline of the paper.  The formulation of the specially structured POMDP considered is presented in Section 2.  Section 3 presents preliminary results.  Key conditioning assumptions are given in Section 3.1, where the separability condition and SEP-POMDP are defined, and extensions of the Porteus results are given in Section 3.2.  The main structural results are presented in Section 4, where Sections 4.1 and 4.2 give value function and policy function structural results, respectively.  Thus far, the paper assumes that each of the model artifacts are fully specified.  In Section 5, we more realistically loosen this assumption, assuming some of these artifacts are better known than others.  We then show how the separability condition allows for the direct incorporation of many statistics and machine learning models into the SEP-POMDP formulation.  Discriminative learning blended with forecasting is the focus of Section 5.1 while Section 5.2 considers generative learning models.  Applications are presented in Section 6, indicating that the SEP-POMDP is a robust model that can describe many important real-world decision-making problems.  Computational solution approaches are discussed in Section 7 and an illustrative example is presented.  Conclusions are given in Section 8.

\section{Problem Formulation}
Consider a POMDP that has an infinite horizon and discrete decision epochs $t = 0, 1, \ldots$, and involves a completely observed state process $\{ s_t: t \geq 0 \}$ existing in a space $\mathcal{S} \subset \mathbb{R}^{d_\mathcal{S}}$, a partially observed modulation process $\{ \mu_t: t \geq 0 \}$ in a space $\mathcal{M} \subset \mathbb{R}^{d_\mathcal{M}}$, an observation process $\{ y_t: t \geq 1 \}$ in a space $\mathcal{Y} \subset \mathbb{R}^{d_\mathcal{Y}}$, and an action process $\{ a_t: t \geq 0 \}$ in a space $\mathcal{A} = \bigcup_{s \in \mathcal{S}} \mathcal{A}(s)$, where $a_t \in \mathcal{A}( s_t ), \forall t$. Assume that these processes are linked by the conditional probability $P[ y_{t+1}, s_{t+1}, \mu_{t+1} \vert s_t, \mu_t, a_t]$. It will be convenient for notational purposes to let $P[y_{t+1}, s_{t+1}, \mu_{t+1}| s_t, \mu_t, a_t] = P[y', s', \mu'| s, \mu, a]$.

We assume that $c: \mathcal{S} \times \mathcal{Y} \times \mathcal{A} \mapsto \mathbb{R}$ is the bounded single period cost function, where $c(s_t, y_{t+1}, a_t) = c(s, y', a)$ is the cost accrued during period $[t, t+1)$.  We further assume that the action at epoch $t$ can be selected on the basis of the information received up to $t$, $\mathscr{I}_t = \{s_t, s_{t-1}, \ldots,$ $s_0, y_t, y_{t-1}, \ldots ,$ $y_1, a_{t-1}, a_{t-2}, \ldots ,$ $a_0, \mathbf{b}_0 \}$, where $\mathbf{b}_0 = \{ \mathbf{b}_0(\mu): \mu \in \mathcal{M} \}$ is the prior distribution over $\mathcal{M}$.  A function mapping the set of all $\mathscr{I}_t$ into the set of all actions for all $t$ is a feasible policy.  The problem criterion is the expected total discounted cost over the infinite horizon, where we assume $\beta$, $0 \leq \beta < 1$ is the discount factor. The problem is to determine a feasible policy that minimizes the criterion with respect to all feasible policies. We note that though we present the results that follow for this infinite horizon formulation, the results can be suitably modified to the finite horizon case (where the horizon $T < \infty$), where the cost function is permitted to be dependent upon $t$, $c_t$, and we have a terminal cost function $c_T: \mathcal{S} \mapsto \mathbb{R}$.

\section{Preliminary Results}

Results in \cite{Sondik73} and \cite{Sondik78} imply that $\{( s_t, \mathbf{b}_t ), t \geq 0 \}$ is a sufficient statistic for this problem, where $\mathbf{b}_t = \{ \mathbf{b}_t (\mu): \mu \in M \}$ is the posterior belief distribution given the information up to time $t$, $\mathscr{I}_t$, namely that $\int_{\mathcal{M}} \mathbf{b}_t(\mu) \dd{\mu} = \mathbb{P}[\mu_t \in M \vert \mathscr{I}_t]$, $\forall M \subset \mathcal{M}$. We call $\mathbf{b}_t$ the Bayesian belief function at epoch $t$ and $\{ \mathbf{b}_t, t \geq 0 \}$ the belief function process. Let
\begin{align*}
\phi( y', s'| s, \mathbf{b}, a )		&= 		\int_{\mu'} \int_{\mu} \mathbf{b}(\mu) P[y', s', \mu' \vert s, \mu, a] \dd{\mu'} \dd{\mu}		\\
\lambda(\mu' \vert y', s', s, \mathbf{b}, a)			&= 	\frac{\int_{\mu} \mathbf{b}(\mu) P[y', s', \mu' \vert s, \mu, a] \dd{\mu}}{\phi(y', s' \vert s, \mathbf{b}, a)}, 
\quad \phi(y', s' \vert s, \mathbf{b}, a) \neq 0 		\\	
\lambda(y', s', s, \mathbf{b}, a)		&= 		\left\{ \lambda(\mu' \vert y', s', s, \mathbf{b}, a), \mu' \in M  \right\}.													
\end{align*}
We can think of $ \lambda(y', s', s, \mathbf{b}, a) $ as the posterior belief function $\mathbf{b}_{t+1}$, given $\mathbf{b}_t = \mathbf{b}, a_t = a, s_t = s, s_{t+1} = s'$, and $y_{t+1} = y'$. Similarly, $\phi( y', s'| s, \mathbf{b}, a )$ is the probability density of $y_{t+1}$ and $s_{t+1}$, given that $s_t = s$, $\mathbf{b}_t = x$, and $a_t = a$. Let $V$ be the Banach space of bounded value functions which map $\mathcal{S} \times \mathcal{B}$ into $\mathbb{R}$ endowed with the sup-norm, and let $H: V \mapsto V$ be defined as
	\begin{equation}
		Hv(s,\mathbf{b}) 	= 		\min_{a \in A(s)} \left\{ \mathbb{E} \left[c(s, y', a) \vert x \right] 	+ 	\beta \int_{y', s'} \phi(y', s' \vert s, x, a) 
			v \big( s', \lambda(y', s', s, \mathbf{b}, a) \big) \dd{y'} \dd{s'}  \right\},
		\label{eq: general optimality equation}
	\end{equation}
where $\mathbb{E} \left[c(s, y', a) \vert x \right] = \int_{y', s'} \phi(y', s' \vert s, \mathbf{b}, a) c(s, y', a) \dd{y'} \dd{s'}$. The optimality equation is $v = Hv$. Results from \cite{Puterman10} guarantee, by the contraction property of $H$, the existence of a unique value function, $v^*$, such that $v^* = Hv^*$, and that this fixed point is the expected total discounted cost accrued by an optimal policy. Further, we can restrict search for an optimal policy to $t$-invariant functions that select $a_t$ on the basis of $s_t$ and $\mathbf{b}_t$. Let $\Pi$ to be the space of such $t$-invariant functions from $\mathcal{S} \times \mathcal{B}$ to $\mathcal{A}$. The function, $\pi \in \Pi$ such that $\pi( s_t, \mathbf{b}_t ) = a_t$ causing the minimum in equation (\ref{eq: general optimality equation}) to be attained is an optimal policy. The expected total discounted cost accrued by this optimal policy can be attained by recursive application of $H$, so that $\lim_{n \to \infty} \norm{ v^* - v_n} = 0$, where $v_{n+1} = Hv_n$ for all $n$, given $v_0$ is any function in $V$, and $\norm{ \cdot }$ is the sup-norm.

\subsection{Key Conditioning Assumptions.} 
By the definition of conditional probability, \vspace{-0.3cm}
	\begin{equation*} 
		P[y', s', \mu'| s, \mu, a] = P[s'| y', \mu', s, \mu, a] P[y', \mu'| s, \mu, a].	 
	\end{equation*}
\vspace{-0.3cm} We assume that \vspace{-0.3cm}
	\begin{equation}
		\begin{split}
		P[s'| y', \mu', s, \mu, a] 	&= 		P[s'| y', s, a]	\\
		P[y', \mu'| s, \mu, a]			&= 		P[y', \mu'| \mu]. 
		\end{split} \vspace{-0.3cm} 
		\label{eq: SEP-POMDP conditioning assumption}
	\end{equation}
We call the POMDP presented in Section 2 with these key conditioning assumptions the \textit{separable POMDP}, or the SEP-POMDP.

We remark that the standard POMDP definition in the literature (\cite{Sondik73}, \cite{Sondik78}) assumes three processes, the partially observed state process, the observation process, and the action process, all of which are linked by the given probability $P [y', s' \vert s, a]$.  This standard definition assumes $P [y', s' \vert s, a] = P [y' \vert s', s, a] P [s' \vert s, a]$, where $P [y' \vert s', s, a]$ describes the relationship between the state, observation, and action processes and $P[s' \vert s, a]$ describes the controlled dynamics of the state process.  We note that the conditioning for the POMDP considered in this paper, $P [y', s', 
\mu' \vert s, \mu, a] = P [s' \vert y', s, a] P [y', \mu' \vert \mu]$, assumes that $s'$ is dependent on $y'$, rather than vice versa.  

Thus, for the SEP-POMDP we assume that the state process is affected by the modulation process only through the observation process, the observation process only observes the modulation process, and the modulation process is exogenous to control.  Under these assumptions, we can rewrite $\phi$,
	\begin{align*}
		\phi(y', s' \vert s, \mathbf{b}, a)  		&= 		\int_{\mu} \mathbf{b}(\mu) \int_{\mu'} p(s' \vert y', s, a) P[y', \mu' \vert \mu] \dd{\mu} \dd{\mu'}		\\
		&=		p(s' \vert y', s, a)  \int_{\mu, \mu'} \mathbf{b}(\mu) P[y', \mu' \vert \mu]	\dd{\mu} \dd{\mu'}		  \\
		&= 		p(s' \vert y', s, a) \sigma(y' \vert \mathbf{b}),
	\end{align*}
where we let $p(s' \vert y', s, a) = P[s' \vert y', s, a]$, and $\sigma(y' \vert x) = \int_{\mu, \mu'} \mathbf{b}(\mu) P[y', \mu' \vert \mu]$. We can then rewrite $\lambda$, by plugging in for $\phi$ and assuming $\phi(y', s' \vert s, \mathbf{b}, a) \neq 0$, as follows:
	\begin{align*}
		\lambda(\mu' \vert y', s', s, \mathbf{b}, a)			 &= 	\frac{\int_{\mu} \mathbf{b}(\mu) P[y', s', \mu' \vert s, \mu, a] \dd{\mu}}{\phi(y', s' \vert s, \mathbf{b}, a)}\\	
		&= 	 \frac{\int_{\mu} \mathbf{b}(\mu) P[y', s', \mu' \vert s, \mu, a] \dd{\mu}}{p(s' \vert s, y', a) \sigma(y' \vert \mathbf{b})} 		\\
		&= 	 \frac{ \int_{\mu} \mathbf{b}(\mu) P[y', \mu' \vert \mu] \dd{\mu} }{ \sigma(y' \vert \mathbf{b}) }	.		
	\end{align*}
Thus, $\lambda(\mu' \vert y', s', s, \mathbf{b}, a)$ is independent of $s', s, a$, and we denote $\lambda(\mu' \vert y', s', s, \mathbf{b}, a) = \lambda( \mu' \vert y', \mathbf{b})$ for all $\mu' \in \mathcal{M}$ and $\lambda(y',\mathbf{b}) = \{ \lambda(\mu' \vert y', \mathbf{b}), \mu' \in \mathcal{M} \}$. 

Note $\mathbb{E} [ c(s,y',a) \vert \mathbf{b}] = \int_{y', \mu'} \int_{\mu} P[y', \mu' \vert \mu] \mathbf{b}(\mu) c(s, y', a) \dd{y'} \dd{\mu'} \dd{\mu} = \int_{y'} \sigma(y' \vert \mathbf{b}) c(s, y', a) \dd{y'}$, and let
\[	h_{y'}(s, a, \bar{v})	=	c(s, y', a) + \beta \int_{s'}p(s' \vert y', s, a) \bar{v}(s') \dd{s'}.	\]

We then reformulate the operator $H$ as follows:
\[	Hv(s,\mathbf{b})	=	\min_{a \in \mathcal{A}(s)} \left\{	\int_{y'} \sigma(y' \vert \mathbf{b}) h_{y'} \big( s, a, v( \cdot , \lambda(y', \mathbf{b}) ) \big) \dd{y'}	\right\}. \]

We can now define the completely observed \emph{MDP analog} to the SEP-POMDP. Let $MDP_{y'}$ have single period cost function $c(s, y', a)$, transition structure $\{ p(s' \vert y', s, a) \}$, and operator 
	\begin{equation}	
		\bar{H}_{y'} \bar{v}(s) = \min_{a \in \mathcal{A}(s)} h_{y'}(s, a, \bar{v}).
		\label{eq: optimality equation of MDP analog}
	\end{equation}

We call the collection $\{ MDP_{y'}: y' \in \mathcal{Y} \}$ the completely observed MDP analog of the SEP-POMDP. We will seek to highlight the significance of this relationship to the MDP analogs later when we discuss conditions under which the SEP-POMDP inherits structural properties from the MDP analogs, but for now, we merely note that the observation realization in the MDP analog is a known quantity and functions as a parameter for the MDP. We might consider that $y'$ is a particular observable realization of the uncertainty in state dynamics for a traditional MDP. In the SEP-POMDP, this observation is permitted to be stochastic and, as we will see in Section 5, can be modeled using statistics and machine learning methods.

\subsection{The Porteus Results Extended} Let $V_\mathbf{b}$ denote the halfspace of $V$ induced by affixing $\mathbf{b} \in \mathcal{B}$ (\textit{i.e.} $V_\mathbf{b} = \{ f(\cdot, \mathbf{b}): f \in V \}$, $\forall \mathbf{b} \in \mathcal{B}$) and $\Pi_\mathbf{b}$ denote the halfspace of $\Pi$ induced by affixing $\mathbf{b} \in \mathcal{B}$. Suppose $\tilde{V}$ is a space of structured value functions $\mathcal{S} \mapsto \mathbb{R}$, and $\tilde{\Pi}$ is a space of structured Markovian deterministic policy functions $\mathcal{S} \mapsto \mathcal{A}$.

We now present the three structural conditions found in \cite{Porteus75} extended to the SEP-POMDP setting:
\begin{enumerate}[label = P(\alph*), leftmargin = 0.75in]
	\item Structured space of functions contains its limit points
	
	$\tilde{V}$ is a closed subset of $V_\mathbf{b}, \forall \mathbf{b} \in \mathcal{B}$.
	
	\item Structured Value Preservation
	
	$v(\cdot, \mathbf{b}) \in \tilde{V}, \forall \mathbf{b} \in \mathcal{B} 		\Rightarrow 		Hv(\cdot,\mathbf{b}) \in \tilde{V}, \forall \mathbf{b} \in \mathcal{B}$.
	
	\item Structured Policy Attainment
	\begin{align*}
	v(\cdot, \mathbf{b}) \in \tilde{V}, \forall \mathbf{b} \in \mathcal{B} \ 	\Rightarrow  \	&\exists \pi(\cdot, \mathbf{b}) \in \tilde{\Pi}, \forall \mathbf{b} \in \mathcal{B} \text{ s.t. } \\
	&Hv(\cdot, \mathbf{b}) = \int_{y'} \sigma(y' \vert \mathbf{b}) h_{y'} \big( \cdot, \pi(\cdot, \mathbf{b}), v(\cdot, \lambda(y',\mathbf{b}) ) \big) \dd{y'}, 	\forall \mathbf{b} \in \mathcal{B}. 
	\end{align*}
\end{enumerate}

We refer to P(a), P(b), and P(c) as the \textit{extended Porteus conditions}. Condition P(a) ensures that the limit point of a sequence of value functions obtained by the value iteration algorithm will be in the space of structured value functions, condition P(b) ensures that the structure of the value function is preserved when applying the dynamic programming operator $H$, and condition P(c) insures that for all structured value functions on $\mathcal{S}$, it suffices to search the space of structured policies (smaller than the space of all policies) for a $v$-improving policy.

We present a proposition in which we establish that P(a), P(b), and P(c) are sufficient conditions to guarantee that the value function and an optimal policy function are structured on $\mathcal{S}$. Subsequent results pertaining to structure on $\mathcal{S}$ demonstrate sufficient conditions for P(a), P(b), and P(c) to hold, by investigating the SEP-POMDP model primitives and the relationship to the MDP analog.
\begin{proposition}
	Assume the extended Porteus conditions hold. Then there exists a $\pi^*(\cdot,x) \in \tilde{\Pi}$ and a $v^*(\cdot, x) \in \tilde{V}$ for all $\mathbf{b} \in \mathcal{B}$ such that
	\[	v^*(s,\mathbf{b})	=	Hv^*(s,\mathbf{b})	=	\int_{y'} \sigma(y' \vert \mathbf{b}) h_{y'} \big( s, \pi^*(s,\mathbf{b}), v^*(\cdot, \lambda(y',\mathbf{b}) ) \big) \dd{y'}	\]
	for all $(s,\mathbf{b}) \in \mathcal{S} \times \mathcal{B}$.
	\label{Porteus}
\end{proposition}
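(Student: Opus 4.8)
The plan is to run value iteration and use the three extended Porteus conditions to carry structure from the iterates to the fixed point, then read off a structured policy: P(b) propagates structure along the iteration, P(a) transfers it to the limit, and P(c) supplies the minimizing policy once the value function is known to be structured.

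First I would seed the iteration with a structured function. Since $\tilde{V}$ is nonempty (otherwise the statement is vacuous) and $\tilde{V} \subseteq V_\mathbf{b}$ forces its elements to be bounded, I can fix any $g \in \tilde{V}$ and define $v_0 \in V$ by $v_0(s, \mathbf{b}) = g(s)$, so that every slice $v_0(\cdot, \mathbf{b}) = g$ lies in $\tilde{V}$. Setting $v_{n+1} = Hv_n$, an induction using P(b) shows that if $v_n(\cdot, \mathbf{b}) \in \tilde{V}$ for all $\mathbf{b} \in \mathcal{B}$, then $v_{n+1}(\cdot, \mathbf{b}) = Hv_n(\cdot, \mathbf{b}) \in \tilde{V}$ for all $\mathbf{b}$; hence every iterate has all slices in $\tilde{V}$.

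Next I would invoke the contraction result from \cite{Puterman10}, which gives a unique fixed point $v^* = Hv^*$ with $\lim_{n \to \infty} \norm{v^* - v_n} = 0$ in the sup-norm on $V$. The decisive observation is that product sup-norm convergence restricts to each slice: for fixed $\mathbf{b}$, $\sup_{s \in \mathcal{S}} \abs{v_n(s, \mathbf{b}) - v^*(s, \mathbf{b})} \leq \norm{v_n - v^*} \to 0$, so $v_n(\cdot, \mathbf{b}) \to v^*(\cdot, \mathbf{b})$ in $V_\mathbf{b}$. Because each $v_n(\cdot, \mathbf{b}) \in \tilde{V}$ and $\tilde{V}$ is closed in $V_\mathbf{b}$ by P(a), the limit satisfies $v^*(\cdot, \mathbf{b}) \in \tilde{V}$ for every $\mathbf{b} \in \mathcal{B}$, which is the value-function half of the claim.

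Finally, with $v^*(\cdot, \mathbf{b}) \in \tilde{V}$ for all $\mathbf{b}$, condition P(c) furnishes a structured policy $\pi^*(\cdot, \mathbf{b}) \in \tilde{\Pi}$ attaining the minimum, so that $Hv^*(\cdot, \mathbf{b}) = \int_{y'} \sigma(y' \vert \mathbf{b}) h_{y'}\big(\cdot, \pi^*(\cdot, \mathbf{b}), v^*(\cdot, \lambda(y', \mathbf{b}))\big) \dd{y'}$; combining this with $v^* = Hv^*$ yields the displayed identity. I expect the main obstacle to be the third step rather than any computation: the Porteus conditions are phrased slice-wise in $V_\mathbf{b}$, whereas Puterman's convergence lives in the full product norm, so the crux is confirming rigorously that the product-norm limit restricts to the $V_\mathbf{b}$-limit in each slice, so that the closedness of $\tilde{V}$ may be applied coordinate-by-coordinate. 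A minor point to check along the way is that the chosen seed $v_0$ genuinely lies in $V$; everything else is bookkeeping.
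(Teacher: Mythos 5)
Your proposal is correct and is essentially the paper's own argument: the paper proves Proposition \ref{Porteus} by citing it as a straightforward extension of Theorem 6.11.1 in \cite{Puterman10}, whose proof is exactly your value-iteration scheme --- propagate structure through the iterates via P(b), pass to the limit via the contraction property and closedness P(a), and extract the structured minimizing policy via P(c). Your slice-wise handling of the sup-norm convergence (restricting the product-norm limit to each $V_\mathbf{b}$) is precisely the bookkeeping needed to extend Puterman's theorem to the belief-indexed setting, so there is no gap.
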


Proof of the above result is a straightforward extension of Theorem 6.11.1 in \cite{Puterman10}. We remark that the structured optimal value function and the structured optimal policy are both modulated by the belief process $\{\mathbf{b}_t, t >0 \}$. The following corollary establishes that it is sufficient for only P(a) and P(b) to hold to establish structure of the value function on $\mathcal{S}$, absent structure in the policy.

\begin{corollary}
	If only P(a) and P(b) hold, then $v^*(\cdot, \mathbf{b}) \in \tilde{V}$ for all $\mathbf{b} \in \mathcal{B}$.
	\label{cor: Porteus}
\end{corollary}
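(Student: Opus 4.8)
The plan is to run the value-iteration argument of Proposition \ref{Porteus} but retaining only P(a) and P(b), and to observe that the step where P(c) was invoked is exactly the step producing policy structure, so dropping it costs us nothing for the value function. First I would recall from the preliminary results that $H$ is a contraction on the Banach space $V$ (by \cite{Puterman10}), so the iterates $v_{n+1} = Hv_n$ satisfy $\lim_{n\to\infty}\norm{v^* - v_n} = 0$ for \emph{any} initialization $v_0 \in V$. I would then initialize inside the structured class: choose $v_0$ with $v_0(\cdot,\mathbf{b}) \in \tilde{V}$ for all $\mathbf{b} \in \mathcal{B}$ (such a $v_0$ exists since $\tilde{V}$ is a nonempty subset of each halfspace $V_\mathbf{b}$). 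Condition P(b) then gives, by a one-line induction on $n$, that every iterate is structured, i.e.\ $v_n(\cdot,\mathbf{b}) \in \tilde{V}$ for all $n$ and all $\mathbf{b} \in \mathcal{B}$: the base case holds by the choice of $v_0$, and the inductive step is precisely the statement of P(b) applied to $v_n$.

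Next I would transfer the global sup-norm convergence to convergence within a fixed halfspace and then close the argument with P(a). Fix an arbitrary $\mathbf{b} \in \mathcal{B}$. Since
\[
	\sup_{s \in \mathcal{S}} \abs{v_n(s,\mathbf{b}) - v^*(s,\mathbf{b})} \ \leq \ \norm{v_n - v^*} \ \to \ 0,
\]
the sequence $\{v_n(\cdot,\mathbf{b})\}$ converges to $v^*(\cdot,\mathbf{b})$ in the halfspace $V_\mathbf{b}$. By the previous paragraph each $v_n(\cdot,\mathbf{b}) \in \tilde{V}$, and by P(a) the set $\tilde{V}$ is a closed subset of $V_\mathbf{b}$; hence the limit $v^*(\cdot,\mathbf{b})$ lies in $\tilde{V}$. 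As $\mathbf{b}$ was arbitrary, $v^*(\cdot,\mathbf{b}) \in \tilde{V}$ for all $\mathbf{b} \in \mathcal{B}$, which is the claim. Notice that P(c), which in Proposition \ref{Porteus} supplied a structured $v$-improving policy, is never used here, consistent with the corollary concluding only value-function structure and not policy structure.

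The only genuinely delicate point, and the step I would treat most carefully, is the topological descent from the whole space to the halfspace: one must check that the notion of closedness asserted in P(a) (closedness of $\tilde{V}$ \emph{inside} $V_\mathbf{b}$) is compatible with the limit produced by the globally convergent iterates. This is exactly what the displayed inequality secures, since the sup over $\mathcal{S}$ at a fixed $\mathbf{b}$ is dominated by the sup over $\mathcal{S}\times\mathcal{B}$, so the halfspace-restricted sequence is genuinely Cauchy/convergent in $V_\mathbf{b}$ and its limit there coincides with $v^*(\cdot,\mathbf{b})$. Everything else—the contraction, the induction, and the appeal to closedness—is routine, so I expect no further obstacle.
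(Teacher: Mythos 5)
Your proof is correct and is essentially the argument the paper intends: the corollary is an instance of the Porteus/Puterman (Theorem 6.11.1) scheme, where a structured initialization plus P(b) gives structured value-iteration iterates, sup-norm contraction gives convergence to $v^*$, and P(a) (closedness in each halfspace $V_\mathbf{b}$) passes the structure to the limit, with P(c) needed only for policy structure. Your careful handling of the restriction from $\mathcal{S}\times\mathcal{B}$ convergence to halfspace convergence at fixed $\mathbf{b}$ matches the paper's implicit reasoning, so there is nothing to add beyond the (standard, implicit) assumption that $\tilde{V}$ is nonempty.
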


\section{Main Structural Results}

We now present our primary structural results, which formalize the \textit{inheritance property} of SEP-POMDPs --- that value function \textit{and} optimal policy function structure of the MDP analog are inherited by the SEP-POMDP. Oftentimes in modeling efforts we make stylized and unrealistic simplifying assumptions for the sake of analytical tractability and gaining important qualitative intuition about a system (\textit{e.g.} demand is \textit{i.i.d.} across decision epochs, a firm operates independent of competitors). The thrust of the results in this section is that, for an important class of properties and models, we may analyze a simpler model and guarantee the structural properties hold for a more robust model. As we will see in later sections, this simpler model might assume \textit{e.g.} constant observations, and the structure of the optimal value function, or of an optimal policy, can still hold even under complex and sophisticated machine learning models for those observations. Thus, analytical tractability need not be traded for modeling realism.

\textbf{Preliminary definitions.} Before we state our inheritance proposition, we need to introduce two notions, as defined in \cite{Smith02}: \textit{C3 property} and its \textit{joint extension}.

\begin{definition}(C3 property)
	$\mathscr{P}$ is a closed convex cone property (C3) if and only if the set of all real-valued functions on $\mathcal{S}$ satisfying $\mathscr{P}$ forms a closed convex cone in the topology of pointwise convergence.
\end{definition}

Proposition 1 in \cite{Smith02} gives us an equivalent definition of C3 property in terms of an inequality ``test of satisfaction". A real-valued function $f$ on $\mathcal{S}$ satisfies a C3 property if and only if there exists a finite set of points $\{ s_j, j \in J_k \}$, $\{ s_i, i \in I_k \}$ and positive weights $\{ \gamma_j, j \in J_k\}$ and $\{ \gamma_i, i \in I_k\}$ such that
\[ \sum_{j \in J_k}	\gamma_j f(s_j)  	\leq 	\sum_{i \in I_k} \gamma_i f(s_i),    \quad 	\forall k \in K	\]
where $K$ is an index set.

Many structural properties, $\mathscr{P}$, of value functions with which we are interested in (\textit{e.g.} monotonicity, convexity) are \textit{C3} properties. The notion of the \textit{joint extension} of a \textit{C3} property allows us to extend the concept to real-valued functions on $\mathcal{S} \times \mathcal{A}$.
\begin{definition} (Joint Extension)
	Given a C3 property $\mathscr{P}$ on $\mathcal{S}$, a function $f: \mathcal{S} \times \mathcal{A} \mapsto \mathbb{R}$ satisfies a joint extension of $\mathscr{P}$ on $\mathcal{S} \times \mathcal{A}$, call it $\mathscr{P}^*$, if and only if for any $k \in K$, actions $\{ a_j, j \in J_k \}$, $\exists \{ a_i, i \in I_k \}$ such that 
	\[ \sum_{j \in J_k} \gamma_j f( s_j, a_j ) 	\leq 		\sum_{i \in I_k} \gamma_i f(s_i, a_i) \]
	where $\{ \gamma_j, j \in J_k \}$, $\{ \gamma_i, i \in I_k \}$ are finite sets of positive weights associated with the test of satisfaction for $\mathscr{P}$.
\end{definition}
The class of joint extensions of \textit{C3} properties includes subadditivity, L$^\natural$-convexity, joint submodularity, combinations of these, and others. It will be useful for us to note (especially in discussing separability, below) that all joint extensions of \textit{C3} properties are convex cones, in the sense that if $f$ and $g$ satisfy joint \textit{C3} property $\mathscr{P}^*$, then $\alpha f  + \beta g$ also has property $\mathscr{P}^*$, for $\alpha, \beta \in \mathbb{R}$.

\subsection{Structure on $\mathcal{S}$}
We begin by stating the Porteus conditions for MDPs, and recapitulating, for ease of reference, the structural implications for the MDP analog.
\begin{enumerate}[label = P$_{y'}$(\alph*), start = 2, leftmargin = 0.75in]
	\item Structured Value Preservation
	
	$\tilde{v} \in \tilde{V}		\Rightarrow 		\bar{H}_{y'}\tilde{v} \in \tilde{V}$.
	
	\item Structured Policy Attainment
	
	$\tilde{v} \in \tilde{V}	\Rightarrow  \exists \tilde{\pi} \in \tilde{\Pi} \text{ s.t. } \bar{H}_{y'}\tilde{v} = h_{y'} ( \cdot, \tilde{\pi}, \tilde{v} ). $
\end{enumerate}
The following proposition is due to \cite{Porteus75}; note Theorem 6.11.1 in \cite{Puterman10}.
\begin{proposition}
	Suppose P(a), P$_{y'}$(b), and P$_{y'}$(c) hold. Then there exists a $\pi^*_{y'} \in \tilde{\Pi}$ and a $v^*_{y'} \in \tilde{V}$ such that $v^*_{y'}(s) = \bar{H}_{y'}v^*_{y'}(s) = h_{y'}\big( s, \pi^*_{y'}(s), v^*_{y'} \big)$, for all $s \in \mathcal{S}$.
	\label{prop: structure MDP analog}
\end{proposition}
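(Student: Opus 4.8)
The plan is to mirror the value-iteration proof of Theorem 6.11.1 in \cite{Puterman10}, specialized to the single MDP analog $MDP_{y'}$. The operator $\bar{H}_{y'}$ defined in equation (\ref{eq: optimality equation of MDP analog}) acts on the Banach space $V_\mathbf{b}$ of bounded functions $\mathcal{S}\mapsto\mathbb{R}$ under the sup-norm; since $c$ is bounded, $0\le\beta<1$, and $p(\cdot\vert y',s,a)$ is a probability measure, the standard argument shows $\bar{H}_{y'}$ is a $\beta$-contraction. The Banach fixed-point theorem then furnishes a unique $v^*_{y'}\in V_\mathbf{b}$ with $v^*_{y'}=\bar{H}_{y'}v^*_{y'}$, and for any starting point $v_0$ the iterates $v_{n+1}=\bar{H}_{y'}v_n$ satisfy $\norm{v^*_{y'}-v_n}\to 0$.

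First I would establish that $v^*_{y'}\in\tilde{V}$. Choose any $v_0\in\tilde{V}$ (nonempty by assumption). Applying P$_{y'}$(b) inductively, each iterate $v_n=\bar{H}_{y'}^{\,n} v_0$ lies in $\tilde{V}$. Because the contraction yields sup-norm convergence $v_n\to v^*_{y'}$ and P(a) asserts that $\tilde{V}$ is a closed subset of $V_\mathbf{b}$, the limit point $v^*_{y'}$ must also lie in $\tilde{V}$. This settles the value-function half of the claim.

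Next I would extract the structured optimal policy. Applying P$_{y'}$(c) to the structured function $v^*_{y'}\in\tilde{V}$ produces a $\pi^*_{y'}\in\tilde{\Pi}$ with $\bar{H}_{y'}v^*_{y'}=h_{y'}(\cdot,\pi^*_{y'},v^*_{y'})$; that is, $\pi^*_{y'}(s)$ attains the minimum defining $\bar{H}_{y'}v^*_{y'}(s)$ at every $s$. Combining this with the fixed-point identity $v^*_{y'}=\bar{H}_{y'}v^*_{y'}$ gives $v^*_{y'}(s)=\bar{H}_{y'}v^*_{y'}(s)=h_{y'}\big(s,\pi^*_{y'}(s),v^*_{y'}\big)$ for all $s\in\mathcal{S}$, which is exactly the asserted chain of equalities. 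Since $v^*_{y'}$ is then the fixed point of the policy-evaluation operator for $\pi^*_{y'}$ and simultaneously the optimal value, standard MDP theory certifies that $\pi^*_{y'}$ is optimal.

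The one point that warrants care — and the only real obstacle — is matching the mode of convergence delivered by the contraction with the closedness hypothesis P(a). The condition P(a) closes $\tilde{V}$ in the sup-norm topology of $V_\mathbf{b}$, whereas the \emph{C3} properties underlying $\tilde{V}$ are closed in the (weaker) topology of pointwise convergence; since sup-norm convergence implies pointwise convergence, pointwise-closedness is the stronger hypothesis and P(a) is precisely the requirement the value-iteration limit needs, so the two are compatible and no gap arises. I would make this alignment explicit. A secondary technical check is that $\bar{H}_{y'}$ genuinely maps bounded functions to bounded functions and that the minimum in (\ref{eq: optimality equation of MDP analog}) is attained, so that P$_{y'}$(c) is applicable; both follow from boundedness of $c$ and the measurable-selection conditions implicit in the model setup.
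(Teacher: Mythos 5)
Your proof is correct and follows essentially the same route as the paper, which gives no independent argument but defers to \cite{Porteus75} and Theorem 6.11.1 of \cite{Puterman10} --- precisely the structured value-iteration argument you spell out (contraction mapping, P$_{y'}$(b) to keep iterates in $\tilde{V}$, P(a) to pass to the limit, then P$_{y'}$(c) to extract the structured minimizing policy). Your closing remark on topology is also sound: closedness under pointwise convergence implies closedness under sup-norm convergence, so no gap arises there.
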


\begin{corollary}
	Suppose P(a) and P$_{y'}$(b) hold. Then $v^*_{y'} \in \tilde{V}$.
\end{corollary}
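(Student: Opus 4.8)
The plan is to mirror the value-iteration argument underlying Corollary~\ref{cor: Porteus} (the SEP-POMDP version), now applied to the fixed analog operator $\bar{H}_{y'}$ in place of $H$; equivalently, this corollary is to Proposition~\ref{prop: structure MDP analog} what Corollary~\ref{cor: Porteus} is to Proposition~\ref{Porteus}, obtained by dropping the policy-attainment hypothesis P$_{y'}$(c). Two ingredients drive the proof. First, $\bar{H}_{y'}$ is the dynamic programming operator of a discounted MDP with bounded cost, so by the standard results of \cite{Puterman10} it is a $\beta$-contraction on the Banach space of bounded functions $\mathcal{S}\mapsto\mathbb{R}$ under the sup-norm; hence its unique fixed point $v^*_{y'}$ is the sup-norm limit of the iterates $v_{n+1}=\bar{H}_{y'}v_n$ starting from any $v_0$. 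Second, P$_{y'}$(b) asserts precisely that $\bar{H}_{y'}$ maps $\tilde{V}$ into itself.

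Concretely, I would fix any $v_0\in\tilde{V}$ (the space is nonempty; for the properties $\mathscr{P}$ of interest it contains, for instance, the constant functions). A trivial induction using P$_{y'}$(b) then shows that every iterate $v_n=\bar{H}_{y'}^{\,n}v_0$ lies in $\tilde{V}$. Invoking the contraction property gives $\norm{v_n-v^*_{y'}}\to 0$, so $\{v_n\}$ is a sequence in $\tilde{V}$ converging to $v^*_{y'}$. Since P(a) states that $\tilde{V}$ is a closed subset of $V_\mathbf{b}$ (equipped with the sup-norm), the limit of this structured sequence remains in $\tilde{V}$, yielding $v^*_{y'}\in\tilde{V}$.

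The only delicate point — and the step I would flag as the main obstacle — is reconciling the two notions of closedness in play: P(a) is a sup-norm statement, whereas the C3-property definition is phrased in the topology of pointwise convergence. I would resolve this by noting that sup-norm convergence implies pointwise convergence, so $v^*_{y'}$ is in particular a pointwise limit of elements of $\tilde{V}$; because a C3 property defines a closed convex cone in the pointwise topology, this pointwise limit again satisfies $\mathscr{P}$, and the conclusion $v^*_{y'}\in\tilde{V}$ holds under either reading of ``closed.'' Beyond this point the argument is routine: the result is essentially Theorem 6.11.1 of \cite{Puterman10} specialized to each $MDP_{y'}$, and the SEP-POMDP belief layer plays no role here, since $y'$ is held fixed as a parameter throughout.
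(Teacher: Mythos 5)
Your proposal is correct and matches the argument the paper relies on: the paper states this corollary without explicit proof as the standard Porteus/Puterman (Theorem 6.11.1) result, i.e., value iteration from a structured $v_0$, preservation of structure at each step via P$_{y'}$(b), sup-norm convergence by contraction, and membership of the limit in $\tilde{V}$ via the closedness condition P(a). Your additional remark reconciling sup-norm and pointwise closedness is a sensible clarification but does not change the substance; the route is the same.
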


Suppose $\tilde{F}$ is a space of functions from $\mathcal{S} \times \mathcal{A}$ to $\mathbb{R}$ that is a convex cone. $\tilde{F}$ can be defined by a joint extension of a C3 property, $\mathscr{P}^*$, and thus encompasses the properties discussed in \cite{Smith02}. Further, let $\Delta$ be the space of feasible MDP analog policies from $\mathcal{S}$ to $\mathcal{A}$ (note that $\tilde{\Pi} \subseteq \Delta$). We present conditions by which the SEP-POMDP \textit{inherits} this MDP analog structure:
\begin{enumerate}[label = B(\alph*), start = 1, leftmargin = 0.75in]
	\item $\tilde{v} \in \tilde{V}		\Rightarrow 		h_{y'}(\cdot, \cdot, \tilde{v})\in \tilde{F}$
	
	\item $f \in \tilde{F}	\Rightarrow  \min\limits_{\delta \in \Delta} f^\delta \in \tilde{V}$
	
	\item $f \in \tilde{F} 	\Rightarrow 	\exists \tilde{\pi} \in \tilde{\Pi} $ s.t. $ \min\limits_{\delta \in \Delta}f^\delta = f^{\tilde{\pi}}$,
\end{enumerate}
where $f^\delta(s) = f(s, \delta(s))$ for all $s \in S$, and the minimum with respect to $\delta \in \Delta$ is taken pointwise, \textit{i.e.} $\left[ \min_{\delta \in \Delta} f^\delta \right](s) = \min_{a \in \mathcal{A}(s)} f(s, a)$ for all $s \in \mathcal{S}$.

Condition B(a) guarantees that, for the MDP analog, the function $h_{y'}$ is structured on $\mathcal{S} \times \mathcal{A}$. We recognize that this structure must be preserved under expectation in order for the fixed point of the optimality equation for the SEP-POMDP to inherit this structure, which is guaranteed in that $\tilde{F}$ is a space of functions that is a convex cone. Condition B(b) ensures that the minimization operation over feasible policies maps functions from $\tilde{F}$ into $\tilde{V}$. Finally, condition B(c) supposes we know, or can show, that minimizing functions of a certain structure on $\mathcal{S} \times \mathcal{A}$ yields a structured optimal policy. In fact, these conditions are quite mild, and hold for every one of the applications in Section 6. There are various results in the literature in this vein, \textit{e.g.} results pertaining to minimizing submodular functions on a lattice (\cite{Topkis78}) and minimizing $L^\natural$-convex functions (\cite{Zipkin08}). 

Note that B(a) and B(b) imply that P$_{y'}$(b) holds for all $y' \in \mathcal{Y}$, and B(a) and B(c) imply that P$_{y'}$(c) holds for all $y' \in \mathcal{Y}$. Thus, these are sufficient conditions for guaranteeing that the MDP analog is structured in its value function and an optimal policy by Proposition \ref{prop: structure MDP analog}. Our next proposition formalizes the \textit{inheritance property} of SEP-POMDPs by demonstrating that these sufficient conditions for guaranteeing structure for the MDP analog are, in fact, also sufficient for guaranteeing the SEP-POMDP is structured on $S$ in the same way. The proof follows by demonstrating that B(a), B(b), and B(c) are sufficient for guaranteeing that P(b) and P(c) hold, and then applying Proposition \ref{Porteus}.

\begin{proposition}
	Suppose P(a), B(a), B(b), and B(c) hold. Then there exists a $\pi^*(\cdot,\mathbf{b}) \in \tilde{\Pi}$ and a $v^*(\cdot, \mathbf{b}) \in \tilde{V}$ for all $\mathbf{b} \in \mathcal{B}$ such that
	\[	v^*(s,\mathbf{b})	=	Hv^*(s,\mathbf{b})	=	\int_{y'} \sigma(y' \vert \mathbf{b}) h_{y'} \big( s, \pi^*(s,\mathbf{b}), v^*(\cdot, \lambda(y',\mathbf{b}) ) \big) \dd{y'}	\]
	for all $(s,\mathbf{b}) \in \mathcal{S} \times \mathcal{B}$.
	\label{prop: SEP-POMDP inheritance}
\end{proposition}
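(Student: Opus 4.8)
The plan is to follow the roadmap indicated just before the statement: show that the hypotheses P(a), B(a), B(b), B(c) force the extended Porteus conditions P(b) and P(c) to hold for the SEP-POMDP operator $H$, and then invoke Proposition \ref{Porteus}. The one structural fact I would lean on throughout is that $\tilde{F}$, being a joint extension of a C3 property, is a convex cone closed under the relevant limiting operations, so that averaging members of $\tilde{F}$ against the nonnegative density $\sigma(\cdot\vert\mathbf{b})$ keeps us inside $\tilde{F}$.

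First, to establish P(b), I would fix an arbitrary $\mathbf{b} \in \mathcal{B}$ and a $v$ with $v(\cdot, \mathbf{b}') \in \tilde{V}$ for all $\mathbf{b}' \in \mathcal{B}$. For each observation $y'$ the posterior $\lambda(y', \mathbf{b})$ is again a valid belief in $\mathcal{B}$, so $v(\cdot, \lambda(y', \mathbf{b})) \in \tilde{V}$; applying B(a) with $\tilde{v} = v(\cdot, \lambda(y', \mathbf{b}))$ then gives $h_{y'}(\cdot, \cdot, v(\cdot, \lambda(y', \mathbf{b}))) \in \tilde{F}$ for every $y'$. I would then introduce the function on $\mathcal{S} \times \mathcal{A}$,
\[ f(s,a) = \int_{y'} \sigma(y' \vert \mathbf{b}) \, h_{y'}\big(s, a, v(\cdot, \lambda(y', \mathbf{b}))\big) \dd{y'}, \]
and argue $f \in \tilde{F}$ because it is a nonnegative-weighted integral of members of the convex cone $\tilde{F}$. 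Since $\big[\min_{\delta \in \Delta} f^\delta\big](s) = \min_{a \in \mathcal{A}(s)} f(s,a) = Hv(s, \mathbf{b})$ by the definition of $H$, condition B(b) yields $Hv(\cdot, \mathbf{b}) \in \tilde{V}$; as $\mathbf{b}$ was arbitrary, P(b) holds.

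Second, for P(c) I would reuse the same $f \in \tilde{F}$ and apply B(c), which supplies a policy $\tilde{\pi} \in \tilde{\Pi}$ attaining the pointwise minimum, $\min_{\delta \in \Delta} f^\delta = f^{\tilde{\pi}}$. Because $f$ depends on the affixed belief $\mathbf{b}$, so does the attaining policy; writing $\pi(\cdot, \mathbf{b}) := \tilde{\pi}$ and unfolding the definition of $f$ gives exactly
\[ Hv(\cdot, \mathbf{b}) = \int_{y'} \sigma(y' \vert \mathbf{b}) \, h_{y'}\big(\cdot, \pi(\cdot, \mathbf{b}), v(\cdot, \lambda(y', \mathbf{b}))\big) \dd{y'}, \]
with $\pi(\cdot, \mathbf{b}) \in \tilde{\Pi}$ for every $\mathbf{b}$, which is precisely P(c). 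With P(a) assumed and P(b), P(c) now in hand, Proposition \ref{Porteus} delivers the structured fixed point $v^*(\cdot, \mathbf{b}) \in \tilde{V}$ and optimal policy $\pi^*(\cdot, \mathbf{b}) \in \tilde{\Pi}$ satisfying the claimed equation.

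The step I expect to be the main obstacle is the assertion that $f \in \tilde{F}$, i.e.\ that $\tilde{F}$ is closed not merely under finite positive combinations but under integration against the observation density $\sigma(\cdot \vert \mathbf{b})$. The convex-cone property of $\tilde{F}$ recorded earlier handles finite sums, but the inequality ``test of satisfaction'' for a joint extension $\mathscr{P}^*$ selects the right-hand-side actions $\{a_i\}$ in a manner that may vary with the integrand $h_{y'}$; to pass to the integral one must verify these choices can be made uniformly in $y'$, or approximate the integral by finite nonnegative-weighted sums and invoke closedness of $\mathscr{P}^*$ under pointwise limits (the closed-cone content of the C3 definition, paralleling P(a)). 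I would isolate this integral-closure fact as the crux, prove it directly from the inequality characterization of $\mathscr{P}^*$, and treat the surrounding reductions to P(b) and P(c) as bookkeeping.
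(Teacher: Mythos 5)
Your proposal follows essentially the same route as the paper's own proof: fix $v(\cdot,\mathbf{b}')\in\tilde{V}$, use B(a) to place each $h_{y'}\big(\cdot,\cdot,v(\cdot,\lambda(y',\mathbf{b}))\big)$ in $\tilde{F}$, use the convex-cone property of $\tilde{F}$ to place the $\sigma$-weighted integral in $\tilde{F}$, then apply B(b) and B(c) to obtain P(b) and P(c), and finish with Proposition \ref{Porteus}. The integral-closure issue you flag as the crux is real but is exactly the point the paper dispatches by fiat (``since $\tilde{F}$ is a space of functions that is a convex cone''), so if anything your write-up is the more careful of the two; it resolves the same way you suggest, via the closedness under pointwise limits built into the C3 definition.
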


The following is a straightforward corollary that shows that P(a), B(a), and B(b) are sufficient for guaranteeing value function structure, absent policy structure.
\begin{corollary}
	Suppose P(a), B(a), and B(b) hold. Then $v^*(\cdot, x) \in \tilde{V}$ for all $\mathbf{b} \in \mathcal{B}$.
	\label{cor: value function structure}
\end{corollary}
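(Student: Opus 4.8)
The plan is to recognize this corollary as the value-function-only specialization of Proposition~\ref{prop: SEP-POMDP inheritance}: since we no longer seek policy structure we may discard condition B(c), and correspondingly we need only establish the structured value-preservation condition P(b), after which Corollary~\ref{cor: Porteus} delivers the conclusion. Concretely, I would show that P(a), B(a), and B(b) together imply that P(a) and P(b) both hold for the SEP-POMDP operator $H$, and then invoke Corollary~\ref{cor: Porteus} directly. Note that P(a) is assumed outright, so the entire content of the argument is the derivation of P(b).

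To establish P(b), fix an arbitrary $\mathbf{b} \in \mathcal{B}$ and suppose $v(\cdot, \mathbf{b}') \in \tilde{V}$ for every $\mathbf{b}' \in \mathcal{B}$. For each observation $y' \in \mathcal{Y}$ the updated belief $\lambda(y', \mathbf{b})$ is itself an element of $\mathcal{B}$, so $v(\cdot, \lambda(y', \mathbf{b})) \in \tilde{V}$; condition B(a) then yields that the section $h_{y'}\big(\cdot, \cdot, v(\cdot, \lambda(y', \mathbf{b}))\big)$ belongs to $\tilde{F}$ as a function on $\mathcal{S} \times \mathcal{A}$. Introduce
\[
	f(s, a) = \int_{y'} \sigma(y' \vert \mathbf{b}) \, h_{y'}\big(s, a, v(\cdot, \lambda(y', \mathbf{b}))\big) \dd{y'},
\]
so that, by the reformulated operator, $Hv(\cdot, \mathbf{b}) = \min_{\delta \in \Delta} f^\delta$ with the minimum taken pointwise over feasible actions.

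The crux of the argument --- and the step I expect to be the main obstacle --- is verifying that $f \in \tilde{F}$. Here $f$ is a $\sigma(\cdot \vert \mathbf{b})$-weighted integral of functions each known to lie in $\tilde{F}$: because $\tilde{F}$ is a convex cone it is closed under finite nonnegative combinations, and because (as a joint extension of a C3 property) it is closed in the topology of pointwise convergence, the passage from such finite combinations to the full integral is legitimate. Making this rigorous amounts to approximating the integral by nonnegative simple-function sums --- each of which lies in $\tilde{F}$ by the convex-cone property --- and taking the pointwise limit, which remains in $\tilde{F}$ by closedness; this is precisely the extra work required beyond the MDP-analog fact that B(a) and B(b) imply P$_{y'}$(b). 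Once $f \in \tilde{F}$ is in hand, condition B(b) gives $\min_{\delta \in \Delta} f^\delta \in \tilde{V}$, that is, $Hv(\cdot, \mathbf{b}) \in \tilde{V}$. Since $\mathbf{b} \in \mathcal{B}$ was arbitrary, this is exactly P(b). Combining P(a) with the now-established P(b), Corollary~\ref{cor: Porteus} guarantees $v^*(\cdot, \mathbf{b}) \in \tilde{V}$ for all $\mathbf{b} \in \mathcal{B}$, completing the argument.
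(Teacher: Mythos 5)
Your proof follows essentially the same route as the paper's: the appendix proof of Proposition \ref{prop: SEP-POMDP inheritance} establishes P(b) exactly as you do --- B(a) gives $h_{y'}\big(\cdot,\cdot,v(\cdot,\lambda(y',\mathbf{b}))\big) \in \tilde{F}$, the convex-cone property of $\tilde{F}$ puts the $\sigma(\cdot\vert\mathbf{b})$-weighted integral in $\tilde{F}$, and B(b) yields $Hv(\cdot,\mathbf{b}) \in \tilde{V}$ --- after which the conclusion follows from Corollary \ref{cor: Porteus}, just as you argue. Your extra care in passing from finite conic combinations to the integral via closedness under pointwise limits is a refinement of a step the paper justifies only by citing the convex-cone property, but it is the same argument in substance.
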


Of course, if the model primitives $p = \{p(s \vert y', s, a)\}$ and $c = \{c(s', y', a)\}$ are in spaces of structured transition probability functions, $\tilde{P}$, and cost functions, $\tilde{C}$, that guarantee that B(a) and B(b) hold, then the SEP-POMDP is structured in its value function by the Corollary \ref{cor: value function structure}.
\begin{corollary}
	Suppose P(a) holds, and that $p \in \tilde{P}$ for all $y' \in \mathcal{Y}$ and $c \in \tilde{C}$ for all $y' \in \mathcal{Y}$ imply that B(a) and B(b) hold. Then $v^*(\cdot, \mathbf{b}) \in \tilde{V}$ for all $\mathbf{b} \in \mathcal{B}$.
	\label{cor: value function structure with model primitives}
\end{corollary}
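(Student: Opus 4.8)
The plan is to reduce this statement directly to Corollary~\ref{cor: value function structure}, whose hypotheses the stated assumptions are engineered to supply. First I would unpack the supposition as applied to a fixed model: its primitives satisfy $p \in \tilde{P}$ and $c \in \tilde{C}$ for every $y' \in \mathcal{Y}$, and by hypothesis this membership is sufficient for B(a) and B(b). Invoking that sufficiency (modus ponens) then yields that B(a) and B(b) both hold for the model at hand.

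Next I would note that the remaining hypothesis of Corollary~\ref{cor: value function structure} is exactly P(a), which is assumed outright. With P(a), B(a), and B(b) all in force, a single application of Corollary~\ref{cor: value function structure} delivers $v^*(\cdot, \mathbf{b}) \in \tilde{V}$ for all $\mathbf{b} \in \mathcal{B}$, which is the desired conclusion.

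There is no substantive obstacle here; the corollary is a repackaging of Corollary~\ref{cor: value function structure} in which the somewhat abstract preservation condition B(a) and minimization condition B(b) are traded for the more operational requirement that the one-period cost and controlled transition kernel lie in prescribed structured classes $\tilde{C}$ and $\tilde{P}$. The single point that warrants care is the quantification over $y'$ in B(a): because $h_{y'}$ is built from the observation-indexed cost $c(\cdot, y', \cdot)$ and kernel $p(\cdot \vert y', \cdot, \cdot)$, the memberships $p \in \tilde{P}$ and $c \in \tilde{C}$ are imposed for all $y' \in \mathcal{Y}$ precisely so that B(a) holds uniformly in $y'$; B(b), being a statement about the pointwise minimization over $\Delta$ alone, carries no such $y'$-dependence. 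Once this uniformity is recorded, the passage to B(a) and B(b), and thence to the conclusion via Corollary~\ref{cor: value function structure}, is immediate.
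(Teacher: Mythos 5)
Your proposal is correct and matches the paper's own (essentially one-line) argument: the structured memberships $p \in \tilde{P}$, $c \in \tilde{C}$ are assumed to guarantee B(a) and B(b), and together with P(a) these are exactly the hypotheses of Corollary~\ref{cor: value function structure}, which yields $v^*(\cdot,\mathbf{b}) \in \tilde{V}$ for all $\mathbf{b} \in \mathcal{B}$. Your added remark on the uniformity in $y'$ of B(a) is a fair point of care but does not change the route.
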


\subsection{Structure on $\mathcal{B}$}

In this subsection, we discuss some known structural properties related to POMDPs, as they pertain to the SEP-POMDP when the spaces $\mathcal{S}$, $\mathcal{Y}$, $\mathcal{M}$, and $\mathcal{A}$ are discrete. The following proposition is due to \cite{Sondik73} and \cite{Sondik78}, in which successive value approximations achieved by applying the Bellman operator, $H$, preserve piecewise linearity and concavity of $v$ with respect to $\mathbf{b}$. Concavity is preserved in the limit. The proof of Proposition \ref{prop: piecewise linear and convex in x} can be found in \cite{Bishop19}.


\begin{proposition}
	The value function $v^*(s, \cdot)$ is concave in $\mathbf{b}$ on $\mathcal{B}$, for all $s \in S$.
	\label{prop: piecewise linear and convex in x}
\end{proposition}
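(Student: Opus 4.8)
The plan is to prove the stronger finite-stage statement that each value-iteration iterate is piecewise linear and concave (PWLC) in $\mathbf{b}$, and then recover concavity of $v^*$ by passing to the limit. Since $\mathcal{S},\mathcal{Y},\mathcal{M},\mathcal{A}$ are discrete, a concave piecewise-linear function $v(s',\cdot)$ can be written as a pointwise minimum of finitely many linear functionals, $v(s',\mathbf{b}) = \min_{k}\sum_{\mu}\alpha^{s'}_k(\mu)\,\mathbf{b}(\mu)$. I would begin by showing that this form is preserved by $H$.

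The crux is the cancellation of the Bayesian normalizer. Using the discrete forms $\sigma(y'\vert\mathbf{b}) = \sum_{\mu,\mu'}\mathbf{b}(\mu)P[y',\mu'\vert\mu]$ and $\lambda(\mu'\vert y',\mathbf{b}) = \sum_\mu \mathbf{b}(\mu)P[y',\mu'\vert\mu]/\sigma(y'\vert\mathbf{b})$, the product
\[
\sigma(y'\vert\mathbf{b})\,v\big(s',\lambda(y',\mathbf{b})\big) = \min_k \sum_\mu \mathbf{b}(\mu)\Big(\sum_{\mu'}\alpha^{s'}_k(\mu')\,P[y',\mu'\vert\mu]\Big)
\]
is again a minimum of finitely many linear functionals of $\mathbf{b}$, hence PWLC; the normalizer $\sigma(y'\vert\mathbf{b})$ has cancelled, and the set where $\sigma(y'\vert\mathbf{b})=0$ is handled by the convention that this product is $0$ there. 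Equivalently, this is the perspective transform of the concave map $v(s',\cdot)$ evaluated at the unnormalized updated belief, composed with the linear map $\mathbf{b}\mapsto\{\sum_\mu\mathbf{b}(\mu)P[y',\mu'\vert\mu]\}_{\mu'}$, which is concavity-preserving.

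I would then assemble the operator. For fixed $a$, expanding $h_{y'}$ gives
\[
\sum_{y'}\sigma(y'\vert\mathbf{b})\,h_{y'}\big(s,a,v(\cdot,\lambda(y',\mathbf{b}))\big) = \sum_{y'}\sigma(y'\vert\mathbf{b})c(s,y',a) + \beta\sum_{y'}\sum_{s'}p(s'\vert y',s,a)\,\sigma(y'\vert\mathbf{b})\,v\big(s',\lambda(y',\mathbf{b})\big),
\]
where the first sum is linear in $\mathbf{b}$ and each summand of the second is PWLC by the previous step. Since PWLC functions form a convex cone closed under nonnegative combinations and finite sums, the bracketed objective is PWLC in $\mathbf{b}$ for each $a$. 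The final step is the minimization $\min_{a\in\mathcal{A}(s)}$: here I would invoke the elementary fact that a pointwise minimum of finitely many concave functions is concave (and of PWL functions is PWL), so $Hv(s,\cdot)$ is PWLC. Starting value iteration from $v_0\equiv 0$ (trivially PWLC) and inducting yields $v_n(s,\cdot)$ PWLC for every $n$.

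To conclude, I would use the contraction property of $H$ established earlier via \cite{Puterman10}, giving $\norm{v^*-v_n}\to 0$ and in particular pointwise convergence $v_n(s,\mathbf{b})\to v^*(s,\mathbf{b})$. Passing the concavity inequality $v_n(s,\theta\mathbf{b}_1+(1-\theta)\mathbf{b}_2)\ge \theta v_n(s,\mathbf{b}_1)+(1-\theta)v_n(s,\mathbf{b}_2)$ to the limit preserves it, so $v^*(s,\cdot)$ is concave for every $s$. I expect the main obstacle to be verifying the cancellation identity cleanly --- in particular, getting the boundary convention at $\sigma(y'\vert\mathbf{b})=0$ and the min-of-concave step stated precisely --- rather than the limiting argument, which is routine. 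Piecewise linearity need not survive the limit, since the number of linear pieces can grow without bound, which is exactly why only concavity is claimed in the statement.
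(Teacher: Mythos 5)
Your proof is correct and follows essentially the same route the paper indicates (and attributes to \cite{Sondik73}, \cite{Sondik78}, with details in \cite{Bishop19}): show by induction that each value-iteration iterate is piecewise linear and concave in $\mathbf{b}$ --- using the cancellation of the normalizer $\sigma(y'\vert\mathbf{b})$ in the Smallwood--Sondik style and closure of concavity under nonnegative sums and pointwise minima --- and then pass to the limit, where only concavity survives. Your handling of the $\sigma(y'\vert\mathbf{b})=0$ boundary case and your remark that piecewise linearity need not persist in the limit are both consistent with the paper's statement.
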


If $v^*$ can be shown to be piecewise linear in $\mathbf{b}$ on $\mathcal{B}$ as well (such as if the optimal policy is finitely transient, as in \cite{Sondik78}), then we have a corollary result. For the standard POMDP model, the belief space $\mathcal{B}$ partitions into a finite number of convex, polyhedral regions that specify an optimal \textit{control} or \textit{action} to take. We note that for the SEP-POMDP, the belief space partitions into a finite number of convex, polyhedral regions that specify an optimal control or action \textit{for each} $s \in \mathcal{S}$. Thus, these non-overlapping regions in $\mathcal{B}$ specify a \textit{partial policy}, \textit{i.e.} functions from the state space $\mathcal{S}$ into the action space $\mathcal{A}$. If Proposition \ref{Porteus} holds, then these regions specify \textit{structured} partial policies.

\begin{corollary}
	Suppose $v^*$ is piecewise linear in $\mathbf{b}$ on $\mathcal{B}$. Then, there exists a partition of $\mathcal{B}$ into a finite number of convex, polyhedral regions $\{\mathcal{B}_j, j = 1, \ldots, n\}$ such that there exists a set of functions from $\mathcal{S}$ into $\mathcal{A}$, $\{\delta^*_j, j = 1, \ldots, n\}$, such that $\pi^*(\cdot, \mathbf{b}) = \delta^*_j$ for all $\mathbf{b} \in \mathcal{B}_j$, $j = 1, \ldots, n$.
	\label{cor: belief space partition}
\end{corollary}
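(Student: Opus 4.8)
The plan is to reduce the statement, for each fixed completely observed state, to the classical Sondik polyhedral-decomposition argument, and then to combine these per-state decompositions over the finite state space $\mathcal{S}$ by taking a common refinement. Throughout I work in the discrete setting, so $\mathcal{M}$ is finite and $\mathcal{B}$ is the probability simplex in $\mathbb{R}^{|\mathcal{M}|}$, itself a polyhedron; by Proposition~\ref{prop: piecewise linear and convex in x} together with the hypothesis, $v^*(s,\cdot)$ is piecewise linear and concave on $\mathcal{B}$ for each $s$, hence expressible as a finite lower envelope $v^*(s,\mathbf{b}) = \min_{\gamma \in \Gamma_s} \langle \gamma, \mathbf{b}\rangle$ of linear functionals (the ``$\alpha$-vectors'').

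First I would fix $s \in \mathcal{S}$ and verify that each per-action cost-to-go $Q_s(a,\mathbf{b}) = \int_{y'} \sigma(y'\mid\mathbf{b}) h_{y'}\big(s,a,v^*(\cdot,\lambda(y',\mathbf{b}))\big)\dd{y'}$ is itself piecewise linear in $\mathbf{b}$. The key computation is that $\sigma(y'\mid\mathbf{b})$ is linear in $\mathbf{b}$ and that the normalizing factor in $\lambda(y',\mathbf{b})$ cancels against it: for each $s''$, the product $\sigma(y'\mid\mathbf{b})\, v^*\big(s'',\lambda(y',\mathbf{b})\big)$ equals $\min_{\gamma\in\Gamma_{s''}}\sum_{\mu}\mathbf{b}(\mu)\sum_{\mu'}\gamma(\mu')P[y',\mu'\mid\mu]$, a lower envelope of linear functionals of $\mathbf{b}$. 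Summing over the finite set $\mathcal{Y}$ and over $s'$ against the nonnegative weights $p(s'\mid y',s,a)$, and adding the linear cost term $\sum_{y'}\sigma(y'\mid\mathbf{b})c(s,y',a)$, shows $Q_s(a,\cdot)$ is piecewise linear concave, with $v^*(s,\cdot) = \min_{a\in\mathcal{A}(s)} Q_s(a,\cdot)$. Tagging each $\alpha$-vector $\gamma \in \Gamma_s$ with an action $a(\gamma)$ that generates it, the region $R^s_\gamma = \{\mathbf{b}\in\mathcal{B} : \langle\gamma,\mathbf{b}\rangle \le \langle\gamma',\mathbf{b}\rangle\ \forall\gamma'\in\Gamma_s\}$ is an intersection of finitely many half-spaces with $\mathcal{B}$, hence a convex polyhedron, and on $R^s_\gamma$ the action $a(\gamma)$ attains the minimum in the optimality equation, so $\pi^*(s,\cdot)$ may be taken constant there. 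Fixing any consistent rule to break ties on shared boundaries makes $\{R^s_\gamma\}_{\gamma\in\Gamma_s}$ a finite partition of $\mathcal{B}$ into convex polyhedral cells on each of which $\pi^*(s,\cdot)$ is a single action.

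Next I would pass from a single state to the whole partial policy by intersecting. Since $\mathcal{S}$ is finite and each per-state partition $\{R^s_\gamma\}$ is finite, the common refinement $\{\mathcal{B}_j\}$, consisting of the nonempty intersections $\bigcap_{s\in\mathcal{S}} R^s_{\gamma_s}$ over all choices of one cell per state, is a finite collection of at most $\prod_{s\in\mathcal{S}} |\Gamma_s|$ sets. Each $\mathcal{B}_j$ is a finite intersection of convex polyhedra, hence convex and polyhedral, and by construction $\pi^*(s,\cdot)$ is constant on $\mathcal{B}_j$ for every $s$ simultaneously. Defining $\delta^*_j(s) := \pi^*(s,\mathbf{b})$ for any $\mathbf{b}\in\mathcal{B}_j$ then yields a well-defined function $\delta^*_j : \mathcal{S}\to\mathcal{A}$ with $\pi^*(\cdot,\mathbf{b}) = \delta^*_j$ for all $\mathbf{b}\in\mathcal{B}_j$, which is precisely the claim.

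The step I expect to require the most care is the per-state decomposition: arguing that the piecewise-linear pieces of $v^*(s,\cdot)$ can be coherently tagged with optimal actions so that the resulting regions are genuinely polyhedral and the optimal action is constant on each. The cancellation of the belief-normalization in $\lambda$, so that $Q_s(a,\cdot)$ inherits piecewise linearity from $v^*$, is the technical crux; once that is in hand, the polyhedrality of the cells and the finiteness of the refinement follow from elementary convex-geometry facts (finite intersections of half-spaces are polyhedra, and finite intersections of convex sets are convex). That the cells of the refinement index partial policies $\delta^*_j:\mathcal{S}\to\mathcal{A}$ rather than single actions is exactly the structural feature distinguishing the SEP-POMDP from the standard POMDP, and it is delivered for free by the finiteness of $\mathcal{S}$ in the refinement step.
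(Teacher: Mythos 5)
Your proof is correct and follows essentially the same route as the paper: the paper justifies this corollary by appealing to the classical Sondik piecewise-linear, polyhedral partition argument applied for each fixed $s \in \mathcal{S}$ (the normalization-cancellation in $\sigma(y'\vert\mathbf{b})\,v^*(\cdot,\lambda(y',\mathbf{b}))$ being the standard mechanism), with the resulting regions specifying partial policies --- which is precisely your per-state $\alpha$-vector decomposition followed by the common refinement over the state space. The only caveat is that your refinement bound $\prod_{s\in\mathcal{S}}\abs{\Gamma_s}$, and indeed the corollary's finiteness claim itself, implicitly require $\mathcal{S}$ (along with $\mathcal{M}$, $\mathcal{Y}$, $\mathcal{A}$) to be finite rather than merely discrete, an assumption the paper's implicit argument shares.
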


These results can be utilized to motivate computational solution procedures. In the appendix, we discuss one way in which the belief space partition into a finite number of polyhedral regions specifying a \textit{structured} MDP analog policy (when the inheritance property of Propostion \ref{prop: SEP-POMDP inheritance} holds), $\delta^*$, can lead to computational efficiencies when utilizing the facet-generating algorithm in \cite{Sondik73}.

\textbf{Comment on additional structural properties of SEP-POMDPs.} For a more thorough compendium of structural properties of SEP-POMDPs --- including extensions of propositions in \cite{Smith02}, the value of information, sufficient conditions for monotone optimal policies with respect to the belief space, and inheritance under a functional description of dynamics --- we refer the reader to \cite{Bishop19}.

\section{Relationship to Supervised Learning}

Thus far, we have assumed that each of the model artifacts --- the cost structure, definitions of the relevant processes, and transition probabilities of (\ref{eq: SEP-POMDP conditioning assumption}) --- are fully specified. In reality, some of these might be more confidently known than others. For example, suppose we are making inventory replenishment decisions for a single product, where $s_t$ is the inventory level, $y_{t+1}$ is the demand that arrives between $t$ and $t+1$, and $a_t$ is the replenishment amount. Suppose that replenishment is immediate and backlogging is permitted. In this system, we may be confident that $s_{t+1} = s_t + a_t - y_{t+1}$ accurately describes the dynamics of the inventory level, \emph{i.e.} that we can specify $P[s_{t+1} \vert s_t, y_{t+1}, a_t]$ from Equation (\ref{eq: SEP-POMDP conditioning assumption}), and that the relevant costs (\emph{e.g.} procurement, holding) are known. We may know that demand for our product is impacted in some way by the state of the market, $\mu_t$, but less certain how to specify the conditional demand and market distribution, $P[y_{t+1}, \mu_{t+1} \vert \mu_t]$. This is the situation that we consider in this section, in which the decision-maker seeks to model demand using the predictive methods from statistics and machine learning, sometimes in combination with ``domain expert" forecasts, based on historical observations of data pertaining to demand ($y_t$) and the state of the market ($\mu_t$). We show in this section how the formulation of the SEP-POMDP can encompass various types of learning models.



In each of these cases, assume that we have historical observations comprising a training dataset, $\mathscr{D} = \{(y^i, x^i ): i = 1, \ldots, N \}$. Here, $y^i$ indicates the $i$-th ``label", or realization of a target random variable (with support $\mathcal{Y}$, which we assume to be in $\mathbb{R}$ without loss of generality), that our machine learning models are principally interested in predicting, based on the realization of some observed auxiliary data vector $x^i$ (with support $\mathcal{X} \subseteq \mathbb{R}^{d_\mathcal{X}}$, dimension $d_{\mathcal{X}}$), and $N$ is the number of data points in our training data. We can choose to build these machine learning models to make predictions, at each time $t$, of $y_t$, using a combination of the (observed) auxiliary data $x_t$, latent (partially observed) variables $u_t$ (with support $\mathcal{U} \subseteq \mathbb{R}^{d_{\mathcal{U}}}$, dimension $d_{\mathcal{U}}$) that can represent either introduced model artifacts useful for describing the data or real characteristics of the data generating process for $(y_t, x_t)$, and additional modeling parameters $\theta_t$ (with support $\Theta \subseteq \mathbb{R}^{d_{\Theta}}$, dimension $d_{\Theta}$) that can be latent or known, time-varying or fixed.

For the SEP-POMDP, and in order to more fully describe its versatility and to better relate it to results in the machine learning literature, we consider the modulation process to be specified by these three types of machine learning model variables or parameters, that is $\mu_t = (x_t, u_t, \theta_t)$. Note that the SEP-POMDP assumption that $\mu_t$ is partially observed is an encompassing generalization for $(x_t, u_t, \theta_t)$ since the associated belief distribution $\mathbf{b}_t$ can simply assign probability 1 to the realization of whichever components are observed by the decision-maker (the completely observable case is a special case of the partially observable case). In the context of the above single product inventory replenishment problem, $x_t$ might represent related market data (\emph{e.g.} housing starts, consumer price index, Google searches for the product), $u_t$ might represent the ``underlying state of the market", and $\theta_t$ might represent model parameter values that are not completely known.  



Recall, above, that a properly instantiated SEP-POMDP requires that we fully specify the following probability distribution:
	\begin{equation}
		P[y', \mu' \vert \mu] = P[y', x', u', \theta' \vert x, u, \theta].
		\label{eq: ML joint probability distribution}
	\end{equation}
In the context we consider here, this conditional joint distribution will be estimated using a statistical or machine learning model (or combination of models) in order to generate an approximate distribution that is ``close" to the true distribution, based on the training data $\mathscr{D}$, which we will denote $P_{\mathscr{D}}[y', x', u', \theta' \vert x, u, \theta]$. There are many different ways in which one might approach modeling this joint distribution, but in the context of the SEP-POMDP each of these fall under the two broad categories of \emph{generative} and \emph{discriminative} (plus, an associated Markov \emph{forecasting} model for auxiliary data) learning models, as in \cite{jebara2012machine}.

\textbf{\emph{Discriminative models, plus forecasting.}} In many cases, specifying a (generative, see below) model for the joint distribution over $(y_{t+1}, x_{t+1})$ can be difficult. In practice, many machine learning tasks are primarily concerned with making predictions about $y$, given some values of the auxiliary data $x$. These models are called \emph{discriminative} learning models. Since the SEP-POMDP is concerned with sequential decision-making environments, we require a full specification of the conditional joint distribution of $(y_{t+1}, x_{t+1})$. However, the modeler might choose to employ one of the many popular discriminative machine learning models in conjunction with a forecasting model for the auxiliary data process $\{x_t: t \geq 0\}$. There are many ways in which Equation (\ref{eq: ML joint probability distribution}) might decompose. One such decomposition is as follows, in which the forecasting model for the auxiliary data process is independent of observations of the $y$-process:
		\begin{equation*}
			P_{\mathscr{D}}[y', x', u', \theta' \vert x, u, \theta] = P_{\mathscr{D}}[y' \vert x', u', \theta', x, u, \theta] \cdot P_{\mathscr{D}}[x', u', \theta' \vert x, u, \theta].
		\end{equation*}
		
\textbf{\emph{Generative models.}} For generative models, the modeler specifies a model of the conditional joint distribution of $(y_{t+1}, x_{t+1})$, given values of the (possibly) latent $(u,\theta)$-process. In this setting, the joint distribution in Equation (\ref{eq: ML joint probability distribution}) decomposes, as follows:
		\begin{equation*}
			P_{\mathscr{D}}[y', x', u', \theta' \vert x, u, \theta] = P_{\mathscr{D}}[y', x' \vert u', \theta', x, u, \theta] \cdot P_{\mathscr{D}}[u', \theta' \vert x, u, \theta].
		\end{equation*}

In the subsequent subsections, we will discuss various learning models in the machine learning and optimization literature that fit within the SEP-POMDP framework. Though not a comprehensive list, the purpose of the discussion is to demonstrate substantial flexibility in incorporating learning models within the SEP-POMDP optimization models.

\subsection{Discriminative Learning, Plus Forecasting}
Many of the most popular supervised learning models, in practice, are aimed at some approximation of the conditional expectation, $\mathbb{E}[y_t \vert x_t = x]$, based on the historical training data, $\mathscr{D}$. At time $t$, predictions for future realizations of the target variable, $\{y_{t'}: t' > t\}$, will thus depend on forecasting future values of the auxiliary data, $\{x_{t'}: t' > t\}$. This is the setting we consider in this subsection, as we discuss how machine learning and forecasting models might be adapted and combined within the SEP-POMDP framework. Unless otherwise specified, in this subsection we will be principally concerned with learning models for specifying the following SEP-POMDP conditional probabilities:
	\begin{equation}
		P_{\mathscr{D}}[y', x', u', \theta' \vert x, u, \theta] = \underbrace{P_{\mathscr{D}}[y' \vert x', u', \theta', x, u, \theta]}_{\text{discriminative learning model}} \cdot \underbrace{P_{\mathscr{D}}[x', u', \theta' \vert x, u, \theta]}_{\text{forecasting}}.
		\label{eq: discriminative learning, plus forecasting conditional probabilities}
	\end{equation}
	
\textbf{\emph{Non-parametric machine learning, plus Markov forecasts.}} We begin by discussing the related work of \cite{BertsimasKallus20} and \cite{BertsimasMcCord19}, who consider the case of stochastic optimization ``with side information" (\cite{BertsimasKallus20} consider the single period case; \cite{BertsimasMcCord19}, the multi-period case). That is, they consider optimization problems in which decisions are made, given (possibly large-scale) auxiliary data that is useful for making predictions about uncertainties in the optimization problem. \cite{BertsimasKallus20} show how to use local, non-parametric learning methods, such as $k$-nearest neighbor ($k$-NN) regression, kernel regression, locally-estimated scatterplot smoothing (LOESS), classification and regression trees (CART), and random forests, trained on $\mathscr{D}$, to generate weight functions $\{w_{N,i}(x): i = 1, \ldots, N \}$ that approximate the following conditional probability for a fixed realization of the auxiliary data, $x^0$: 
	\begin{equation*}
		P[y = y^i \vert x', u', \theta', x = x^0, u, \theta] = P[y' = y^i \vert x = x^0] \approx w_{N,i}(x^0),
	\end{equation*}
where $w_{N,i}(x) \in [0,1]$ and $\sum_{i=1}^N{w_{N,i}(x)} = 1$, for all $x \in \mathcal{X}$. In \cite{BertsimasMcCord19}, they assume a Markov process for the auxiliary data (``side information"), and thus Equation \ref{eq: discriminative learning, plus forecasting conditional probabilities} is approximated as follows:
	\begin{equation}
		P_{\mathscr{D}}[y'=y^i, x'=x^i, u', \theta' \vert x, u, \theta] \approx w_{N,i}(x) \cdot P_{\mathscr{D}}[x' \vert x].
	\end{equation}
Since the model is non-parametric, and the discriminative learning models considered do not contain latent variables, when applying to the SEP-POMDP, the belief function $\mathbf{b}$ (defined as a probability distribution over $\mathcal{X}$) assigns probability 1 to the realization of $x_t$ at each decision epoch, $t$. The optimality equation in this case becomes:
	\begin{equation*}
		Hv(s, x) = \min_{a \in \mathcal{A}(s)} \left\{ \sum_{i=1}^{N} w_{N,i}(x) \left[c(s,y^i,a) + \beta \int_{x'} \int_{s'} P_{\mathscr{d}}[x' \vert x] p(s' \vert y^i, s, a) v(s', x') \dd{s'} \dd{x'}  \right]  \right\}.
	\end{equation*}
Note that since this is a special case of the SEP-POMDP --- that is, it satisfies the SEP-POMDP conditioning assumptions (\ref{eq: SEP-POMDP conditioning assumption}) --- the structural properties of the SEP-POMDP hold, including the inheritance property. Further, the discriminative learning models considered in \cite{BertsimasKallus20} and \cite{BertsimasMcCord19} (k-NN regression, kernel regression, LOESS, CART, random forests) are applicable to the SEP-POMDP, so long as they are accompanied by Markov forecasting model(s) for the auxiliary data process.

\textbf{\emph{Markov forecasting models.}} Given the importance of distributional forecasting for specifying the conditional probability (\ref{eq: discriminative learning, plus forecasting conditional probabilities}), it is worth considering the flexibility of the Markovian modeling assumption on the auxiliary data forecasting model $P_{\mathscr{D}}[x', u', \theta' \vert x, u, \theta]$. Notably, two important and broad classes of models that are popular in practice satisfy the Markovian assumption: Brownian motion-related stochastic processes (standard/geometric Brownian motion, Brownian motion with drift, Ornstein-Uhlenbeck processes, L\'evy processes, and multivariate extensions of these) and autoregressive time series models (auto-regression moving average, vector auto-regression). We include details pertaining to these in Appendix B.

Of course, other more direct Markov forecasting models for the auxiliary data also satisfy the forecasting conditioning assumption of (\ref{eq: discriminative learning, plus forecasting conditional probabilities}) --- for example, discrete-time Markov chains (DTMCs) as a model for $\{x_t: t \geq 0\}$ and, as is popular in practice, deterministic expert forecasts, such as forecasts for macroeconomic data published regularly by macroeconomists.

Finally, we note that, in the absence of auxiliary data, $\{x_t\}$, each of these forecasting models is directly applicable to the observation process, as well, by assuming another constructed ``auxiliary data" process, $\{\tilde{x}_t: t \geq 0\}$, such that $\tilde{x}_t$ is a finite history of the observation process, \emph{i.e.} $\exists \tau$ such that $\tilde{x}_t = [y_{t}, \ldots, y_{t-\tau}]$, with respect to which $(y_{t+1}, u_{t+1}, \theta_{t+1})$ satisfies the Markov property. Under this assumption:
	\begin{equation*}
		\begin{split}
			P_{\mathscr{D}}[y_{t+1}, \tilde{x}_{t+1}, u_{t+1}, \theta_{t+1} \vert \tilde{x}_t, u_t, \theta_t] &= P_{\mathscr{D}}[y_{t+1}, y_t, \ldots, y_{t-\tau+1}, u_{t+1}, \theta_{t+1} \vert y_{t}, \ldots, y_{t-\tau}, u_t, \theta_t] \\
			&= P_{\mathscr{D}}[y_{t+1}, u_{t+1}, \theta_{t+1} \vert y_{t}, y_{t-1}, \ldots, y_{t-\tau}, u_t, \theta_t]\\
			 &= P_{\mathscr{D}}[\tilde{x}_{t+1}, u_{t+1}, \theta_{t+1} \vert \tilde{x}_t, u_t, \theta_t].
		\end{split}
	\end{equation*}

\textbf{\emph{Other discriminative learning models.}} In addition to the discriminative learning models, above, other statistical learning methods fit within our framework. We will present the switching regression model of \cite{christiansen2020switching}, as an encompassing generalization of the Bayesian linear regression. For this time-dependent switching regression, there are assumed to be various ``regimes" (which we model with latent variable, $u_t$) under which the relationship of the observation $y_t$ to the auxiliary data $x_t$ is assumed to be captured by a different linear regression under each ``regime". These linear regressions are defined by the fixed parameters $\theta = \{ (\beta^0_u, \beta_u, \sigma_u): \forall u \in \mathcal{U}\}$, where $\beta^0_u$ is the scalar intercept, $\beta_u$ is the $d_{\mathcal{X}}$-dimensional vector of regression coefficients, and $\sigma_u$ specifies the standard deviation of the i.i.d. normally-distributed errors $\{ \varepsilon_{ut} \}$, that are assumed to be independent of the auxiliary data process. The time-dependence is assumed to captured by the latent variables, $\{u_t: t \geq 0\}$, which are assumed to followed a DTMC with transition probability distributions $\{ P_{\mathcal{U}}[\cdot \vert u]: u \in \mathcal{U} \}$. The auxiliary data are assumed to arise from i.i.d. draws from an unspecified probability distribution over $\mathcal{X}$, $P_{\mathcal{X}}$, and is independent of the $y$- and $u$-processes. Fully specified, the switching regression model is as follows:
	\begin{equation*}
		\begin{split}
			y_t = \sum_{u \in \mathcal{U}}(\beta^0_{u} + \beta_u \cdot x_{t} + \varepsilon_{ut}) \cdot \mathbf{1}\{u_t = u\} \\ 
			\varepsilon_{ut} \stackrel{\text{i.i.d.}}{\sim} \mathcal{N}(0, \sigma_u^2), \quad  u_t \sim P_{\mathcal{U}}[\cdot \vert u_{t-1}], \quad x_t \stackrel{\text{i.i.d.}}{\sim} P_{\mathcal{X}}. 
		\end{split}
	\end{equation*}
We can relate this to the discriminative learning model conditional probability condition of (\ref{eq: discriminative learning, plus forecasting conditional probabilities}):
	\begin{equation*}
	\begin{split}
			\underbrace{P_{\mathscr{D}}[y' \vert x', u', \theta', x, u, \theta]}_{\text{discriminative learning model}} &= P_{\mathscr{D}}[y' \vert x', u', \theta] \\ 
			&= \frac{1}{\sigma_u \sqrt{2 \pi}} \exp \left\{ -\frac{1}{2} \left( \frac{y' - \beta^0_{u'} - \beta_{u'} \cdot x'}{\sigma_{u'}} \right)^2 \right\} \\
			\underbrace{P_{\mathscr{D}}[x', u', \theta' \vert x, u, \theta]}_{\text{forecasting}} &= P_{\mathcal{U}}[u' \vert u] P_{\mathcal{X}}[x' \vert x]
	\end{split}
	\end{equation*}

Note that if there is assumed to be only one, static latent state, then the above model reduces to the standard Bayesian linear regression (when suitably equipped with prior distributions on the model parameters), the theory and analysis of which is well-documented in the literature (\cite{gelman2004bayesian}, \cite{west2006bayesian}).

\textbf{\emph{Discriminative learning models for specifying parametric distributions.}} We will now turn to another approach involving discriminative learning (for  the $y$-process), plus forecasting (for the $x$-process), with different underlying conditioning assumptions to Equation (\ref{eq: discriminative learning, plus forecasting conditional probabilities}), but nevertheless satisfying the SEP-POMDP conditioning assumptions in Equation (\ref{eq: SEP-POMDP conditioning assumption}). In this approach, the conditional probability for the $y$-process is specified by applying discriminative learning methods to estimating the parameters of a parametric probability distribution. Specifically, let us consider the probabilistic forecasting model and context of \cite{salinas2020deepar}, called ``DeepAR".

Suppose $\{y_t: t \geq 0\}$ is a vector-valued stochastic process of (possibly) high dimension, with components $y_{i,t}$. The auxiliary data process is ``assumed to be known for all time periods" (\cite{salinas2020deepar}) --- that is, at time $t$, we are assumed to have access to deterministic forecasts $\{\tilde{x}_{t'}: t' > t\}$ that can be global or associated with components of the $y$-process. For each $t$ and component $i$, $y_{i,t}$ is assumed to be drawn from a parametric distribution with likelihood function $P_y [\cdot \vert \tilde{\theta}_{i,t}]$, where the parameters specifying this distribution, $\tilde{\theta}_{i,t}$ (component $i$ of parameter vector $\tilde{\theta}_t$), are assumed to be functions of the outputs of a recurrent neural network (RNN) pertaining to component $i$ at time $t$. For instance, for real-valued $y_{i,t}$, this might be a Gaussian distribution, where the mean and standard deviation parameters are determined by the RNN output. We denote these RNN outputs, for each time $t$ and component $i$ as $u_{i,t}$, and the function specifying this relationship as $f_{\theta}$, which may include global parameters associated with the RNN, $\theta_h$ (and, thus, the overall parameter vector, $\theta_t = (\tilde{\theta}_t, \theta_h)$, consists of fixed and time-varying components): $\tilde{\theta}_{i,t} = f_{\theta}(u_{i,t}, \theta_h)$.

The outputs, $u_{i,t}$, are modeled to be based on a RNN, a nonlinear function that we denote by $h$ and parametrized by $\theta_h$, taking as input the prior output, $u_{i,t-1}$, as well as the latest realization of the $i$-th component of the target variable, $y_{i,t}$, and the associated auxiliary data, $x_{i,t}$:
	\begin{equation*}
		u_{i,t} = h \left( u_{i,t-1}, y_{i,t}, x_{i,t}, \theta_h \right).
	\end{equation*}
In this model, the SEP-POMDP conditioning assumption in Equation (\ref{eq: SEP-POMDP conditioning assumption}) decomposes as follows:
	\begin{equation*}
		\begin{split}
			P_{\mathscr{D}}[&y_{t+1}, x_{t+1}, u_{t+1}, \theta_{t+1} \vert x_t, u_t, \theta_t] \\
			&=\prod_{i=1}^{d_{\mathcal{Y}}} P_y \big[ y_{i, t+1} \vert \tilde{\theta}_{i,t} \big] \cdot \mathbf{1}\big\{ \tilde{\theta}_{i,t+1} = f_{\theta} (u_{i,t+1}, \theta_h) \big\} \cdot \mathbf{1}\big\{ u_{i,t+1} = h \left( u_{i,t}, y_{i,t+1}, x_{i,t}, \theta_h \right) \big\} \cdot \mathbf{1}\{ x_{t+1} = \tilde{x}_{t+1} \}.
		\end{split}
	\end{equation*}
Note that, since we are assuming that the auxiliary data and forecasts are known, the auxiliary data forecasts are a special case of the Markov forecasting assumption described, above.

\subsection{Generative Learning}
Recall that the SEP-POMDP requires a full specification of the joint conditional probability in (\ref{eq: SEP-POMDP conditioning assumption}). Rather than specifying this distribution by decomposing it into parts and building various discriminative and forecasting learning models for these parts (as in the prior subsection), we might instead choose to model the joint distribution directly. We discuss two broad classes of these \emph{generative} learning models in this subsection.

\textbf{\emph{Hidden Markov Models.}} Hidden Markov models (HMMs) are a widely used and flexible generative learning model that has found applications in domains ranging from computational biology (\cite{eddy2004hidden}) to speech pattern recognition (\cite{rabiner1989tutorial}) to demand modeling in inventory systems (\cite{malladi2020dynamic}).

In the simplest formulation, HMMs are characterized by two discrete conditional probability distributions --- the Markov transition probabilities of the ``hidden" (latent) state process $\{u_t: t \geq 0 \}$, $\{P_{\mathcal{U}}[u' \vert u]: u', u \in \mathcal{U} \}$, and the probability distribution for the emissions $(y_t, x_t)$, $\{ P_{(y,x)} [y, x \vert u]: y \in \mathcal{Y}, x \in \mathcal{X}, u \in \mathcal{U} \}$. Thus, the SEP-POMDP conditioning assumptions are straightforwardly satisfied:
	\begin{equation*}
		P_{\mathscr{D}}[y', x', u', \theta' \vert x, u, \theta] = P[y', x' \vert u'] P[u' \vert u].
	\end{equation*}
This HMM formulation is extensible, for example to permit multivariate Gaussian emission distributions with parameters, $\theta_G$, specifying the mean and covariance structure:
	\begin{equation*}
	P_{\mathscr{D}}[y', x', u', \theta' \vert x, u, \theta] = P[y', x' \vert u', \theta_G] P[u' \vert u].
	\end{equation*}

\textbf{\emph{Bayesian networks.}} Another popular generative learning model is the \emph{Bayesian network}, which is a representation of joint probability distributions (often high-dimensional) using directed acyclic graphs in which edges represent local conditional dependencies (\cite{bishop2006pattern}). This generality of Bayesian networks as models of joint probability distributions, when applied to the joint distribution of $(y_t, x_t)$ in the SEP-POMDP, make them an encompassing generalization of the various modeling combinations that we have discussed in this section, above.

\textbf{\emph{On training the machine learning models and Bayesian updating.}} It might be clarifying, at this point, to discuss options regarding implementation of these machine learning models within our SEP-POMDP optimization model. In all cases, before we seek to solve our optimization problem, we first train out machine learning model(s) on the training dataset, $\mathscr{D}$, which gives us our joint distribution, $P_{\mathscr{D}}$. Once we proceed to solving our optimization problem, we may choose a variety of implementation methods.
	\begin{enumerate}
		\item \emph{Scoring the machine learning model.} In this option, we train the model before optimizing, affix the model parameters, $\hat{\theta}_{\mathscr{D}}$, and ``score the model" (as data science practitioners would say) --- that is, we do not re-estimate model parameters based on new observations once we have begun optimizing. The Bayesian inference mechanism, $\lambda$, is applied only for inferring latent variables, $u_t$, and \emph{not} model parameters.
		
		\item \emph{Bayesian model updating.} For certain types of statistical learning models, we may permit model re-training based on observed realizations of $(y_t, x_t)$ by including $\theta_t$ as a latent variable in the model and allowing Bayesian updating of the parameter(s) via $\lambda$. For example, in the case of a ``discriminative, plus forecasting" mode with Bayesian linear regression, we might permit posterior updates of the regression coefficients via $\lambda$.
		
		\item \emph{Online model updating.} Some machine learning models are not naturally suited to Bayesian model updating. For these types of models re-training based on observed realizations of $(y_t, x_t)$ --- updating, at time $t$, $P_{\mathscr{D}}$ based on $\mathscr{D} \cup \{(y_\tau, x_\tau): \tau < t\}$ --- must occur in the form of an iterative process of training the machine learning model and solving the SEP-POMDP.
	\end{enumerate}

\section{Applications}
In this section, we give some real-world examples of decision-making problems that fit within our SEP-POMDP framework --- following the examples of \cite{Treharne02} for inventory control, \cite{Sandikci13} for liver transplantation decisions, and \cite{Zhou09} for financial portfolio optimization. Additionally, we will discuss \cite{JiangPowell15}, as an example of how the inheritance property might usefully facilitate extensions of computational solution procedures and applications for MDPs to SEP-POMDPs.

\textbf{\textit{Inventory.}} Consider the inventory management context of \cite{Treharne02}, in which the decision-maker is a plant manager in charge of making regular inventory procurement decisions, $a_t$, in the face of economic uncertainty. At each procurement epoch, $t$, we know that our current inventory level is $s_t$. Suppose that we model that there is a state of the economy, $\mu_t$, for which we receive signals at each epoch through demand, $y_{t+1}$, that evolves independently of our procurement decisions --- in other words, inventory dynamics can be described by the conditional probability $P[s_{t+1} \vert y_{t+1}, s_t, a_t]$, and the demand and economic dynamics can be described as $P[\mu_{t+1}, y_{t+1} \vert \mu_t]$. Under this scenario, and suitable cost structures (e.g. the standard Newsvendor costs), \cite{Treharne02} prove that a non-stationary base stock policy, for which the base stock level at each epoch depends on a belief distribution over possible economic states, is optimal --- an inheritance result we could expect from Proposition \ref{prop: SEP-POMDP inheritance}.

\textbf{\textit{Liver Transplants.}} Now, consider the context of \cite{Sandikci13}, in which the decision-maker is an end-stage liver disease patient trying to optimize his or her decision to accept or reject offered potential liver transplants. The quality of the liver depends on the patient's unobserved ranking, $\mu_t$, on the United Network for Organ Sharing (UNOS) liver transplant list. At each decision epoch, $t$, the patient makes their decision, $a_t$, to accept or reject the offered liver on the basis of their known current health status, $h_t$, and the history of observed liver qualities, $\{l_t\}$, and published transplant list ranges on the UNOS website, $\{\omega_t\}$. The completely observed state component in this problem is $s_t = (h_t, l_t)$, the known current health status and liver quality. Observations of the true ranking on the UNOS transplant list are through the offered liver quality and published transplant list ranges, and thus can be described by the conditional probability $P[y_{t+1}, \mu_{t+1} \vert \mu_t]$, where $y_{t} = \{l_t, \omega_t\}$. In \cite{Sandikci13}, structural properties of an optimal policy are proven, such as the optimality of a control limit policy, which we could expect from Proposition \ref{prop: SEP-POMDP inheritance}.

\textbf{\textit{Financial Portfolio Optimization.}} Now consider, as in \cite{Zhou09}, that the decision-maker is seeking to optimize the value of his or her investment portfolio over a finite time period $[0, T]$ and under stochastic volatility conditions. For simplicity, assume that the decision-maker is managing a portfolio containing a single riskless asset with rate of return, $r$, and buy/sell decisions, $\{a_t: t \geq 0 \}$, are made at regular ``clock time" intervals of length $\varepsilon$ (that is, the clock time between each decision epoch $t$ and $t+1$ is $\varepsilon$). The model in \cite{Zhou09} considers that the asset price, $y_t$, evolves in continuous time according to geometric Brownian motion, the dynamics of which are governed by the following stochastic difference equation:
	\begin{equation*}
		y_{t+1} = x_t \text{exp} \left\{ \left( r -  \frac{u_{t+1}^2}{2} \right) \varepsilon + u_t \sqrt{\varepsilon} W^y_t \right\},
	\end{equation*}
where $u_t$ is the latent volatility at time $t$, $\{W^y_t: t \geq 0\}$ are i.i.d. Gaussian random variables, and $x_t = y_t$. The latent volatility process is assumed to be a mean-reverting process (with mean version parameter $\theta_{\text{mean}}$, mean reversion value $\theta_0$ and noise parameter $\theta_{\text{noise}}$), the dynamics of which can be approximated by:
	\begin{equation*}
		u_{t+1} = u_t + \theta_{\text{mean}} (\theta_0 - u_t) \varepsilon +  \theta_{\text{noise}} \sqrt{\varepsilon} W^u_t,
	\end{equation*}
$\{W^u_t: t \geq 0\}$ are i.i.d. Gaussian random variables independent of $\{W^y_t: t \geq 0\}$. Finally, the state, $s_t$, of the SEP-POMDP is the value of the portfolio at time $t$:
	\begin{equation*}
		\tilde{s}_{t+1} = (\tilde{s}_t - a_t x_t) e^{r \varepsilon} + a_t (y_{t+1} - x_t),
	\end{equation*}
with the objective being to maximize the expected value of $\tilde{s}_T$. For our purposes here, we consider $x_t$ to be represented as a completely observed component of the modulation process (as in Section 5), and also as a component of the state space, $s_t = (\tilde{s}_t, x_t)$. Note that the dynamics of this model satisfy the SEP-POMDP conditioning assumption in Equation (\ref{eq: SEP-POMDP conditioning assumption}).

\textbf{\textit{Monotone Approximate Dynamic Programming.}} Finally, we consider the MDP setting of \cite{JiangPowell15}, in which the authors demonstrate convergence of an approximate dynamic programming algorithm for solving MDPs in which the value functions are provably monotone on the state space $\mathcal{S}$. Incorporating knowledge of the monotone value function structure is demonstrated to substantially improve the computational tractability of the MDP models of selected applications in regenerative optimal stopping, energy storage and allocation, and glycemic control for diabetes. Each of these applications are shown to have monotone optimal value functions under conditions presented in Proposition 1 of \cite{JiangPowell15}. In Appendix B, we show that these conditions are sufficient for the SEP-POMDP inheritance of this monotone value function structure under Corollary \ref{cor: value function structure}. 

What is the significance of this inheritance? Each of the applications considered in \cite{JiangPowell15} satisfy these conditions, and thus, there exist SEP-POMDP extensions of these models that preserve monotone optimal value functions. An important extension, in light of Section 5 and discussed in Appendix B, is in building statistical learning models for explaining the stochasticity in state dynamics present in each of these applications, based on auxiliary data. For example, in their energy storage and application example, the decision-maker is seeking to maximize revenues while producing and transferring energy across the energy storage network, as well as purchasing energy from the spot market. These decisions are inextricably linked to the uncertain energy demand on the system. A SEP-POMDP formulation of the problem might include a statistical learning model for \emph{predicting} demand based on seasonal patterns, weather data, Google search data, energy prices in the market, \emph{etc.} We are guaranteed by the inheritance property, that including such a predictive demand model would preserve monotonicity, and thus the methods of \cite{JiangPowell15}, and their attendant computational benefits, for determining an optimal policy are still applicable. More broadly, this is but one example of a set of conditions guaranteeing monotone optimal value functions for applications of MDPs. For other conditions, and resulting applications, a similar connection to the monotone approximate dynamic programming method of \cite{JiangPowell15} might possibly be established.

\section{Computational Example}
There are many different approaches we might take to solving the SEP-POMDP, including specialized approaches that utilize the structural properties we have discussed: notably inheritance and separable learning. We discuss one approach based on simulating belief trajectories, that we then combine with inheritance in solving an inventory problem with time-delayed replenishment. We discuss other computational methods, including exact methods in which we discuss the computational benefits that might be gained by exploiting the relative tractability of the MDP analogs compared to the generalized POMDP, approximate methods based on information relaxation, and heuristics in Appendix E.

We now give an example of how a modeler might combine various structural properties of the SEP-POMDP to generate ``good" policies. There are many ways (and it present an interesting direction for future research) in which specialized solution procedures for the SEP-POMDP could be developed, so this is example is but one of many and its inclusion is meant for illustrative purposes, as a concrete example of how inheritance and separability can be used in a computational solution procedure. This example pertains to inventory management, and it constructs ``good" policies in a solution procedure that: (1) utilizes a belief trajectory simulation method, as in Appendix E, (2) constructs partitions of the belief space, $\mathcal{B}$, using support vector machines, and (3) incorporates a generative learning model for demand, as in Section 5.2. The discussion in this section is based on \cite{Bishop19} chapter 3. We keep the discussion necessarily brief, and refer the reader there for a more detailed presentation, including additional results and a more extensive computational study.

\emph{\textbf{Formulation.}} Consider that the decision-maker is making inventory replenishment decisions for a single product over time, in which replenishment decisions made at decision epoch $t$ are realized at decision epoch $t+\tau$ (modeling, \emph{e.g.}, procurement procurement delays). We model this as a SEP-POMDP with the following constituent processes:

\begin{itemize}
	\item \vspace{-0.3cm} $\{s_t: t = 0, 1, \ldots\}$ is defined to be the \textit{inventory level process}, where $s_t$ is the inventory level at the decision epoch $t$ prior to satisfying demand and being replenished.
	
	\item \vspace{-0.3cm} $\{y_t: t = 1, 2, \ldots\}$ is defined to be the \textit{demand process}, where $d_t$ is the demand that becomes known just before decision epoch $t$. The support for the demand process is assumed to be finite, $|\mathcal{Y}| < \infty$.
	
	\item \vspace{-0.3cm} $\{a_t: t = 0, 1, \ldots\}$ is the \textit{replenishment process}, where $a_t$ is the replenishment decision made at decision epoch $t$.
	
	\item \vspace{-0.3cm} $\{x_t: t = 1, 2, \ldots\}$ is the \textit{additional observation data (AOD) process}, where $x_t$ represents data that becomes known just before epoch $t$ from sources in addition to demand that might be useful in more accurately forecasting demand. The set of all possible observations is $\mathcal{X}$ and is assumed to be finite. We assume that $\{x_t: t \geq 1\}$ is completely observed, as in Section 5.
\end{itemize}
\vspace{-0.3cm} In this SEP-POMDP, we will train a (generative) hidden Markov model for the joint demand and AOD processes, $\{(y_t, x_t): t \geq 0 \}$, with latent state process $\{u_t: t \geq 0 \}$, as a model for the following SEP-POMDP conditional probability:
	\begin{equation*}
		P[y_{t+1}, \mu_{t+1} \vert \mu_t] = P[y_{t+1}, x_{t+1} \vert u_{t+1}] P[u_{t+1} \vert u_t].
	\end{equation*}

The costs at time $t$ will be accrued upon realization of the inventory order, according to the familiar Newsvendor cost function: $c a_t + h(s_t + a_{t-\tau} - y_{t+1})^+ + p (y_{t+1} - s_t - a_{t-\tau})^+$, where $(b)^+ = \max(0, b)$. The per-unit holding cost is $h$, the per-unit purchase cost is $c$, and $p$ is the per-unit underage cost. Further, we assume that the inventory, demand, and replenishment processes are related through the stochastic difference equation $s_{t+1} = s_t + a_{t - \tau} - y_{t+1}$, which assumes backlogging is allowed, where $\tau$ is the replenishment delay. This equation can be described as a conditional probability $P[s_{t+1} \vert s_t, y_{t+1}, a_{t - \tau}]$. This conditional probability is atypical since the inventory dynamics depends on lagged decisions, rather than the actions at the current decision epoch. In the formulation, below, we re-frame the optimality equation as a SEP-POMDP with \emph{inventory position} as state.

In this formulation, the decision-maker at epoch $t$ chooses the total amount of inventory possessed through the interval $[t, t+\tau]$, $\tilde{a}_t \triangleq s_t + \sum_{j=1}^{\tau}a_{t-j} + a_t$ (note that $s_{t+\tau} = \tilde{a}_t - a_t - \sum_{j=1}^{\tau}y_{t+j}$). If we let $\tilde{s}_t = \tilde{a}_t - a_t$ be the \textit{inventory position} through interval $[t, t + \tau]$ before ordering, then we have that $\tilde{s}_{t+1} = \tilde{s}_t + a_t - y_{t+1}$, which is familiar as the inventory difference equation under backlogging. Additionally, we can project out purchase costs in the resulting optimality equation is $v = \tilde{H}v$, where $\tilde{H}$ is defined to be:
	\begin{equation}
		\begin{split}
			\tilde{H} v(\tilde{s}, \mathbf{b}) &= \min_{\tilde{a} \geq \tilde{s}} \left\{\mathbb{E} \left[  \tilde{h} \left(\tilde{a} - \sum_{j=1}^{\tau} y_j \right)^+ + \tilde{p} \left(\sum_{j=1}^{\tau} y_j  - \tilde{a}\right)^+ \vert \mathbf{b} \right] + \beta  \sum_{y', x'} \sigma(y', x' \vert \mathbf{b}) v \big(\tilde{a} - y', \lambda(y', x', \mathbf{b}) \big) \right\}, 
		\end{split}
		\label{eq: opt d}
	\end{equation}
and where $\tilde{h} = \beta^\tau h + c$ and $\tilde{p} = \beta^\tau p - c$. With a little abuse of notation, we use $\sum_{j=1}^\tau y_j$ to denote the (random variable) sum over the next $\tau$ realizations of the demand process, \emph{i.e.} at decision epoch $t$, the sum over $y_{t+1}, y_{t+2}, \ldots, y_{t+\tau}$. The distributions $\sigma$, $\lambda$ are defined as in Section 3. For further details regarding this formulation, we refer the reader to \cite{Bishop19}.

For canonical single-product inventory problems modeled as MDPs, base stock policies are well-known to be optimal. Proposition \ref{prop: base stock optimality} uses the inheritance property of SEP-POMDPs to prove that a base stock policy is optimal for this problem setting under a HMM learning model for demand, with base stock levels, $\{a^*(\mathbf{b}): \mathbf{b} \in \mathcal{B}\}$, defined as the smallest (and hence unique) myopic minimizer such that:
	\begin{equation}
		a^*(\mathbf{b}) \in \argmin_{\tilde{a}} \left\{ \mathbb{E} \left[  \tilde{h} \left(\tilde{a} - \sum_{j=1}^{\tau} y_j \right)^+ + \tilde{p} \left(\sum_{j=1}^{\tau} y_j  - \tilde{a}\right)^+ \vert \mathbf{b} \right] \right\}. 
		\label{eq: base stock policy definition}
	\end{equation}

\begin{proposition}
	Suppose $a^*(\mathbf{b}) - y' \leq a^* \big( \lambda(y', x', \mathbf{b}) \big)$ for all $y', x', \mathbf{b}$. Then the $\tau$-lookahead policy, $\pi(\tilde{s}, \mathbf{b}) = \max\{ a^*(\mathbf{b}) - \tilde{s}, 0 \}$ for all $\tilde{s}, \mathbf{b}$ is optimal.
	\label{prop: base stock optimality}
\end{proposition}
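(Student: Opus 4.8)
The plan is to exploit the two features built into this reformulated model: the per-period cost has been arranged, through the inventory-position change of variables and the projection of purchase costs into $\tilde h = \beta^\tau h + c$ and $\tilde p = \beta^\tau p - c$, to depend on the decision only through the order-up-to level $\tilde a$; and the belief trajectory $\{\mathbf b_t\}$ is control-invariant by separability. Write $G(\tilde a, \mathbf b) = \mathbb{E}[\tilde h(\tilde a - \sum_{j=1}^\tau y_j)^+ + \tilde p(\sum_{j=1}^\tau y_j - \tilde a)^+ \mid \mathbf b]$ for the bracketed myopic cost in (\ref{eq: opt d}). Since $(\cdot)^+$ is convex and $\tilde h, \tilde p > 0$, $G(\cdot, \mathbf b)$ is convex for each $\mathbf b$, and $a^*(\mathbf b)$ from (\ref{eq: base stock policy definition}) is its smallest minimizer; this is exactly the newsvendor/base-stock structure that Proposition~\ref{prop: SEP-POMDP inheritance} guarantees is inherited from the MDP analog.

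First I would unroll $v = \tilde H v$ to express the expected total discounted cost of an arbitrary feasible policy $\pi$, generating order-up-to levels $\tilde a_t^\pi$ with $\tilde a_t^\pi \ge \tilde s_t^\pi = \tilde a_{t-1}^\pi - y_t$, as $\mathbb{E}[\sum_{t\ge 0}\beta^t G(\tilde a_t^\pi, \mathbf b_t)]$, since no cost beyond $G$ remains after projection. The structural point I would stress is that $\mathbf b_{t+1} = \lambda(y_{t+1}, x_{t+1}, \mathbf b_t)$ evolves independently of the actions, so along any fixed realization of $\{(y_t, x_t)\}$ the belief path, and hence the entire family $\{G(\cdot, \mathbf b_t)\}$, is identical for every policy. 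This reduces optimality to a pathwise, period-by-period comparison.

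The core step is a pathwise induction showing that, on each realization, the base-stock policy $B$ (with $\tilde a_t^B = \max\{a^*(\mathbf b_t), \tilde s_t^B\}$) satisfies $G(\tilde a_t^B, \mathbf b_t) \le G(\tilde a_t^\pi, \mathbf b_t)$ for every $t$ and every feasible $\pi$. I would split on whether the ordering constraint is slack. When $\tilde s_t^B \le a^*(\mathbf b_t)$ the base-stock level equals the unconstrained minimizer $a^*(\mathbf b_t)$, so $G(\tilde a_t^B, \mathbf b_t) = \min_{\tilde a} G(\tilde a, \mathbf b_t) \le G(\tilde a_t^\pi, \mathbf b_t)$ trivially; moreover the hypothesis $a^*(\mathbf b) - y' \le a^*(\lambda(y',x',\mathbf b))$ gives $\tilde s_{t+1}^B = a^*(\mathbf b_t) - y_{t+1} \le a^*(\mathbf b_{t+1})$, so once the constraint goes slack it remains slack and $B$ attains the pointwise minimum in every later period. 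In the complementary overstock transient ($\tilde s_t^B > a^*(\mathbf b_t)$, possible only because $\tilde s_0$ is large), I would carry the invariant $a^*(\mathbf b_t) \le \tilde a_t^B \le \tilde a_t^\pi$: the lower bound holds because $B$ never orders below $a^*$, and the upper bound propagates since $\tilde s_t^B = \tilde a_{t-1}^B - y_t \le \tilde a_{t-1}^\pi - y_t \le \tilde a_t^\pi$ with $\tilde a_t^B = \tilde s_t^B$. With both levels on the increasing branch of the convex $G(\cdot, \mathbf b_t)$, convexity yields $G(\tilde a_t^B, \mathbf b_t) \le G(\tilde a_t^\pi, \mathbf b_t)$. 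Summing these pathwise inequalities against $\beta^t$ and taking expectations shows $B$ costs no more than any feasible $\pi$; since $B$ is itself feasible, it is optimal and attains $v^*$.

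I expect the main obstacle to be the overstock transient rather than the slack regime: preserving the invariant $a^*(\mathbf b_t) \le \tilde a_t^B \le \tilde a_t^\pi$ only until the first period the constraint goes slack, and handling every subsequent period by the pointwise-minimum argument instead, is essential because an arbitrary competitor $\pi$ may dip below $a^*(\mathbf b_t)$ after $B$ has reset, breaking the upper bound. Two minor points to pin down en route are that $\tilde p = \beta^\tau p - c > 0$, so $G$ is genuinely convex with an interior minimizer, and that boundedness of the costs makes the infinite discounted sums converge and legitimizes the pathwise-to-expectation passage. An alternative I would keep in reserve is a direct value-iteration induction establishing convexity of $v_n(\cdot, \mathbf b)$ in $\tilde s$ together with $B$ being $v_n$-improving, but the pathwise argument is cleaner and makes the role of separability transparent.
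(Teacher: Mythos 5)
Your proof is correct, but it takes a genuinely different route from the paper's. The paper proves Proposition \ref{prop: base stock optimality} by reduction to a general myopic-optimality result (Proposition \ref{Myopic} in the appendix), which is itself inherited from the MDPs of \cite{Sobel81} through the inheritance machinery of Proposition \ref{prop: SEP-POMDP inheritance}: one verifies that the reformulated cost is \emph{separable} (it depends on the decision only through $\tilde{a}$ and the upcoming demands, not on $\tilde{s}$), that the inventory-position transition $\tilde{s}_{t+1} = \tilde{a}_t - y_{t+1}$ is independent of $\tilde{s}_t$, that $a^*(\mathbf{b})$ is the myopic minimizer of the resulting one-period function $G$, and that the attainability hypothesis keeps the myopic action feasible; the unrolled comparison $\mathbb{E}\big[\sum_t \beta^t G(\mathbf{b}_t, a_t)\big] \geq \mathbb{E}\big[\sum_t \beta^t G(\mathbf{b}_t, a^*(\mathbf{b}_t))\big]$ then uses only the global-minimizer property of $a^*(\mathbf{b}_t)$ --- no convexity is invoked anywhere. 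Your argument replaces this reduction with a self-contained sample-path dominance induction: control-invariance of the belief trajectory makes the family $\{G(\cdot, \mathbf{b}_t)\}$ identical across policies, and your two-case induction (slack regime via global minimality plus the hypothesis, which makes the slack regime absorbing; overstock regime via the invariant $a^*(\mathbf{b}_t) \leq \tilde{a}^B_t \leq \tilde{a}^\pi_t$ together with monotonicity of the convex $G$ above its minimizer) delivers the per-period comparison pointwise on each observation path. What each approach buys: the paper's route exhibits the result as an instance of its general inheritance framework and of Sobel-style myopic optimality, and it does not require $G$ to be convex; your route is more elementary, makes the role of separability transparent, and --- notably --- handles the overstock transient $\tilde{s}_t > a^*(\mathbf{b}_t)$ explicitly, a regime in which the myopic action $a^*(\mathbf{b}_t)$ is not even feasible and which the paper's feasibility condition (iv) passes over rather quickly, whereas the $\max$-form of the policy plus convexity handle it rigorously in your argument. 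Your flagged side conditions ($\tilde{p} > 0$ for a finite minimizer, bounded costs for the pathwise-to-expectation passage) are exactly the right ones to pin down.
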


The proof of Proposition \ref{prop: base stock optimality}, based on inheritance of myopic optimal policy structure from the MDPs of \cite{Sobel81}, is in Appendix D.

\emph{\textbf{Solution Procedure.}} Let $\Delta \triangleq \{\sum_{j=1}^\tau y_j: y_1, \ldots, y_\tau \in \mathcal{Y}\} = \{\delta_1, \ldots, \delta_{|\Delta|}\}$, the set of possible total demands over $\tau$ epochs, and suppose the $\delta_i$ are in ascending order $(\delta_1 < \delta_2 < \ldots < \delta_{|\Delta|})$. \cite{Bishop19} show that the optimal base stock levels induce a \emph{linear partition} of the belief space, $\mathcal{B}$ into sets $\{\mathcal{B}_\delta: \delta \in \Delta\}$ such that for all $\mathbf{b} \in \mathcal{B}_\delta$, $a^*(\mathbf{b}) = \delta$. These sets are defined by the Newsvendor critical fractile, $\frac{\tilde{p}}{\tilde{p} + \tilde{h}}$:
	\begin{equation}
		\mathcal{B}_{\delta_m} \triangleq  \left\{ \mathbf{b} \in \mathcal{B}: P \left[ \sum_{j=1}^\tau y_j  \leq \delta_{m-1} \vert \mathbf{b} \right]	< \frac{\tilde{p}}{\tilde{p} + \tilde{h}} 	\leq  P \left[ \sum_{j=1}^\tau y_j  \leq \delta_{m} \vert \mathbf{b} \right]	\right\}.
	\end{equation}
Rather than solving for these partitioning hyperplanes analytically, which can be difficult depending on the demand model, we construct them using Monte Carlo simulation and soft-margin support vector machines (SVM). The procedure is detailed in Figure \ref{fig: partition B}. In Step 1, we generate a finite grid of belief vectors through belief trajectory simulation. Then, in Step 2 and 3, we use Monte Carlo simulation of the demand process to calculate the estimated optimal base stock levels. These then serve as labels upon which we can train SVM classifiers in step 4. We note that the multi-class SVM of Step 4 can be solved by solving $|\Delta|$ one-versus-rest SVMs. Figure \ref{fig: all partition grids} illustrates this method for approximating the partition $\{\mathcal{B}_\delta: \delta \in \Delta\}$ for a small example. 

\emph{\textbf{Computational Experiments.}} Now we give an numerical example that is meant to be illustrative of the process a practitioner might go through to train a statistical learning model for demand, given historical observations of the demand and AOD processes, and then utilize this learning model to construct ``good" policies using the SVM-based method, above. For this example, we assume that the true demand and AOD processes are generated from a HMM. The dynamics of the latent states under the ``true" HMM ($\mathcal{H}^\text{true}$) are defined by the following transition matrix:
	\begin{equation*}
		U = 
		\begin{bmatrix}
			0.7 & 0.2 & 0.1 \\
			0.3 & 0.5 & 0.2 \\
			0.3 & 0.3 & 0.4 
		\end{bmatrix}, \quad P[u_{t+1} = j \vert u_t = i] = U(i, j).
	\end{equation*}
For each of these three latent states, the demand and AOD processes are drawn from discrete multi-variate Normal distributions, so that the conditional probabilities $P[y_t, x_t \vert u_t]$ are defined by the following mean ($\zeta_u$) vectors and covariance matrices ($\Sigma_u$):
	\begin{equation*}
		\zeta_1 = \begin{bmatrix}
		10  \\
		8
		\end{bmatrix},
		\Sigma_1 = \begin{bmatrix}
		5 & 1 \\
		1 & 5
		\end{bmatrix},
		\zeta_2 = \begin{bmatrix}
		20  \\
		10
		\end{bmatrix},
		\Sigma_2 = \begin{bmatrix}
		10 & 1 \\
		1 & 10
		\end{bmatrix},
		\zeta_3 = \begin{bmatrix}
		25  \\
		12
		\end{bmatrix},
		\Sigma_3 = \begin{bmatrix}
		15 & 1 \\
		1 & 15
		\end{bmatrix}.
	\end{equation*}
The other parameters specifying the SEP-POMDP inventory model are $\beta = 0.93$, $\tau = 2$, $\tilde{p} = 3$, $\tilde{h} = 1$. We simulate the policies across a horizon $T=65$. The numerical experiment proceeds as follows:
	\begin{enumerate}
		\item Initialize $\mathcal{H}^\text{true}$. Compute the SVM-generated base stock policy according to the procedure in Figure \ref{fig: partition B}, $\mathrm{SVM}^{\text{true}}$. Evaluate $\mathcal{H}^\text{true}$ according to the Monte Carlo policy evaluation procedure in Figure \ref{fig: SVM-Monte Carlo evaluation method} (with $\mathcal{H}^\text{eval} = \mathcal{H}^\text{true}$).
		
		\item \vspace{-0.3cm} Generate a synthetic training dataset, $\mathscr{D}$, by simulating multiple trajectories of length $T=65$.
		
		\item \vspace{-0.3cm} Train a HMM on $\mathscr{D}$ using the expectation maximization algorithm of \cite{baum1966statistical}, $\mathcal{H}^\text{train}$. Compute the SVM-generated base stock policy according to the procedure in Figure \ref{fig: partition B}, $\mathrm{SVM}^{\text{train}}$. Evaluate $\mathcal{H}^\text{train}$ according to the Monte Carlo policy evaluation procedure in Figure \ref{fig: SVM-Monte Carlo evaluation method} (with $\mathcal{H}^\text{eval} = \mathcal{H}^\text{train}$).
	\end{enumerate}
As in Section 5, for our example here we have (synthetically-generated) training $\mathscr{D}$ upon which we can train a learning model \emph{prior to} implementing (or ``scoring" the learning model) in the SEP-POMDP optimization problem. The solution procedure makes use of the policy structure (inheritance) and also separability (in belief simulation and HMM training) in order to construct good policy solutions.

For our computational experiment, policies are evaluated using $10,000$ Monte Carlo simulations. In Figure \ref{fig: computational example graph} we compare the evaluation of the base stock policy based on $\mathrm{SVM}^{\text{true}}$ to $\mathrm{SVM}^{\text{train}}$ for different sizes of the dataset $\mathscr{D}$. Since the expectation maximization algorithm used to train $\mathcal{H}^\text{train}$ does not have convergence guarantees, for each dataset size we give the HMM training 5 different random initializations and report both the policy evaluation under the best performing initialization and also the average across the initializations. Since we do not have convergence guarantees in training these HMMs, we see that the gap between the policy evaluations narrows as the training dataset size increases, but then plateaus.

\section{Conclusion}
We have introduced a specially structured POMDP, the SEP-POMDP, for modeling sequential decision-making environments in the presence of exogenous observations that affect the dynamics and objective of the system. We showed that this class of models inherits optimal value and policy function structural properties from related MDPs, thus extending the deep operations research literature proving such structures for the general MDP and also myriad real-world applications. In a particularly important discussion, we then showed that our formulation encompasses a wide array of supervised learning models for modeling the exogenous uncertainty introduced to the system through the observation process. The range of supervised learning methods is vast and includes: discriminative learning models such as random forests, LOESS, kernel regression, switching regressions, and autoregressive recurrent neural networks; Markovian forecasting models such as Brownian motion, Ornstein-Uhlenbeck processes, and ARMA processes; as well as generative models such as HMMs and Bayesian networks. We gave a sense for the range of applications for which the SEP-POMDP framework can include by discussing its relationship to models from various fields. Finally, we discussed a particular inventory problem under procurement delays, as an illustrative example as to how one might integrate various properties of the SEP-POMDP in a solution procedure. We give additional attention to computational considerations in the appendix.

Much of the reinforcement learning literature is concerned with learning (near) optimal policies through repeated interaction with the decision-making environment, and in many applications in a model-free environment. Developing these methods for learning in the midst of uncertainty is a natural evolution from the foundational MDP that arose out of the operations research community, in which assumptions that the transition probabilities in the system are well-specified are common. What happens, however, when interactions in the environment are expensive, or reinforcement learning requires a number of interactions that pushes the limits of our computing capabilities, as we seek to apply these methods to more and more complex real-world systems? Our reinforcement learning models could benefit substantially by leveraging supervised learning methods for modeling exogenous uncertainty in the system. We see the SEP-POMDP as a potentially foundational modeling framework for building next generation reinforcement learning methods and applications that leverage supervised learning for explaining the uncertainty in the system based on (possibly very large) data.


\begin{appendices}
	\section{Proof of Inheritance Property}
	
	\begin{proof}[Proof of Proposition \ref{prop: SEP-POMDP inheritance}.]
		We proceed by demonstrating that P(b) and P(c) hold and then applying Proposition \ref{Porteus}. Suppose $v(\cdot, x) \in \tilde{V}$ for all $\mathbf{b} \in \mathcal{B}$. Recall, we have
		\[ Hv(s,\mathbf{b}) 		= 		\min_{a \in \mathcal{A}(s)} \int_{y'} \sigma(y' \vert \mathbf{b}) h_{y'} \big( s, a, v(\cdot, \lambda(y',\mathbf{b})) \big) \dd{y'}.	\]
		By B(a), we have that $h_{y'} \big( \cdot, \cdot, v(\cdot, \lambda(y',\mathbf{b})) \big) \in \tilde{F}$ for all $(y', \mathbf{b}) \in \mathcal{Y} \times \mathcal{B}$. Further, 
		\[\int_{y'} \sigma(y' \vert \mathbf{b}) h_{y'} \big( \cdot, \cdot, v(\cdot, \lambda(y',\mathbf{b})) \big) \dd{y'}  \in \tilde{F}\] 
		as well, since $\tilde{F}$ is a space of functions that is a convex cone. By B(b), minimizing over feasible policies from $S$ to $A$ maps functions in $\tilde{F}$ into $\tilde{V}$. We conclude that $Hv(\cdot, \mathbf{b}) \in \tilde{V}$ for all $\mathbf{b} \in \mathcal{B}$ and P(b) holds. 
		
		By the same logic, since $\int_{y'} \sigma(y' \vert \mathbf{b}) h_{y'} \big( \cdot, \cdot, v(\cdot, \lambda(y',\mathbf{b})) \big) \dd{y'}  \in \tilde{F}$, B(c) guarantees that P(c) holds as well. The conclusion follows by Proposition \ref{Porteus}.
	\end{proof}
	
\end{appendices}

\bibliography{refs_POMDPs}

\section*{Figures}
\begin{figure}[H]
	\centering
	\resizebox{0.6\textwidth}{2.5in}{
		\begin{tikzpicture}
		\draw (-4,0) node[anchor=north]{$(1, 0, 0)$}
		-- (4,0) node[anchor=north]{$(0, 1, 0)$}
		-- (0,8) node[anchor=south]{$(0, 0, 1)$}
		-- cycle;
		
		\draw (1,0) -- (3, 2);
		\draw (0,0) -- (0, 3);
		\draw (0, 3) -- (2.23, 3.5);
		\draw (-3, 2) -- (0, 3);
		\draw (-1.5, 5) -- (0,3);
		
		\node [align=flush center,text width=8cm] at (-1.7, 1.1)
		{$\delta^*_1$};
		\node [align=flush center,text width=8cm] at (-1.5, 3.3)
		{$\delta^*_2$};
		\node [align=flush center,text width=8cm] at (0.2, 4.9)
		{$\delta^*_3$};
		\node [align=flush center,text width=8cm] at (1.1, 1.8)
		{$\delta^*_4$};
		\node [align=flush center,text width=8cm] at (2.75, 0.7)
		{$\delta^*_5$};
		\end{tikzpicture}
	}
	
	\caption{A graphical depiction of Corollary \ref{cor: belief space partition}, with a 3-dimensional belief simplex $\mathcal{B}$, and where $\pi^*(\cdot, \mathbf{b}) = \delta^*_j$ for all $\mathbf{b}$ in partition region $\mathcal{B}_j$.}
\end{figure}
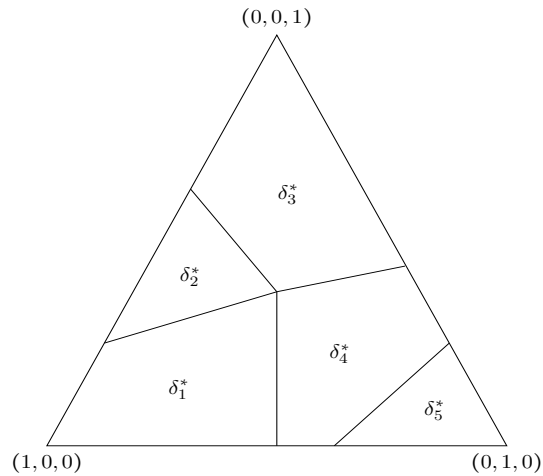

\begin{figure}[H]
	\hrulefill
	\centering
	\begin{enumerate}
		\item Generate a finite set of belief points, $\mathcal{B}' \subset \mathcal{B}$, via belief trajectory simulation (as in Appendix E). Let $\mathcal{B}' = \{\mathbf{b}_1, \ldots, \mathbf{b}_K \}$.
		
		\item For each $\mathbf{b} \in \mathcal{B}'$, generate $N$ demand trajectories $(y^n_1, \ldots, y^n_\tau)$. This gives us an estimate of the probabilities we need to compute the base stock level $a^*(\mathbf{b})$: 
		\[	\hat{P} \left[ \sum_{j=1}^\tau y_j  = \delta \vert \mathbf{b} \right] = \frac{1}{N} \sum_{n=1}^N \mathbf{1} \left\{ \sum_{j=1}^\tau y^n_j = \delta \right\}.	\]
		
		\item Calculate the estimated base stock level, $\hat{a}(\mathbf{b})$, for each $\mathbf{b} \in \mathcal{B}'$:	
		\begin{equation}
		\hat{a}(\mathbf{b}) \in \argmin_{\tilde{a}} \left\{ \sum_{\delta \in \Delta} \hat{P} \left[ \sum_{j=1}^\tau y_j  = \delta \vert \mathbf{b} \right] \left[  \tilde{h} \left(\tilde{a} - \sum_{j=1}^{\tau} y_j \right)^+ + \tilde{p} \left(\sum_{j=1}^{\tau} y_j  - \tilde{a}\right)^+ \right] \right\}. 
		\label{eq: base stock policy definition}
		\end{equation}
		
		\item Generate the separating hyperplanes by training a multi-class linear, soft-margin SVM on the set of tuples $\{ (\mathbf{b}_i, \hat{a}(\mathbf{b}_i)): i = 1, \ldots, K \}$.
	\end{enumerate}
	\caption{Partitioning the belief space, $\mathcal{B}$.}
	\hrulefill
	\label{fig: partition B}
\end{figure}

\begin{figure}[H]
	\centering
	\begin{subfigure}{.48\textwidth}
		\centering
		\tdplotsetmaincoords{70}{60}
		\begin{tikzpicture}[tdplot_main_coords, scale = 4.5]
		\def\laxis{1.1}
		\def\ltriangle{1}
		\def\ltick{.02}
		\draw [->] (0,0,0) -- (\laxis,0,0) node [below] {$\mathbf{b}(u_{(1)})$};
		\draw [->] (0,0,0) -- (0,\laxis,0) node [right] {$\mathbf{b}(u_{(2)})$};
		\draw [->] (0,0,0) -- (0,0,\laxis) node [left] {$\mathbf{b}(u_{(3)})$};
		
		\foreach \Point in {
			(0.079, 0.38, 0.541),
			(0.26, 0.455, 0.285),
			(0.472, 0.129, 0.399),
			(0.748, 0.015, 0.237),
			(0.653, 0.001, 0.346),
			(0.015, 0.304, 0.681),
			(0.519, 0.105, 0.376),
			(0.24, 0.292, 0.468),
			(0.197, 0.29, 0.513),
			(0.487, 0.036, 0.477),
			(0.142, 0.168, 0.69),
			(0.052, 0.415, 0.533),
			(0.127, 0.417, 0.456),
			(0.289, 0.237, 0.474),
			(0.053, 0.344, 0.603),
			(0.123, 0.202, 0.675),
			(0.167, 0.371, 0.462),
			(0.383, 0.138, 0.479),
			(0.113, 0.159, 0.728),
			(0.302, 0.415, 0.283),
			(0.319, 0.217, 0.464),
			(0.332, 0.059, 0.609),
			(0.428, 0.175, 0.397),
			(0.39, 0.183, 0.427),
			(0.071, 0.469, 0.46),
			(0.329, 0.155, 0.516),
			(0.113, 0.47, 0.417),
			(0.3, 0.165, 0.535),
			(0.098, 0.572, 0.33),
			(0.483, 0.136, 0.381),
			(0.422, 0.131, 0.447),
			(0.459, 0.009, 0.532),
			(0.197, 0.019, 0.784),
			(0.05, 0.528, 0.422),
			(0.553, 0.106, 0.341),
			(0.284, 0.309, 0.407),
			(0.465, 0.215, 0.32),
			(0.116, 0.345, 0.539),
			(0.434, 0.008, 0.558),
			(0.223, 0.173, 0.604),
			(0.087, 0.589, 0.324),
			(0.358, 0.154, 0.488),
			(0.576, 0.029, 0.395),
			(0.374, 0.213, 0.413),
			(0.38, 0.271, 0.349),
			(0.376, 0.089, 0.535),
			(0.55, 0.097, 0.353),
			(0.016, 0.577, 0.407),
			(0.577, 0.044, 0.379),
			(0.063, 0.619, 0.318),
			(0.462, 0.175, 0.363),
			(0.102, 0.2, 0.698),
			(0.421, 0.01, 0.569),
			(0.101, 0.108, 0.791)
		}{
			\node [color = violet] at \Point {\textbullet};
		}
		
		\foreach \Point in {
			(0.345, 0.443, 0.212),
			(0.021, 0.737, 0.242),
			(0.289, 0.467, 0.244),
			(0.629, 0.206, 0.165),
			(0.732, 0.037, 0.231)
		}{
			\node [color = blue] at \Point {\textbullet};
		}
		
		\foreach \Point in {
			(0.692, 0.214, 0.094),
			(0.057, 0.767, 0.176),
			(0.238, 0.584, 0.178),
			(0.531, 0.254, 0.215),
			(0.062, 0.738, 0.2),
			(0.805, 0.011, 0.184),
			(0.722, 0.155, 0.123),
			(0.4, 0.503, 0.097),
			(0.426, 0.44, 0.134),
			(0.801, 0.024, 0.175),
			(0.149, 0.794, 0.057),
			(0.43, 0.415, 0.155),
			(0.161, 0.826, 0.013),
			(0.446, 0.414, 0.14),
			(0.177, 0.624, 0.199),
			(0.02, 0.959, 0.021),
			(0.116, 0.785, 0.099),
			(0.437, 0.526, 0.037),
			(0.222, 0.698, 0.08),
			(0.432, 0.465, 0.103),
			(0.284, 0.62, 0.096),
			(0.027, 0.856, 0.117),
			(0.385, 0.395, 0.22),
			(0.582, 0.281, 0.137),
			(0.127, 0.808, 0.065),
			(0.321, 0.653, 0.026)
		}{
			\node [color = green] at \Point {\textbullet};
		}
		
		\foreach \Point in {
			(0.661, 0.32, 0.019),
			(0.661, 0.333, 0.006),
			(0.57, 0.38, 0.05),
			(0.633, 0.336, 0.031),
			(0.729, 0.261, 0.01),
			(0.864, 0.018, 0.118),
			(0.411, 0.557, 0.032),
			(0.581, 0.33, 0.089),
			(0.847, 0.044, 0.109),
			(0.776, 0.136, 0.088),
			(0.532, 0.45, 0.018)
		}{
			\node [color = orange] at \Point {\textbullet};
		}
		
		\foreach \Point in {
			(0.976, 0.005, 0.019),
			(0.934, 0.028, 0.038),
			(0.941, 0.024, 0.035),
			(0.712, 0.285, 0.003)
		}{
			\node [color = red] at \Point {\textbullet};
		}
		
		\pgfmathtruncatemacro{\nticks}{floor(\laxis)-1}
		\begin{scope}[
		help lines,
		every node/.style={inner sep=1pt,text=black}
		]
		\foreach \coord in {0.5, 1} {
			\draw (\coord,\ltick,0) -- ++(0,-\ltick,0) -- ++(0,0,\ltick)
			node [pos=1,above left] {\coord};
			\draw (\ltick,\coord,0) -- ++(-\ltick,0,0) -- ++(0,0,\ltick)
			node [pos=1, above right] {\coord};
			\draw (\ltick,0,\coord) -- ++(-\ltick,0,0) -- ++(0,\ltick,0)
			node [at start,above right] {\coord};
		}
		\end{scope}
		\filldraw [opacity=0.15,black] (\ltriangle,0,0) -- (0,\ltriangle,0)
		-- (0,0,\ltriangle) -- cycle;
		\end{tikzpicture}
		\subcaption{Points randomly generated on the belief simplex, $\mathcal{B}$, and labeled.}
		\label{fig: Random points in grid}
	\end{subfigure}
	\hfill
	\begin{subfigure}{.48\textwidth}
		\centering
		\tdplotsetmaincoords{70}{130}
		\begin{tikzpicture}[tdplot_main_coords, scale = 4.5]
		\def\laxis{1.1}
		\def\ltriangle{1}
		\def\ltick{.02}
		\draw [->] (0,0,0) -- (\laxis,0,0) node [below] {$\mathbf{b}(u_{(1)})$};
		\draw [->] (0,0,0) -- (0,\laxis,0) node [right] {$\mathbf{b}(u_{(2)})$};
		\draw [->] (0,0,0) -- (0,0,\laxis) node [left] {$\mathbf{b}(u_{(3)})$};
		
		\foreach \Point in {
			(0.079, 0.38, 0.541),
			(0.26, 0.455, 0.285),
			(0.472, 0.129, 0.399),
			(0.748, 0.015, 0.237),
			(0.653, 0.001, 0.346),
			(0.015, 0.304, 0.681),
			(0.519, 0.105, 0.376),
			(0.24, 0.292, 0.468),
			(0.197, 0.29, 0.513),
			(0.487, 0.036, 0.477),
			(0.142, 0.168, 0.69),
			(0.052, 0.415, 0.533),
			(0.127, 0.417, 0.456),
			(0.289, 0.237, 0.474),
			(0.053, 0.344, 0.603),
			(0.123, 0.202, 0.675),
			(0.167, 0.371, 0.462),
			(0.383, 0.138, 0.479),
			(0.113, 0.159, 0.728),
			(0.302, 0.415, 0.283),
			(0.319, 0.217, 0.464),
			(0.332, 0.059, 0.609),
			(0.428, 0.175, 0.397),
			(0.39, 0.183, 0.427),
			(0.071, 0.469, 0.46),
			(0.329, 0.155, 0.516),
			(0.113, 0.47, 0.417),
			(0.3, 0.165, 0.535),
			(0.098, 0.572, 0.33),
			(0.483, 0.136, 0.381),
			(0.422, 0.131, 0.447),
			(0.459, 0.009, 0.532),
			(0.197, 0.019, 0.784),
			(0.05, 0.528, 0.422),
			(0.553, 0.106, 0.341),
			(0.284, 0.309, 0.407),
			(0.465, 0.215, 0.32),
			(0.116, 0.345, 0.539),
			(0.434, 0.008, 0.558),
			(0.223, 0.173, 0.604),
			(0.087, 0.589, 0.324),
			(0.358, 0.154, 0.488),
			(0.576, 0.029, 0.395),
			(0.374, 0.213, 0.413),
			(0.38, 0.271, 0.349),
			(0.376, 0.089, 0.535),
			(0.55, 0.097, 0.353),
			(0.016, 0.577, 0.407),
			(0.577, 0.044, 0.379),
			(0.063, 0.619, 0.318),
			(0.462, 0.175, 0.363),
			(0.102, 0.2, 0.698),
			(0.421, 0.01, 0.569),
			(0.101, 0.108, 0.791)
		}{
			\node [color = violet] at \Point {\textbullet};
		}
		
		\foreach \Point in {
			(0.345, 0.443, 0.212),
			(0.021, 0.737, 0.242),
			(0.289, 0.467, 0.244),
			(0.629, 0.206, 0.165),
			(0.732, 0.037, 0.231)
		}{
			\node [color = blue] at \Point {\textbullet};
		}
		
		\foreach \Point in {
			(0.692, 0.214, 0.094),
			(0.057, 0.767, 0.176),
			(0.238, 0.584, 0.178),
			(0.531, 0.254, 0.215),
			(0.062, 0.738, 0.2),
			(0.805, 0.011, 0.184),
			(0.722, 0.155, 0.123),
			(0.4, 0.503, 0.097),
			(0.426, 0.44, 0.134),
			(0.801, 0.024, 0.175),
			(0.149, 0.794, 0.057),
			(0.43, 0.415, 0.155),
			(0.161, 0.826, 0.013),
			(0.446, 0.414, 0.14),
			(0.177, 0.624, 0.199),
			(0.02, 0.959, 0.021),
			(0.116, 0.785, 0.099),
			(0.437, 0.526, 0.037),
			(0.222, 0.698, 0.08),
			(0.432, 0.465, 0.103),
			(0.284, 0.62, 0.096),
			(0.027, 0.856, 0.117),
			(0.385, 0.395, 0.22),
			(0.582, 0.281, 0.137),
			(0.127, 0.808, 0.065),
			(0.321, 0.653, 0.026)
		}{
			\node [color = green] at \Point {\textbullet};
		}
		
		\foreach \Point in {
			(0.661, 0.32, 0.019),
			(0.661, 0.333, 0.006),
			(0.57, 0.38, 0.05),
			(0.633, 0.336, 0.031),
			(0.729, 0.261, 0.01),
			(0.864, 0.018, 0.118),
			(0.411, 0.557, 0.032),
			(0.581, 0.33, 0.089),
			(0.847, 0.044, 0.109),
			(0.776, 0.136, 0.088),
			(0.532, 0.45, 0.018)
		}{
			\node [color = orange] at \Point {\textbullet};
		}
		
		\foreach \Point in {
			(0.976, 0.005, 0.019),
			(0.934, 0.028, 0.038),
			(0.941, 0.024, 0.035),
			(0.712, 0.285, 0.003)
		}{
			\node [color = red] at \Point {\textbullet};
		}
		
		
		\draw [-, color = red, line width = 0.25mm] (0.977, 0, 0.023) -- (0.967, 0.033, 0) node [below] {};
		\draw [-, color = orange, line width = 0.25mm] (0.837, 0, 0.163) -- (0.431, 0.569, 0) node [below] {};
		\draw [-, color = green, line width = 0.25mm] (0.774, 0, 0.226) -- (0, 0.766, 0.234) node [below] {};
		\draw [-, color = blue, line width = 0.25mm] (0.739, 0, 0.261) -- (0, 0.72, 0.28) node [below] {};
		
		\filldraw [opacity=.15, red] (1,0,0) -- (0.977, 0, 0.023)
		-- (0.967, 0.033, 0) -- cycle;
		
		\filldraw [opacity=.15, orange] (0.977, 0, 0.023)
		-- (0.837, 0, 0.163)  -- (0.431, 0.569, 0) -- (0.967, 0.033, 0) -- cycle;
		
		\filldraw [opacity=.15, green] (0.837, 0, 0.163)
		--  (0.774, 0, 0.226) -- (0, 0.766, 0.234) -- (0, 1, 0) -- (0.431, 0.569, 0)  -- cycle;
		
		\filldraw [opacity=.15, blue] (0.774, 0, 0.226) 
		--  (0.739, 0, 0.261) -- (0, 0.72, 0.28)  -- (0, 0.766, 0.234)   -- cycle;
		
		\filldraw [opacity=.15, violet] (0.739, 0, 0.261)
		--  (0, 0, 1) -- (0, 0.72, 0.28)   -- cycle;
		
		\pgfmathtruncatemacro{\nticks}{floor(\laxis)-1}
		\begin{scope}[
		help lines,
		every node/.style={inner sep=1pt,text=black}
		]
		\foreach \coord in {0.5, 1} {
			\draw (\coord,\ltick,0) -- ++(0,-\ltick,0) -- ++(0,0,\ltick)
			node [pos=1,above left] {\coord};
			\draw (\ltick,\coord,0) -- ++(-\ltick,0,0) -- ++(0,0,\ltick)
			node [pos=1, above right] {\coord};
			\draw (\ltick,0,\coord) -- ++(-\ltick,0,0) -- ++(0,\ltick,0)
			node [at start,above right] {\coord};
		}
		\end{scope}
		\filldraw [opacity=0,black] (\ltriangle,0,0) -- (0,\ltriangle,0)
		-- (0,0,\ltriangle) -- cycle;
		\end{tikzpicture}
		\subcaption{The SVM-generated partition of $\mathcal{B}$ with $C = 10$.}
		\label{fig: SVM grid with C = 10}
	\end{subfigure}
	
	\begin{subfigure}[h]{0.48\textwidth}
		\centering
		\tdplotsetmaincoords{70}{130}
		\begin{tikzpicture}[tdplot_main_coords, scale = 4.5]
		\def\laxis{1.1}
		\def\ltriangle{1}
		\def\ltick{.02}
		\draw [->] (0,0,0) -- (\laxis,0,0) node [below] {$\mathbf{b}(u_{(1)})$};
		\draw [->] (0,0,0) -- (0,\laxis,0) node [right] {$\mathbf{b}(u_{(2)})$};
		\draw [->] (0,0,0) -- (0,0,\laxis) node [left] {$\mathbf{b}(u_{(3)})$};
		
		\foreach \Point in {
			(0.079, 0.38, 0.541),
			(0.26, 0.455, 0.285),
			(0.472, 0.129, 0.399),
			(0.748, 0.015, 0.237),
			(0.653, 0.001, 0.346),
			(0.015, 0.304, 0.681),
			(0.519, 0.105, 0.376),
			(0.24, 0.292, 0.468),
			(0.197, 0.29, 0.513),
			(0.487, 0.036, 0.477),
			(0.142, 0.168, 0.69),
			(0.052, 0.415, 0.533),
			(0.127, 0.417, 0.456),
			(0.289, 0.237, 0.474),
			(0.053, 0.344, 0.603),
			(0.123, 0.202, 0.675),
			(0.167, 0.371, 0.462),
			(0.383, 0.138, 0.479),
			(0.113, 0.159, 0.728),
			(0.302, 0.415, 0.283),
			(0.319, 0.217, 0.464),
			(0.332, 0.059, 0.609),
			(0.428, 0.175, 0.397),
			(0.39, 0.183, 0.427),
			(0.071, 0.469, 0.46),
			(0.329, 0.155, 0.516),
			(0.113, 0.47, 0.417),
			(0.3, 0.165, 0.535),
			(0.098, 0.572, 0.33),
			(0.483, 0.136, 0.381),
			(0.422, 0.131, 0.447),
			(0.459, 0.009, 0.532),
			(0.197, 0.019, 0.784),
			(0.05, 0.528, 0.422),
			(0.553, 0.106, 0.341),
			(0.284, 0.309, 0.407),
			(0.465, 0.215, 0.32),
			(0.116, 0.345, 0.539),
			(0.434, 0.008, 0.558),
			(0.223, 0.173, 0.604),
			(0.087, 0.589, 0.324),
			(0.358, 0.154, 0.488),
			(0.576, 0.029, 0.395),
			(0.374, 0.213, 0.413),
			(0.38, 0.271, 0.349),
			(0.376, 0.089, 0.535),
			(0.55, 0.097, 0.353),
			(0.016, 0.577, 0.407),
			(0.577, 0.044, 0.379),
			(0.063, 0.619, 0.318),
			(0.462, 0.175, 0.363),
			(0.102, 0.2, 0.698),
			(0.421, 0.01, 0.569),
			(0.101, 0.108, 0.791)
		}{
			\node [color = violet] at \Point {\textbullet};
		}
		
		\foreach \Point in {
			(0.345, 0.443, 0.212),
			(0.021, 0.737, 0.242),
			(0.289, 0.467, 0.244),
			(0.629, 0.206, 0.165),
			(0.732, 0.037, 0.231)
		}{
			\node [color = blue] at \Point {\textbullet};
		}
		
		\foreach \Point in {
			(0.692, 0.214, 0.094),
			(0.057, 0.767, 0.176),
			(0.238, 0.584, 0.178),
			(0.531, 0.254, 0.215),
			(0.062, 0.738, 0.2),
			(0.805, 0.011, 0.184),
			(0.722, 0.155, 0.123),
			(0.4, 0.503, 0.097),
			(0.426, 0.44, 0.134),
			(0.801, 0.024, 0.175),
			(0.149, 0.794, 0.057),
			(0.43, 0.415, 0.155),
			(0.161, 0.826, 0.013),
			(0.446, 0.414, 0.14),
			(0.177, 0.624, 0.199),
			(0.02, 0.959, 0.021),
			(0.116, 0.785, 0.099),
			(0.437, 0.526, 0.037),
			(0.222, 0.698, 0.08),
			(0.432, 0.465, 0.103),
			(0.284, 0.62, 0.096),
			(0.027, 0.856, 0.117),
			(0.385, 0.395, 0.22),
			(0.582, 0.281, 0.137),
			(0.127, 0.808, 0.065),
			(0.321, 0.653, 0.026)
		}{
			\node [color = green] at \Point {\textbullet};
		}
		
		\foreach \Point in {
			(0.661, 0.32, 0.019),
			(0.661, 0.333, 0.006),
			(0.57, 0.38, 0.05),
			(0.633, 0.336, 0.031),
			(0.729, 0.261, 0.01),
			(0.864, 0.018, 0.118),
			(0.411, 0.557, 0.032),
			(0.581, 0.33, 0.089),
			(0.847, 0.044, 0.109),
			(0.776, 0.136, 0.088),
			(0.532, 0.45, 0.018)
		}{
			\node [color = orange] at \Point {\textbullet};
		}
		
		\foreach \Point in {
			(0.976, 0.005, 0.019),
			(0.934, 0.028, 0.038),
			(0.941, 0.024, 0.035),
			(0.712, 0.285, 0.003)
		}{
			\node [color = red] at \Point {\textbullet};
		}
		
		
		\draw [-, color = red, line width = 0.25mm] (0.915, 0, 0.085) -- (0.81, 0.19, 0) node [below] {};
		\draw [-, color = orange, line width = 0.25mm] (0.837, 0, 0.163) -- (0.342, 0.658, 0) node [below] {};
		\draw [-, color = green, line width = 0.25mm] (0.824, 0, 0.176) -- (0, 0.745, 0.255) node [below] {};
		\draw [-, color = blue, line width = 0.25mm] (0.735, 0, 0.265) -- (0, 0.727, 0.273) node [below] {};
		
		\filldraw [opacity=.15, red] (1,0,0) -- (0.915, 0, 0.085)
		-- (0.81, 0.19, 0) -- cycle;
		
		\filldraw [opacity=.15, orange] (0.915, 0, 0.085)
		-- (0.837, 0, 0.163)  -- (0.342, 0.658, 0) -- (0.81, 0.19, 0)-- cycle;
		
		\filldraw [opacity=.15, green] (0.837, 0, 0.163)
		--  (0.824, 0, 0.176) --  (0, 0.745, 0.255) -- (0, 1, 0) -- (0.342, 0.658, 0)  -- cycle;
		
		\filldraw [opacity=.15, blue] (0.824, 0, 0.176)
		--  (0.735, 0, 0.265) -- (0, 0.727, 0.273)  --  (0, 0.745, 0.255)   -- cycle;
		
		\filldraw [opacity=.15, violet] (0.735, 0, 0.265)
		--  (0, 0, 1) -- (0, 0.727, 0.273)   -- cycle;
		
		\pgfmathtruncatemacro{\nticks}{floor(\laxis)-1}
		\begin{scope}[
		help lines,
		every node/.style={inner sep=1pt,text=black}
		]
		\foreach \coord in {0.5, 1} {
			\draw (\coord,\ltick,0) -- ++(0,-\ltick,0) -- ++(0,0,\ltick)
			node [pos=1, above left] {\coord};
			\draw (\ltick,\coord,0) -- ++(-\ltick,0,0) -- ++(0,0,\ltick)
			node [pos=1, above right] {\coord};
			\draw (\ltick,0,\coord) -- ++(-\ltick,0,0) -- ++(0,\ltick,0)
			node [at start, above right] {\coord};
		}
		\end{scope}
		\filldraw [opacity=0,black] (\ltriangle,0,0) -- (0,\ltriangle,0)
		-- (0,0,\ltriangle) -- cycle;
		\end{tikzpicture}
		\subcaption{The SVM-generated partition of $\mathcal{B}$ with $C = 50$.}
		\label{fig: SVM grid with C = 50}
	\end{subfigure}
	\hfill
	\begin{subfigure}[h]{0.48\linewidth}
		\centering
		\tdplotsetmaincoords{70}{130}
		\begin{tikzpicture}[tdplot_main_coords, scale = 4.5]
		\def\laxis{1.1}
		\def\ltriangle{1}
		\def\ltick{.02}
		\draw [->] (0,0,0) -- (\laxis,0,0) node [below] {$\mathbf{b}(u_{(1)})$};
		\draw [->] (0,0,0) -- (0,\laxis,0) node [right] {$\mathbf{b}(u_{(2)})$};
		\draw [->] (0,0,0) -- (0,0,\laxis) node [left] {$\mathbf{b}(u_{(3)})$};
		
		\foreach \Point in {
			(0.079, 0.38, 0.541),
			(0.26, 0.455, 0.285),
			(0.472, 0.129, 0.399),
			(0.748, 0.015, 0.237),
			(0.653, 0.001, 0.346),
			(0.015, 0.304, 0.681),
			(0.519, 0.105, 0.376),
			(0.24, 0.292, 0.468),
			(0.197, 0.29, 0.513),
			(0.487, 0.036, 0.477),
			(0.142, 0.168, 0.69),
			(0.052, 0.415, 0.533),
			(0.127, 0.417, 0.456),
			(0.289, 0.237, 0.474),
			(0.053, 0.344, 0.603),
			(0.123, 0.202, 0.675),
			(0.167, 0.371, 0.462),
			(0.383, 0.138, 0.479),
			(0.113, 0.159, 0.728),
			(0.302, 0.415, 0.283),
			(0.319, 0.217, 0.464),
			(0.332, 0.059, 0.609),
			(0.428, 0.175, 0.397),
			(0.39, 0.183, 0.427),
			(0.071, 0.469, 0.46),
			(0.329, 0.155, 0.516),
			(0.113, 0.47, 0.417),
			(0.3, 0.165, 0.535),
			(0.098, 0.572, 0.33),
			(0.483, 0.136, 0.381),
			(0.422, 0.131, 0.447),
			(0.459, 0.009, 0.532),
			(0.197, 0.019, 0.784),
			(0.05, 0.528, 0.422),
			(0.553, 0.106, 0.341),
			(0.284, 0.309, 0.407),
			(0.465, 0.215, 0.32),
			(0.116, 0.345, 0.539),
			(0.434, 0.008, 0.558),
			(0.223, 0.173, 0.604),
			(0.087, 0.589, 0.324),
			(0.358, 0.154, 0.488),
			(0.576, 0.029, 0.395),
			(0.374, 0.213, 0.413),
			(0.38, 0.271, 0.349),
			(0.376, 0.089, 0.535),
			(0.55, 0.097, 0.353),
			(0.016, 0.577, 0.407),
			(0.577, 0.044, 0.379),
			(0.063, 0.619, 0.318),
			(0.462, 0.175, 0.363),
			(0.102, 0.2, 0.698),
			(0.421, 0.01, 0.569),
			(0.101, 0.108, 0.791)
		}{
			\node [color = violet] at \Point {\textbullet};
		}
		
		\foreach \Point in {
			(0.345, 0.443, 0.212),
			(0.021, 0.737, 0.242),
			(0.289, 0.467, 0.244),
			(0.629, 0.206, 0.165),
			(0.732, 0.037, 0.231)
		}{
			\node [color = blue] at \Point {\textbullet};
		}
		
		\foreach \Point in {
			(0.692, 0.214, 0.094),
			(0.057, 0.767, 0.176),
			(0.238, 0.584, 0.178),
			(0.531, 0.254, 0.215),
			(0.062, 0.738, 0.2),
			(0.805, 0.011, 0.184),
			(0.722, 0.155, 0.123),
			(0.4, 0.503, 0.097),
			(0.426, 0.44, 0.134),
			(0.801, 0.024, 0.175),
			(0.149, 0.794, 0.057),
			(0.43, 0.415, 0.155),
			(0.161, 0.826, 0.013),
			(0.446, 0.414, 0.14),
			(0.177, 0.624, 0.199),
			(0.02, 0.959, 0.021),
			(0.116, 0.785, 0.099),
			(0.437, 0.526, 0.037),
			(0.222, 0.698, 0.08),
			(0.432, 0.465, 0.103),
			(0.284, 0.62, 0.096),
			(0.027, 0.856, 0.117),
			(0.385, 0.395, 0.22),
			(0.582, 0.281, 0.137),
			(0.127, 0.808, 0.065),
			(0.321, 0.653, 0.026)
		}{
			\node [color = green] at \Point {\textbullet};
		}
		
		\foreach \Point in {
			(0.661, 0.32, 0.019),
			(0.661, 0.333, 0.006),
			(0.57, 0.38, 0.05),
			(0.633, 0.336, 0.031),
			(0.729, 0.261, 0.01),
			(0.864, 0.018, 0.118),
			(0.411, 0.557, 0.032),
			(0.581, 0.33, 0.089),
			(0.847, 0.044, 0.109),
			(0.776, 0.136, 0.088),
			(0.532, 0.45, 0.018)
		}{
			\node [color = orange] at \Point {\textbullet};
		}
		
		\foreach \Point in {
			(0.976, 0.005, 0.019),
			(0.934, 0.028, 0.038),
			(0.941, 0.024, 0.035),
			(0.712, 0.285, 0.003)
		}{
			\node [color = red] at \Point {\textbullet};
		}
		
		\draw [-, color = red, line width = 0.25mm] (0.913, 0, 0.087) -- (0.698, 0.302, 0) node [below] {};
		\draw [-, color = orange, line width = 0.25mm] (0.862, 0, 0.138) -- (0.268, 0.733, 0) node [below] {};
		\draw [-, color = green, line width = 0.25mm] (0.799, 0, 0.201) -- (0, 0.806, 0.194) node [below] {};
		\draw [-, color = blue, line width = 0.25mm] (0.722, 0, 0.278) -- (0, 0.722, 0.278) node [below] {};
		
		
		\filldraw [opacity=.15, red] (1,0,0) -- (0.913, 0, 0.087)
		-- (0.698, 0.302, 0) -- cycle;
		
		\filldraw [opacity=.15, orange] (0.913, 0, 0.087)
		-- (0.862, 0, 0.138)  -- (0.268, 0.733, 0) -- (0.698, 0.302, 0) -- cycle;
		
		\filldraw [opacity=.15, green] (0.862, 0, 0.138)
		--  (0.799, 0, 0.201) -- (0, 0.806, 0.194) -- (0, 1, 0) -- (0.268, 0.733, 0)  -- cycle;
		
		\filldraw [opacity=.15, blue] (0.799, 0, 0.201)
		--  (0.722, 0, 0.278) -- (0, 0.722, 0.278)  -- (0, 0.806, 0.194)   -- cycle;
		
		\filldraw [opacity=.15, violet] (0.722, 0, 0.278)
		--  (0, 0, 1) -- (0, 0.722, 0.278)   -- cycle;
		
		\pgfmathtruncatemacro{\nticks}{floor(\laxis)-1}
		\begin{scope}[
		help lines,
		every node/.style={inner sep=1pt,text=black}
		]
		\foreach \coord in {0.5, 1} {
			\draw (\coord,\ltick,0) -- ++(0,-\ltick,0) -- ++(0,0,\ltick)
			node [pos=1, above left] {\coord};
			\draw (\ltick,\coord,0) -- ++(-\ltick,0,0) -- ++(0,0,\ltick)
			node [pos=1, above right] {\coord};
			\draw (\ltick,0,\coord) -- ++(-\ltick,0,0) -- ++(0,\ltick,0)
			node [at start, above right] {\coord};
		}
		\end{scope}
		\filldraw [opacity=0,black] (\ltriangle,0,0) -- (0,\ltriangle,0)
		-- (0,0,\ltriangle) -- cycle;
		\end{tikzpicture}
		\caption{The true partition of $\mathcal{B}$.}
		\label{fig: true grid}
	\end{subfigure}
	
	\caption[LoF entry]{Depicting example SVM partitions of $\mathcal{B}$ under different values of the SVM regularization parameter, $C$. The regions correspond to different values of the optimal base stock levels. In this example, the HMM latent state space has three elements $\mathcal{U} = \{u_{(1)}, u_{(2)}, u_{(3)}\}$, the AOD space has three elements $\mathcal{X} = \{x_{(1)}, x_{(2)}, x_{(3)}\}$, and the demand space has five elements $\mathcal{Y} = \{1, 2, 3, 4, 5\}$. The dynamics $P[y', x', u' \vert u]$ are governed by three matrices $U$, $Q$, and $Y$: 
		\begin{equation*}
		U = 
		\begin{bmatrix}
		0.75 & 0.125 & 0.125 \\
		0.125 & 0.75 & 0.125 \\
		0.125 & 0.125 & 0.75 
		\end{bmatrix}, \quad
		Q = 
		\begin{bmatrix}
		0.9 & 0.05 & 0.05 \\
		0.05 & 0.9 & 0.05 \\
		0.05 & 0.05 & 0.9 
		\end{bmatrix}, \quad
		Y = \begin{bmatrix}
		0.75 & 0.1 & 0.05 & 0.05 & 0.05 \\
		0.05 & 0.075 & 0.75 & 0.075 & 0.05 \\
		0.05 & 0.05 & 0.05  & 0.1 & 0.75
		\end{bmatrix},
		\end{equation*}
	where $U(i, j) = P[u' = u_{(j)} \vert u = u_{(i)}]$, $Q(i, k) = P[x' = x_{(k)} \vert u = u_{(i)}]$, $Y(i, l) = P[y' = l \vert u = u_{(i)}]$, and $P[y' = l, x' = x_{(k)}, u' = u_{(j)} \vert u = u_{(i)}] = U(i, j) Q(i, k) Y(i, l)$. The lead time is $\tau = 2$, the discount factor $\beta = 0.9$, $\tilde{p} = 70$, $\tilde{h} = 10$. }
	\label{fig: all partition grids}
	\hrulefill
\end{figure}
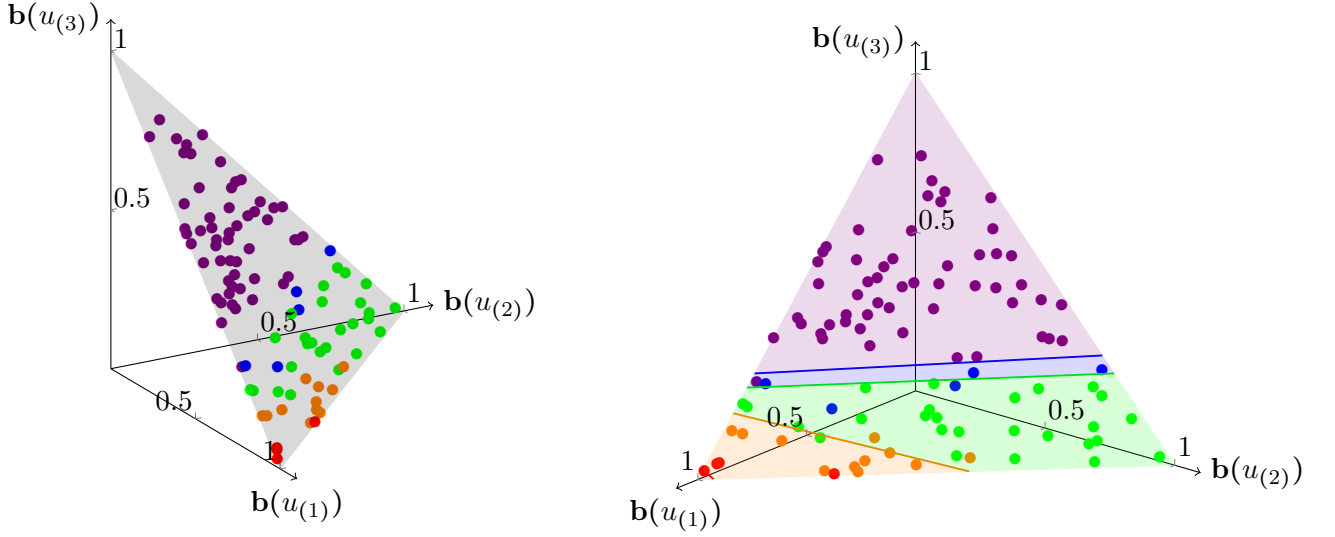
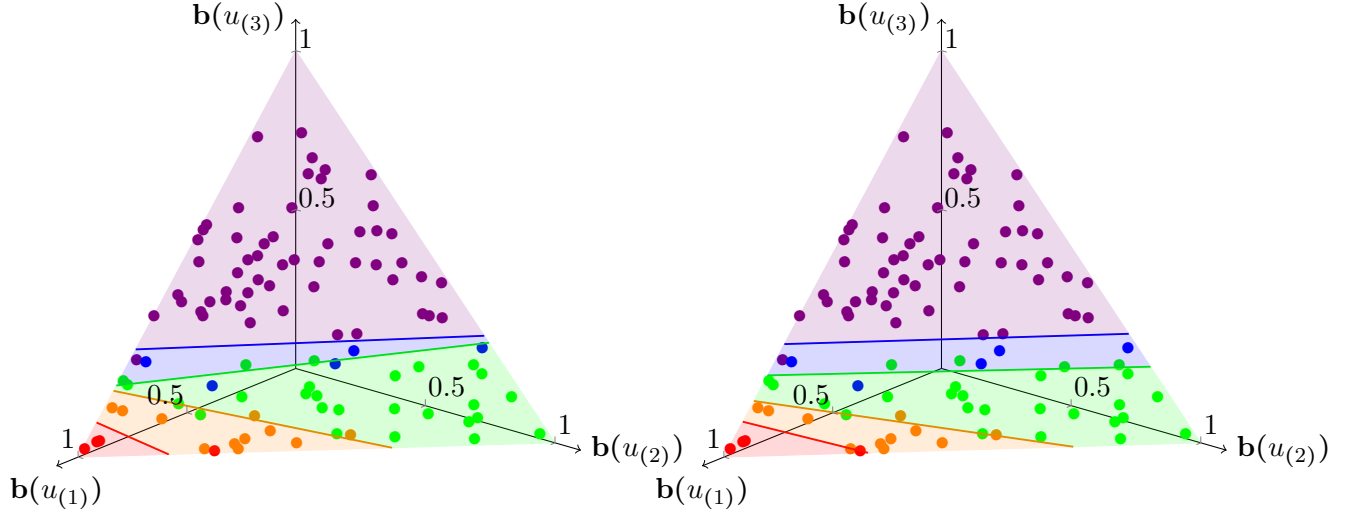

\begin{figure}[h]
	\hrulefill
	
	\textbf{Evaluate}($\mathcal{H}^\text{true}$, $\mathcal{H}^\text{eval}$, $\mathrm{SVM}^{\text{eval}}$, $\beta$, $\tau$, $\tilde{p}$, $\tilde{h}$, $T$, $N^{\text{sim}}$): For each Monte Carlo simulation $n = 1, \ldots, N^{\text{sim}}$, generate $v^n$ as follows.
	
	\begin{enumerate}
		\item Initialize $s^n_0 = 0$, $x^n_0 = \left[ \frac{1}{3}, \frac{1}{3}, \frac{1}{3} \right]$, $d^n_0 = 0$, $a^n_{-1} = \ldots = a^n_{-\tau} = 0$, and $v^n_\theta = 0$. Sample $u_0$ from the belief distribution $x_0$.
		
		\item For $t = 0, \ldots, T$:
		\begin{itemize}
			\item \textit{Determine ordering decision and cost.}
			\begin{flalign*}
			\tilde{a}^n_{t} &\gets \mathrm{SVM}^{\text{eval}}(\mathbf{b}^n_t)	&\\
			\tilde{s}^n_t 	&\gets s^n_t - \sum_{j = 1}^{\tau}{a^n_{t-j}} - d^n_t &\\
			a^n_t	&\gets \left( \tilde{a}^n_{t}- \tilde{s}^n_t  \right)^+ &\\
			v^n	&\gets v^n + \beta^t \big[ \tilde{h} \left( s^n_t + a^n_{t-\tau} - y^n_t \right)^+ + \tilde{p} \left( y^n_t - s^n_t - a^n_{t-\tau} \right)^+ \big]&
			\end{flalign*}
			\item \textit{Transition, costs, and belief update.}
			\begin{flalign*}
			&s^n_{t+1} \gets s^n_t + a^n_{t-\tau} - y^n_t	&\\
			&(y^n_{t+1}, x^n_{t+1}, u^n_{t+1}) \sim \mathcal{H}^\text{true}	& \\
			&\mathbf{b}^n_{t+1} \gets \lambda_{\mathcal{H}^\text{eval}}(y^n_{t+1}, x^n_{t+1}, \mathbf{b}^n_t) &
			\end{flalign*}
		\end{itemize}
	\end{enumerate}
	\textbf{Return:} $\sum_{n=1}^{N^{\text{sim}}} \frac{v^n}{N^{\text{sim}}}$
	\caption{The SVM-Monte Carlo policy evaluation method.}
	\label{fig: SVM-Monte Carlo evaluation method}
	\hrulefill
\end{figure}

\begin{figure}[H]
	\centering

	\includegraphics[width=0.8\linewidth]{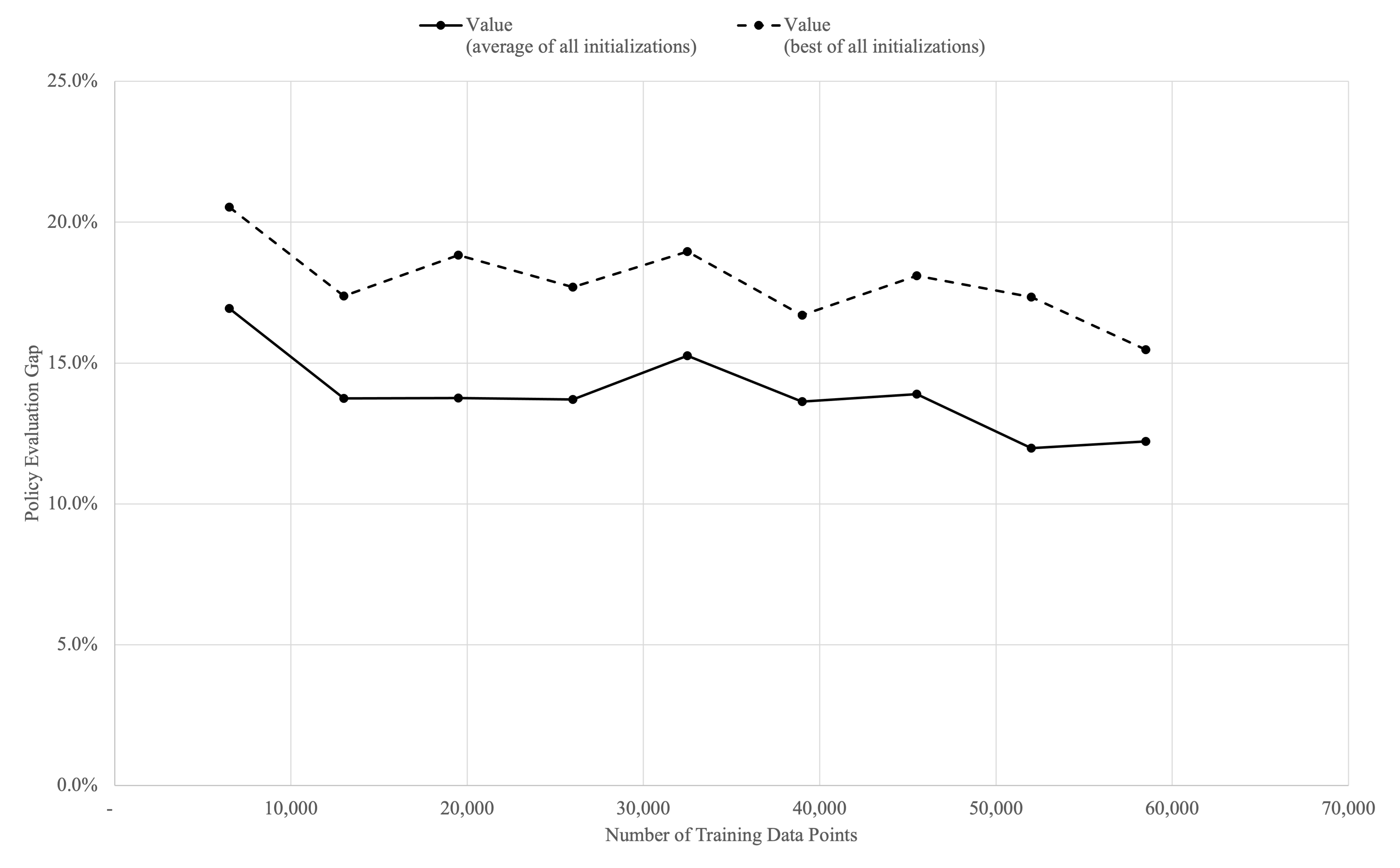}
	\caption{The optimality gap between the SVM-generated base stock policy under the true HMM demand model and under HMMs trained on synthetic data.}
	\label{fig: computational example graph}
	\hrulefill
\end{figure}

\newpage
\begin{appendices}

	\section{Relationship to Statistics and Machine Learning}
	\subsection{Markov Forecasting Models}
	\textbf{\emph{Brownian motion.}} We might consider modeling the auxiliary process (which we briefly assume to be univariate), $\{x_t: t \geq 0\}$, as standard Brownian motion, which satisfies the following two properties (\cite{Resnick92}, chapter 6): (1) $\{x_t: t \geq 0\}$ has independent increments and (2) $x_{t+1} - x_t \sim \mathcal{N}(0,1)$. In our notation, the conditional probability distribution is:
		\begin{equation*}
			\begin{split}
				P_{\mathscr{D}}[x', u', \theta' \vert x, u, \theta] &= P_{\mathscr{D}}[x' \vert x] \\
				&= \frac{1}{\sqrt{2 \pi}} e^{-\frac{1}{2} \left( x - x' \right)^2}.
			\end{split}
		\end{equation*}
	Standard Brownian motion satisfies the Markov assumption, by virtue of its independent increments, as do other examples of L\'evy processes. In fact, the Markov assumption holds for other generalizations built upon standard Brownian motion that are popular particularly in mathematical finance (we will discuss one such application from \cite{Zhou09}, later), for example, Brownian motion with drift, geometric Brownian motion (popularized by its use as a model of the underlying stock price process in the Black-Scholes model), and Ornstein-Uhlenbeck processes (\cite{Resnick92}, \cite{Zhou09}). For standard Brownian motion and these generalizations, there exist Markovian extensions in the case of a vector-valued auxiliary process, enabling modeling flexibility with correlated auxiliary data.
		
	\textbf{\emph{Autoregressive time series models.}} Autoregressive time series models are some of the more popular forecasting models used in practice for time series with regular and discrete time intervals (\cite{hyndman2008automatic}). For example, the modeler might assume that the auxiliary process is an ``autoregressive moving average" process, with parameters $p$ and $q$ (call this ARMA$(p,q)$) determining that for all $t$, $x_t$ is dependent upon the past $p$ realizations of the auxiliary data process, $x_{t-1}, \ldots, x_{t-p}$, and the average of the previous $q$ realizations of the noise process $\{ u_{t}: t \geq 0\}$:
		\begin{equation*}
			x_t = \theta^{\text{intercept}} + u_t + \sum_{j=1}^p \theta^{\text{AR}}_{j} x_{t-j} + \sum_{j=1}^q \theta^{\text{MA}}_j u_{t-j}, \quad u_{t} \stackrel{\text{i.i.d.}}{\sim} \mathcal{N}(0, \sigma_{u}^2).
		\end{equation*}
	Note that $x_t$ is Markovian with respect to the vector $(x_{t-1}, \ldots, x_{t - p}, u_{t-1}, \ldots, u_{t-q})$. Let $\tilde{x}_{t} = (x_{t}, \ldots, x_{t - p+1})$ be the previous $p$ observations of the auxiliary process before time $t$ and let $\tilde{u}_t = (u_{t}, \ldots, u_{t - q+1})$ be the previous $q$ observations of the $u$-process. Since the $u$-process is assumed to be i.i.d., $(\tilde{x}_t, \tilde{u}_t)$ satisfies the forecasting conditional probability of (\ref{eq: discriminative learning, plus forecasting conditional probabilities}) with fixed parameters $\theta = (\theta^{\text{intercept}}, \theta^{AR}_1, \ldots, \theta^{AR}_p, \theta^{MA}_1, \ldots, \theta^{MA}_q, \sigma_u)$:
		\begin{equation*}
			\begin{split}
				P_{\mathscr{D}}&[\tilde{x}_{t+1}, \tilde{u}_{t+1}, \theta_{t+1} \vert \tilde{x}_t, \tilde{u}_t, \theta_t] \\
				&= P_{\mathscr{D}}[x_{t+1}, \ldots, x_{t-p+2}, u_{t+1}, \ldots, u_{t-q+2} \vert x_{t}, \ldots, x_{t-p+1}, u_{t}, \ldots, u_{t-q+1}, \theta] \\
				&= P_{\mathscr{D}}[x_{t+1} = x' \vert u_{t+1}, x_{t}, \ldots, x_{t-p+1}, u_{t}, \ldots, u_{t-q+1}, \theta] P_{\mathscr{D}}[u_{t+1} \vert \theta],
			\end{split}
		\end{equation*}
	and the following marginal conditional distribution over $x_{t+1}$ is normally-distributed:
		\begin{equation*}
			\begin{split}
				P_{\mathscr{D}}[x_{t+1} &= x' \vert x_{t}, \ldots, x_{t-p+1}, u_{t}, \ldots, u_{t-q+1}, \theta] \\
				&= \frac{1}{\sigma_u \sqrt{2 \pi}} \exp \left\{ -\frac{1}{2} \left( \frac{x' - \theta^{\text{intercept}} - \sum_{j=1}^p \theta^{\text{AR}}_{j} x_{t-j} - \sum_{j=1}^q \theta^{\text{MA}}_j u_{t-j}}{\sigma_u} \right)^2 \right\}.
			\end{split}
		\end{equation*}
		
	This kind of autoregressive model can be extended to the case of a vector-valued auxiliary process in the \emph{vector autoregressive} (VAR) model (\cite{watson1994vector}).

	\section{Monotone Approximate Dynamic Programming}
	Consider the finite horizon MDP model considered in \cite{JiangPowell15}, which we succinctly describe by the following Bellman equation (and without loss of generality, we assume a \emph{minimization} formulation, to more easily facilitate comparison to our SEP-POMDP framework):
		\begin{equation*}
			\begin{split}
				v^*_t(s) &= \min_{a \in \mathcal{A}} \big\{ c_t(s, a) + \mathbb{E}\left[ v^*_{t+1} ( s_{t+1} ) \vert s_t = s, a_t = a \right] \big\}, \quad t = 0, 1, 2, \ldots, T-1 \\
				v_{T}(s) &= c_T(s),
			\end{split}
		\end{equation*}
	where the state transition dynamics are described by the stochastic function, $s_{t+1} = f(s_t, a_t, w_{t+1})$ and $\{w_t: t \geq 0\}$ is a stochastic process (which \cite{JiangPowell15} call the ``information process"), in a space $\mathscr{W}$, meant to capture the totality of the stochasticity in state dynamics. Now, we note that this MDP formulation corresponds to the MDP analogs of (\ref{eq: optimality equation of MDP analog}) (albeit with a description of state dynamics via a stochastic function, rather than the equivalent conditional probability specification). The only difference is the introduction of an affixed value of the observation process, $y'$:
		\begin{equation}
			\begin{split}
				v^*_t(s) &= \min_{a \in \mathcal{A}} \big\{ c_t(y', s, a) + \mathbb{E}\left[ v^*_{t+1} ( s_{t+1} ) \vert y', s_t = s, a_t = a \right] \big\}, \quad t = 0, 1, 2, \ldots, T-1 \\
				v_{T}(s) &= c_T(s),
			\end{split}
			\label{eq: JiangPowell MDP analog formulation}
		\end{equation}
		where $s_{t+1} = f(s_t, a_t, y', w_{t+1})$. We might consider $y'$ as introducing an observed component of the information process, which in the MDP analog formulation is affixed, but for the SEP-POMDP we permit to be a random variable that is useful for explaining (at least part) of the uncertainty captured by the information process, and for which we want to build a statistical learning model for describing, as in Section 5.
	
	\cite{JiangPowell15} are principally focused on MDPs for which the optimal value functions exhibit the following monotonicity property, for all $t$:
		\begin{equation}
			s \preceq \tilde{s} \Rightarrow v^*_t(s) \geq v^*_{t}(\tilde{s}), \quad \forall t = 0, 1, 2, \ldots, T \text{ and } s, \tilde{s} \in \mathcal{S},
			\label{eq: JiangPowell monotonicity property}
		\end{equation}
	where $\preceq$ is a component-wise partial order, such that when the state can be decomposed into $s = (m, j)$ (where $m$ is in a space $\mathscr{M}$ and $j$ in a space $\mathscr{J}$):
		\begin{equation*}
			s \preceq \tilde{s} \Leftrightarrow m \leq \tilde{m}, j = \tilde{j}.
		\end{equation*}
		
	They present a proposition with sufficient conditions under which the optimal value functions exhibit the monotonicity property (\ref{eq: JiangPowell monotonicity property}), that we include verbatim, below, with only trivial modifications to facilitate comparison to our MDP analog formulation (\ref{eq: JiangPowell MDP analog formulation}). We then demonstrate that the assumptions of this proposition guaranteeing monotone value functions for the MDPs in \cite{JiangPowell15} satisfies the conditions of Corollary \ref{cor: value function structure}, and thus the SEP-POMDPs that include statistical learning models for explaining the $y$-process (an observed component of the information process of \cite{JiangPowell15}) inherit this monotone value function structure.
	
	\begin{proposition}[\cite{JiangPowell15}, Proposition 1]	
		Suppose that every $s \in \mathcal{S}$ can be written as $s = (m, i)$ for some $m \in \mathscr{M}$ and $j \in \mathscr{J}$, and let $s_t = (m_t, j_t)$, be the state at time $t$, with $m_t \in \mathscr{M}$ and $j_t \in \mathscr{J}$. Assume:
			\begin{enumerate}[label = JP\arabic*.]
				\item For every $s, \tilde{s} \in \mathcal{S}$ with $s \preceq \tilde{s}$, $a \in \mathcal{A}$, and $w \in \mathscr{W}$, the state transition function satisfies $f(s, a, y', w) \preceq f(\tilde{s}, a, y', w)$,
				
				\item For each $t < T$, $s, \tilde{s} \in \mathcal{S}$, with $s \preceq \tilde{s}$, and $a \in \mathcal{A}$, $c_t(s, a) \geq c(\tilde{s}, a)$ and $c_T(s) \geq c_T(\tilde{s})$.
				
				\item For each $t < T$, $m_t$ and $w_{t+1}$ are independent.
			\end{enumerate}
		Then the value functions $v^*_t$ satisfy the monotonicity property (\ref{eq: JiangPowell monotonicity property}).
	\end{proposition}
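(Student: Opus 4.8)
The plan is to argue by backward induction on the epoch $t$, running from the terminal epoch $T$ down to $0$, that each $v^*_t$ is order-reversing with respect to $\preceq$, i.e.\ that $s \preceq \tilde{s} \Rightarrow v^*_t(s) \geq v^*_t(\tilde{s})$. The base case $t = T$ is immediate from assumption JP2, since $v_T = c_T$ and $c_T(s) \geq c_T(\tilde{s})$ whenever $s \preceq \tilde{s}$.

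For the inductive step, I would suppose the claim holds at $t+1$, fix a pair $s \preceq \tilde{s}$, and define for each feasible action $a$ the state--action value
\[
	Q_t(s, a) = c_t(y', s, a) + \mathbb{E}\left[ v^*_{t+1}\big( f(s, a, y', w_{t+1}) \big) \mid s_t = s, a_t = a \right].
\]
The goal is to establish $Q_t(s, a) \geq Q_t(\tilde{s}, a)$ for every fixed $a$, after which the inequality passes cleanly through the minimization: since $Q_t(s, a) \geq Q_t(\tilde{s}, a) \geq \min_{a'} Q_t(\tilde{s}, a') = v^*_t(\tilde{s})$ holds for all $a$, taking the minimum over $a$ on the left gives $v^*_t(s) \geq v^*_t(\tilde{s})$. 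The cost term is handled directly by JP2. The continuation term is the substance of the argument: JP1 makes the transition map monotone in the state for every fixed noise realization, so $f(s, a, y', w) \preceq f(\tilde{s}, a, y', w)$ pointwise in $w$, and the inductive hypothesis (that $v^*_{t+1}$ is order-reversing) then yields $v^*_{t+1}\big(f(s,a,y',w)\big) \geq v^*_{t+1}\big(f(\tilde{s},a,y',w)\big)$ for each realization $w$.

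The main obstacle is converting this pointwise-in-$w$ inequality into an inequality between the two \emph{expectations}, because a priori the conditional law of $w_{t+1}$ could differ at $s$ and at $\tilde{s}$. This is exactly where JP3 is needed. Writing $s = (m, j)$ and $\tilde{s} = (\tilde{m}, \tilde{j})$ with $m,\tilde m \in \mathscr{M}$ and $j,\tilde j \in \mathscr{J}$, the relation $s \preceq \tilde{s}$ forces $j = \tilde{j}$, so the two states share a common $\mathscr{J}$-component; and since JP3 makes $w_{t+1}$ independent of $m_t$, the conditional distribution of $w_{t+1}$ depends on the state only through that common component and is therefore identical at $s$ and at $\tilde{s}$. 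I would make this precise by a coupling, integrating both continuation functions against one common measure for $w_{t+1}$, under which the pointwise inequality survives integration and delivers $\mathbb{E}[v^*_{t+1}(f(s,a,y',w_{t+1}))] \geq \mathbb{E}[v^*_{t+1}(f(\tilde{s},a,y',w_{t+1}))]$. Adding the cost inequality gives $Q_t(s,a) \geq Q_t(\tilde{s}, a)$ for each $a$, which closes the induction and establishes the monotonicity property.
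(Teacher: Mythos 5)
Your proof is correct and is essentially the same argument as the source: the paper does not prove this proposition itself (it is quoted verbatim from \cite{JiangPowell15}), but the inductive core it reproduces when verifying condition B(a) in the proof of Proposition \ref{prop: JiangPowell SEP-POMDP inheritance} is exactly your argument --- JP1 gives the pointwise-in-$w$ monotonicity of the transition map, JP3 justifies conditioning on the shared component $j_t = \tilde{j}$ so that both continuation expectations are taken against one common law for $w_{t+1}$, and JP2 handles the cost and terminal terms. Your closing step, passing the inequality through the minimization over the state-independent action set $\mathcal{A}$, is the standard one and completes the backward induction without any gaps.
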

	
	We will prove the following inheritance proposition, proving SEP-POMDP inheritance of monotone optimal value function structure under conditions JP1-JP3.
	
	\begin{proposition}[SEP-POMDP inheritance under \cite{JiangPowell15} monotonicity conditions.]
		Suppose JP1, JP2, and JP3 hold. Then, for the SEP-POMDP $v^*_t$, for $t = 0, 1, 2 \ldots, T$, satisfies the monotonicity property for all $\mathbf{b} \in \mathcal{B}$. That is, for $s, \tilde{s} \in \mathcal{S}$:
			\begin{equation*}
				s \preceq \tilde{s} \Rightarrow v^*_t(s, \mathbf{b}) \geq v^*_{t}(\tilde{s}, \mathbf{b}), \quad \forall t = 0, 1, 2, \ldots, T, \forall \mathbf{b} \in \mathcal{B}.
			\end{equation*}
		\label{prop: JiangPowell SEP-POMDP inheritance}
	\end{proposition}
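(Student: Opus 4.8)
The plan is to instantiate the inheritance machinery of Section 4.1 with the structured space $\tilde V$ taken to be the set of functions on $\mathcal S$ that are nonincreasing with respect to the componentwise partial order $\preceq$, i.e. those satisfying the monotonicity property (\ref{eq: JiangPowell monotonicity property}). Monotonicity is a C3 property: for each pair $s \preceq \tilde s$ it is encoded by the single linear test $f(\tilde s) \le f(s)$, so the functions satisfying it form a closed convex cone in the topology of pointwise convergence, which immediately yields condition P(a). It then suffices to verify B(a) and B(b) from the hypotheses JP1--JP3 and invoke Corollary \ref{cor: value function structure} in its finite-horizon form, i.e. by backward induction over $t$ with base case $v^*_T(\cdot,\mathbf b) = c_T$, which is monotone by JP2. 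Equivalently, this establishes that each MDP analog $MDP_{y'}$ of (\ref{eq: optimality equation of MDP analog}) is a \cite{JiangPowell15} MDP meeting JP1--JP3, so that the inheritance property lifts its monotone value functions up to the SEP-POMDP.

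First I would check B(a). Fix $y' \in \mathcal Y$ and a monotone $\tilde v \in \tilde V$, and show that $h_{y'}(\cdot,\cdot,\tilde v)$ lies in $\tilde F$, the joint extension of monotonicity; concretely, for each fixed action $a$ I claim that $h_{y'}(s,a,\tilde v)$ is nonincreasing in $s$. The single-period cost term is handled directly by JP2. For the continuation term $\mathbb E[\tilde v(f(s,a,y',w))]$, take $s \preceq \tilde s$; JP1 gives $f(s,a,y',w) \preceq f(\tilde s,a,y',w)$ for every realization $w$, whence $\tilde v(f(s,a,y',w)) \ge \tilde v(f(\tilde s,a,y',w))$ pointwise in $w$ by monotonicity of $\tilde v$. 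The role of JP3 is precisely to make this pointwise inequality survive the expectation: because $s \preceq \tilde s$ forces the two states to share the common component $j$, and $w_{t+1}$ is independent of $m_t$, the two continuation values are integrated against the \emph{same} distribution of $w_{t+1}$, so the inequality is preserved under expectation. This step---correctly using the independence hypothesis JP3 so that the comparison is taken over a common noise distribution rather than two state-dependent ones---is the main obstacle.

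Given B(a) in the same-action form above, B(b) is routine: since the feasible action set $\mathcal A$ does not depend on the state in this model, for $s \preceq \tilde s$ and every $a$ we have $f(s,a) \ge f(\tilde s,a) \ge \min_{a'} f(\tilde s,a')$, and minimizing the left side over $a$ gives $\min_a f(s,a) \ge \min_a f(\tilde s,a)$, i.e. $\min_{\delta \in \Delta} f^\delta \in \tilde V$. Thus P(a), B(a), and B(b) all hold for the monotonicity structure, and Corollary \ref{cor: value function structure}, applied stagewise in the backward induction with the terminal stage supplied by JP2, yields $s \preceq \tilde s \Rightarrow v^*_t(s,\mathbf b) \ge v^*_t(\tilde s,\mathbf b)$ for every $t$ and every $\mathbf b \in \mathcal B$, which is the claim.
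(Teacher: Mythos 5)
Your proposal is correct and follows essentially the same route as the paper's proof: define $\tilde{V}$ as the monotone functions and $\tilde{F}$ as the functions monotone in $s$ for each fixed $a$, verify P(a) by closedness, B(a) from JP1--JP3 (with JP3 ensuring the pointwise inequality from JP1 and monotonicity is integrated against a common noise distribution), B(b) by preservation of monotonicity under minimization, and then invoke Corollary \ref{cor: value function structure}. The only cosmetic differences are that the paper justifies B(b) by citing the single-point-property result (Proposition 4) of \cite{Smith02} where you give a direct elementary minimization argument, and you make the finite-horizon backward induction with terminal case $v^*_T = c_T$ explicit where the paper leaves it implicit.
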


	\begin{proof}[Proof of Proposition \ref{prop: JiangPowell SEP-POMDP inheritance}]
		It suffices to show that JP1-JP3 imply P(a), B(a), and B(b). We begin by explicitly defining the structured functional spaces (implicit in \cite{JiangPowell15}):
			\begin{equation*}
			\begin{split}
				\tilde{V} &\triangleq \{v: \mathcal{S} \mapsto \mathbb{R}: v \text{ satisfies the monotonicity property (\ref{eq: JiangPowell monotonicity property})} \} \\
				\tilde{F} &\triangleq \{f: \mathcal{S} \times \mathcal{A} \mapsto \mathbb{R}: f(\cdot, a) \text{ satisfies the monotonicity property (\ref{eq: JiangPowell monotonicity property}) for all } a \in \mathcal{A} \}.
			\end{split}
			\end{equation*}
		The space of real-valued monotone functions is closed, so P(a) is satisfied. To show that B(b) holds, we utilize the results of \cite{Smith02} and note that the functions in $\tilde{F}$ satisfy a special kind of joint extension of a C3 property, $\mathscr{P}^*$, called \emph{single-point properties} (for minimization problems, \emph{e.g.} monotonicity, concavity in $\mathcal{S}$). Since the monotonicity property defining $\tilde{F}$ is a single-point property, it follows from \cite{Smith02} Proposition 4 that it is preserved under minimization. Finally, B(a) is satisfied by the following inductive argument (from \cite{JiangPowell15}) following from JP1 and JP3. Suppose $v^*_{t+1} \in \tilde{V}$ and $s, \tilde{s} \in \mathcal{S}$ such that $s \preceq \tilde{s}$:
			\begin{equation*}
				\begin{split}
					\mathbb{E} \left[ v^*_{t+1} f(s, a, y') \vert s_t = s, a_t = a, y' \right] &= \mathbb{E} \left[ f(s, a, y') \vert j_t = j, a_t = a, y' \right] \\
					&\geq \mathbb{E} \left[ f(\tilde{s}, a, y') \vert j_t = \tilde{j}, a_t = a, y' \right] \\
					&\geq \mathbb{E} \left[ f(\tilde{s}, a, y') \vert s_t = \tilde{s}, a_t = a, y' \right].
				\end{split}
			\end{equation*}
		Hence, $\mathbb{E} \left[ v^*_{t+1} \big( s_{t+1} \big) \vert \cdot, \cdot, y' \right] \in \tilde{F}$. By JP2, $c_t(\cdot, \cdot, y') \in \tilde{F}$. B(a) follows because $\tilde{F}$ is a convex cone.
	\end{proof}

	\section{Computational Example}
	\subsection{Proof of base stock optimality.}
	We will prove Proposition \ref{prop: base stock optimality} --- the optimality of a base stock policy for the single product inventory replenishment problem under procurement delays --- by showing how, since the problem can be formulated as a SEP-POMDP, it inherits this structure from an MDP analog. Rather than showing this directly, considering the context of an MDP analog inventory problem, we instead show conditions for SEP-POMDPs inheriting a more general myopic optimal policy structure from the MDPs considered in Theorem 1 of \cite{Sobel81}, and then demonstrate that our computational example satisfies the conditions for this myopic optimal policy structure.
	
	We will make use of the notion of \emph{separable} functions.
	\begin{definition} (separable function)
		A function $f: \mathcal{S} \times \mathcal{A} \mapsto \mathbb{R}$ is separable if there exists a function $K: \mathcal{A} \mapsto \mathbb{R}$ and a function $L: \mathcal{S} \mapsto \mathbb{R}$ such that $f(s,a) = L(s) + K(a) $.
	\end{definition}
	Note that the space of separable functions is a convex cone. That is, suppose we have two separable functions, $f$ and $g$, which map $\mathcal{S} \times \mathcal{A}$ to $\mathbb{R}$, and conic weights $\alpha, \beta \geq 0$. Clearly,
		\begin{equation*}
			\alpha f(s,a) + \beta g(s, a) = \alpha K_f(a) + \beta K_g(a) + \alpha L_f(s) + \beta L_g(s).
		\end{equation*}
	
	Now, we prove conditions for the optimality of myopic policies for the SEP-POMDP, that are inherited from the MDPs of \cite{Sobel81}, and we assume the spaces $\mathcal{S}$, $\mathcal{Y}$, $\mathcal{M}$, and $\mathcal{A}$ are discrete.
	\begin{proposition}[Myopic optimal polices.] Suppose the following:
		\begin{enumerate}[label = (\roman*)]
			\item $\exists K: \mathcal{A} \times \mathcal{Y} \mapsto \mathbb{R}, L: \mathcal{S} \times \mathcal{Y} \mapsto \mathbb{R}$ such that $c(s, y', a) = K(y', a) + L(s, y')$, for all $y' \in \mathcal{Y}$, $s \in \mathcal{S}$, $a \in \mathcal{A}$
			
			\item $p(\cdot \vert y', s, a)$ is independent of $s$ (and so we express as $p(\cdot \vert y', a)$), for all $y' \in \mathcal{Y}$, $a \in \mathcal{A}$
			
			\item $a^*(\mathbf{b})	\in 	\argmin\limits_{a \in \mathcal{A}} \{ G(\mathbf{b},a )\}$, where
			\begin{equation*}
				G(\mathbf{b}, a)		=		\sum_{y'} \sigma(y' \vert \mathbf{b}) \left[ K(y', a) + \beta \sum_{y''} \sigma \big(y'' \vert \lambda(y',\mathbf{b}) \big) \sum_{s'} 
			p(s' \vert y', a) L(s', y'') \right]
			\end{equation*}
			
			\item $a^*(\mathbf{b}_t)$ is feasible for all $t$
		\end{enumerate}
		
		Then, the stationary deterministic policy $\pi^*(s, \mathbf{b}) = a^*(\mathbf{b})$ for all $s \in \mathcal{S}, \mathbf{b} \in \mathcal{B}$ is optimal.
		\label{Myopic}
	\end{proposition}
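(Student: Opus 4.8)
The plan is to prove optimality by exhibiting the fixed point of $H$ explicitly and checking that the myopic rule $\pi^*(s,\mathbf{b})=a^*(\mathbf{b})$ attains the minimum in $v=Hv$, then invoking uniqueness of the fixed point of the contraction $H$. Conditions (i) and (ii) are exactly the SEP-POMDP translation of the hypotheses of Theorem 1 of \cite{Sobel81}, so conceptually this is an inheritance-of-myopic-structure result for the MDP analogs; I will make it self-contained through a direct verification rather than routing through the C3/joint-extension machinery, since the relevant feature here is a state-independent action rather than a value-function shape.

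First I would introduce the expected one-period state cost $\bar{L}(s,\mathbf{b})=\sum_{y'}\sigma(y'\vert\mathbf{b})L(s,y')$ and conjecture that the optimal value function takes the additively separated form $v^*(s,\mathbf{b})=\bar{L}(s,\mathbf{b})+w^*(\mathbf{b})$, where $w^*$ depends on the belief alone. To motivate this, substitute a trial function $v(s,\mathbf{b})=\bar{L}(s,\mathbf{b})+w(\mathbf{b})$ into the discrete operator
\[ Hv(s,\mathbf{b}) = \min_{a\in\mathcal{A}(s)} \sum_{y'}\sigma(y'\vert\mathbf{b}) \Big[ c(s,y',a) + \beta \sum_{s'} p(s'\vert y', s, a)\, v\big(s', \lambda(y',\mathbf{b})\big) \Big]. \]
Splitting $c=K+L$ via (i) and discarding the $s$-dependence of $p$ via (ii) produces three collapses: the piece $\sum_{y'}\sigma(y'\vert\mathbf{b})L(s,y')=\bar{L}(s,\mathbf{b})$ carries no $a$ and factors out; the piece $\beta\sum_{y'}\sigma(y'\vert\mathbf{b})\sum_{s'}p(s'\vert y',a)\,w(\lambda(y',\mathbf{b}))$ reduces to $\beta\sum_{y'}\sigma(y'\vert\mathbf{b})\,w(\lambda(y',\mathbf{b}))$ because $w$ is state-free and $p(\cdot\vert y',a)$ sums to one, again carrying no $a$; and the remaining $a$-dependent expression is precisely $G(\mathbf{b},a)$ once one recognizes $\sum_{y''}\sigma(y''\vert\lambda(y',\mathbf{b}))L(s',y'')$ inside the definition of $G$. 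Hence
\[ Hv(s,\mathbf{b}) = \bar{L}(s,\mathbf{b}) + \beta\sum_{y'}\sigma(y'\vert\mathbf{b})\,w(\lambda(y',\mathbf{b})) + \min_{a} G(\mathbf{b},a). \]
The structural crux, which I would emphasize, is that the only $a$-dependent term $G(\mathbf{b},a)$ does not depend on $s$: each $h_{y'}(\cdot,\cdot,\tilde v)$ is separable in $(s,a)$ under (i)--(ii), the $\sigma$-average over $y'$ is a conic combination of separable functions and therefore separable (the convex-cone remark above), and a separable function has a minimizer over $a$ that is independent of $s$.

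Matching $Hv=v$ on the belief-only part then reduces the entire problem to the scalar fixed-point requirement $w(\mathbf{b})=\min_a G(\mathbf{b},a)+\beta\sum_{y'}\sigma(y'\vert\mathbf{b})\,w(\lambda(y',\mathbf{b}))$. I would argue that the map $w\mapsto \min_a G(\mathbf{b},a)+\beta\,\mathbb{E}[w(\lambda(y',\mathbf{b}))\vert\mathbf{b}]$ is a $\beta$-contraction on the bounded functions over $\mathcal{B}$, so by the Banach fixed point theorem it has a unique bounded fixed point $w^*$; boundedness of $\min_a G$ follows from boundedness of $c$ together with a bounded choice of the decomposition $c=K+L$. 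By (iii) the minimizer of $G(\mathbf{b},\cdot)$ may be taken to be $a^*(\mathbf{b})$, and by (iv) this minimizer lies in $\mathcal{A}(s_t)$ along the belief trajectory, so the constrained minimization in $H$ coincides with the unconstrained one in the definition of $a^*$ and the minimum is genuinely attained at $a^*(\mathbf{b})$, independent of $s$.

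Finally I would assemble the pieces: $v^*(s,\mathbf{b})=\bar{L}(s,\mathbf{b})+w^*(\mathbf{b})$ is bounded and satisfies $v^*=Hv^*$ with $a^*(\mathbf{b})$ attaining the minimum, so by uniqueness of the fixed point of $H$ (the contraction result cited after the general optimality equation) it is the optimal value function and $\pi^*(s,\mathbf{b})=a^*(\mathbf{b})$ is an optimal stationary deterministic policy. I expect the main obstacle to be bookkeeping rather than anything conceptual: one must verify carefully that minimizing $G$ over all of $\mathcal{A}$ as in (iii) agrees with the constrained minimization over $\mathcal{A}(s)$ inside $H$, which is exactly the role of (iv), and one must ensure the separation $c=K+L$ can be taken with $K,L$ bounded so that $\bar{L}$ and $w^*$ live in the Banach space $V$. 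Everything else is the algebraic collapse enabled by (i)--(ii), which is precisely why a state-independent, one-step-lookahead rule is optimal.
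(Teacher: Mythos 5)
Your proof is essentially correct, but it takes a genuinely different route from the paper's. The paper argues in two stages: first it verifies the inheritance conditions P(a), B(a), B(b), B(c) with $\tilde{F}$ taken to be the convex cone of separable functions $f(s,a)=K(a)+L(s)$ and $\tilde{\Pi}$ the constant-in-$s$ policies, and invokes Proposition \ref{prop: SEP-POMDP inheritance} to conclude that \emph{some} state-invariant stationary policy $\pi^*(s,\mathbf{b})=a(\mathbf{b})$ is optimal; second, to pin down that this policy is the \emph{myopic} minimizer of $G$, it performs a Sobel-style regrouping of the value of an arbitrary policy, $v^{\pi}(s_0,\mathbf{b}_0)=L(s_0,\mathbf{b}_0)+\mathbb{E}\big[\sum_{t}\beta^t G(\mathbf{b}_t,a_t)\,\vert\, s_0,\mathbf{b}_0\big]$, and then minimizes term-by-term, which is legitimate precisely because the belief process $\{\mathbf{b}_t\}$ is control-invariant under separability. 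You instead bypass the inheritance machinery entirely: you guess the additively separated value function $v^*(s,\mathbf{b})=\bar{L}(s,\mathbf{b})+w^*(\mathbf{b})$, verify the algebraic collapse of $H$ under (i)--(ii), obtain $w^*$ from a Banach fixed-point argument on $\mathcal{B}$ alone, and conclude by uniqueness of the fixed point of $H$. Your route is more self-contained and yields the explicit form of $v^*$ as a by-product, which the paper's proof never exhibits; the paper's route, in turn, showcases its general inheritance framework and handles the feasibility hypothesis (iv) more gracefully. That last point is the one place you should be careful: your verification $Hv=v$ requires $\min_{a\in\mathcal{A}(s)}$ to agree with $\min_{a\in\mathcal{A}}$ at \emph{every} pair $(s,\mathbf{b})$, i.e.\ the global feasibility $a^*(\mathbf{b})\in\mathcal{A}(s)$ for all $(s,\mathbf{b})$, whereas (iv) as written ($a^*(\mathbf{b}_t)$ feasible for all $t$) is a statement along realized trajectories, and the paper's term-by-term bound inside the expectation only needs that weaker form. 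Since the proposition's conclusion --- that $\pi^*(s,\mathbf{b})=a^*(\mathbf{b})$ for \emph{all} $(s,\mathbf{b})$ is a feasible policy --- itself presupposes the global reading, your interpretation is defensible, but you should state explicitly that you are using it, or else replace the pointwise fixed-point verification with the paper's trajectory argument (bound $v^{\pi}$ below by $L(s_0,\mathbf{b}_0)+\mathbb{E}\big[\sum_t \beta^t G(\mathbf{b}_t,a^*(\mathbf{b}_t))\big]$ using exogeneity of $\{\mathbf{b}_t\}$, then check the bound is attained by $\pi^*$) in the case of state-dependent action sets.
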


	\begin{proof}[Proof of Proposition \ref{Myopic}]
		Suppose $v(\cdot, \mathbf{b}) \in \tilde{V}$ for all $\mathbf{b} \in \mathcal{B}$. We begin by defining the following structured function spaces:
		\begin{align*}
		\tilde{\Pi}	&\triangleq		\{	\tilde{\pi}		:	\exists a \in \mathcal{A}: 	\tilde{\pi}(s) = a,	\forall s \in \mathcal{S}	\}		\\
		\tilde{V} 	&\triangleq		V	\\
		\tilde{C} 	&\triangleq		\{	\tilde{c}			:	\exists K: \mathcal{A} \mapsto \mathbb{R}, L: \mathcal{S} \mapsto \mathbb{R}:		\tilde{c}(s,a) = K(a) + L(s)	\}				\\
		\tilde{P} 	&\triangleq		\{	\tilde{p}			:	\tilde{p}(\cdot \vert s, a) = \tilde{p}(\cdot \vert a)	\}				\\
		\tilde{F}	&\triangleq 	\{	f			:	\exists K: \mathcal{A} \mapsto \mathbb{R}, L: \mathcal{S} \mapsto \mathbb{R}:			{f}(s,a) = K(a) + L(s)	\}.			
		\end{align*}
		We want to show that there exists a set $\{ a(\mathbf{b}): \mathbf{b} \in \mathcal{B}\}$ such that $\pi^*(s, \mathbf{b}) = a(\mathbf{b})$ for all $(s, \mathbf{b}) \in \mathcal{S} \times \mathcal{B}$ is stationary optimal by showing that P(a), B(a), B(b), and B(c) hold. 
		
		P(a) holds trivially. We aim to show B(a) holds. Suppose $\tilde{v} \in \tilde{V}$. Observe that (i) and (ii) are equivalent to $p(\cdot \vert y', \cdot, \cdot) \in \tilde{P}$ for all $y' \in \mathcal{Y}$ and $c(\cdot, y', \cdot) \in \tilde{C}$ for all $y' \in \mathcal{Y}$, which imply that
		\begin{align*}
		h_{y'}(s, a, \tilde{v})		&=	c(s, y', a)	+	\beta \sum_{s' \in \mathcal{S}} p(s' \vert y', s, a) \tilde{v}(s')		\\
		&=	K(y', a) + L(s, y')	+ 	\beta \sum_{s' \in \mathcal{S}} p(s' \vert y', a) L(s') \in \tilde{F}, \text{ for all } y' \in \mathcal{Y}.
		\end{align*}
		B(b) trivially holds. Further, separable functions when minimized yield state-invariant optimal policies (maximizing $L(s) + K(a)$ over $a$ is equivalent to minimizing $K(a)$ over $a$ for all $s$). So B(c) holds. By Proposition \ref{prop: SEP-POMDP inheritance} we conclude that there exists a set $\{ a(\mathbf{b}): \mathbf{b} \in \mathcal{B}\}$ such that $\pi^*(s, \mathbf{b}) = a(\mathbf{b})$ for all $(s, \mathbf{b}) \in \mathcal{S} \times \mathcal{B}$ is stationary optimal.
		
		It remains to show that $\pi^*(s, \mathbf{b}) = a^*(\mathbf{b})$ for all $s \in \mathcal{S}$, the myopic minimizer of the function $G(\mathbf{b}, a)$. An inductive argument, which follows along the lines of the proof given in \cite{Sobel81} proves this result.
		
		Let $L(s, \mathbf{b}) = \mathbb{E}[ L(s, y') \vert \mathbf{b} ]$ and $K(\mathbf{b}, a) = \mathbb{E}[ K(y', a) \vert \mathbf{b}]$. The value function of the SEP-POMDP, under any policy $\pi$ is defined as follows, where $\mathbf{b}_{t+1} = \lambda(z_{t+1}, \mathbf{b}_t)$ and $a_t = \pi(s_t, \mathbf{b}_t)$:
			\begin{align}
				v^{\pi}(s_0, \mathbf{b}_0) 	&=	 E \bigg[ \sum_{t=0}^{\infty} \beta^t
				c(s_t, y_{t+1}, a_t) \vert s_0, \mathbf{b}_0 \bigg] \\
				&=	 E \bigg[ \sum_{t=0}^{\infty} \beta^t
				\left[  K(\mathbf{b}_t, a_t) + L(s_t, \mathbf{b}_t) \right]
				\vert s_0, \mathbf{b}_0 \bigg],
			\end{align}
		and where (3) follows from application of assumption (a). From assumption (b), $s_{t+1} \sim \gamma(a_t, y_{t+1})$, where $\gamma$ is a random variable depending only on $a_t$ and $y_{t+1}$. Then,
			\begin{align*}
				v^{\pi}(s_0, \mathbf{b}_0) 	&= 	\mathbb{E} \bigg[ \sum_{t=0}^{\infty} \beta^t \left[ K(\mathbf{b}_t, a_t) + 		
				L(s_t, \mathbf{b}_t)  \right] \vert s_0, \mathbf{b}_0 \bigg]		\\
				&=	 K(\mathbf{b}_0,a_0) + L(s_0, \mathbf{b}_0)  + 
				\mathbb{E} \bigg[ \sum_{t=1}^{\infty} \beta^t \big[
				K(\mathbf{b}_t, a_t) + L \big( \gamma(a_{t-1}, y_t), \mathbf{b}_t \big) \big]
				\vert s_0, \mathbf{b}_0 \bigg]	\\
				&= 	L(s_0, \mathbf{b}_0) + \mathbb{E} \bigg[ \sum_{t=0}^{\infty} \beta^t
				\big[ K(\mathbf{b}_t, a_t) +
				\beta L \big( \gamma(a_t, y_{t+1}), \mathbf{b}_{t+1} \big)  \big]
				\vert s_0, \mathbf{b}_0 \bigg]	\\
				&= 	L(s_0, \mathbf{b}_0) + \mathbb{E} \bigg[ \sum_{t=0}^{\infty} \beta^t
				\big[  K(\mathbf{b}_t, a_t) + \beta
				L \big( \gamma(a_t, y_{t+1}), \lambda(y_{t+1}, \mathbf{b}_t) \big)
				\big] \vert s_0, \mathbf{b}_0 \bigg]	\\
				&= 	L(s_0, \mathbf{b}_0) + \mathbb{E} \bigg[ \sum_{t=0}^{\infty} \beta^t
				\bigg[  K(\mathbf{b}_t, a_t) + \beta \sum_{y''} \sigma \big(y'' \vert \lambda(y_{t+1},\mathbf{b}_t) \big) \sum_{s'} 
				p(s' \vert y_{t+1}, a_t) L(s', y'')
				\bigg]
				\vert s_0, \mathbf{b}_0 \bigg]	\\
				&= 	L(s_0, \mathbf{b}_0) + \mathbb{E} \bigg[  \sum_{t=0}^{\infty} \beta^t 
				G(\mathbf{b}_t, a_t) \vert s_0, \mathbf{b}_0 \bigg]	\\
				&\geq L(s_0, \mathbf{b}_0) + \mathbb{E} \bigg[ \sum_{t=0}^{\infty} \beta^t 
				G(\mathbf{b}_t, a^*(\mathbf{b}_t)) \vert s_0, \mathbf{b}_0 \bigg].
			\end{align*}
		We conclude that the policy $\pi^*(s,\mathbf{b}) = a^*(\mathbf{b})$ for all $s \in \mathcal{S}$, $\mathbf{b} \in \mathcal{B}$ is stationary and optimal. 
	\end{proof}

	Now, we can prove the optimality of the base stock policy in Proposition \ref{prop: base stock optimality} by showing that it satisfies the conditions of Proposition \ref{Myopic} as a myopic optimal policy.
	
	\begin{proof}[Proof of Proposition \ref{prop: base stock optimality}.]
		We go case-by-case through the assumptions of Proposition \ref{Myopic}.
		\begin{enumerate}[label = (\roman*)]
			\item The cost function, $c$, for our inventory example comes from the following:
				\begin{equation*}
					\begin{split}
						\mathbb{E} \bigg[  \tilde{h} \left(\tilde{a} - \sum_{j=1}^{\tau} y_j \right)^+ &+ \tilde{p} \left(\sum_{j=1}^{\tau} y_j  - \tilde{a}\right)^+ \vert \mathbf{b} \bigg] \\
						&= \sum_{y_1, x'} \sigma(y_1, x' \vert \mathbf{b}) \underbrace{\mathbb{E}  \left[  \tilde{h} \left(\tilde{a} - \sum_{j=1}^{\tau} y_j \right)^+ + \tilde{p} \left(\sum_{j=1}^{\tau} y_j  - \tilde{a}\right)^+ \vert \mathbf{b}, y_1 \right] }_{\text{SEP-POMDP cost function, $c$}}.
					\end{split}
				\end{equation*}
				From this, we can see that $c$ is a function only of $\tilde{a}$ ($a$ in Proposition \ref{Myopic}) and the subsequent demand $y_1$ ($y'$ in Proposition \ref{Myopic}), and thus $(i)$ is satisfied with $c = K$.
			
			\item In the inventory position formulation, the dynamics of the inventory position are defined by the stochastic difference equation, $\tilde{s}_{t+1} = \tilde{a}_t - y_{t+1}$, and do not depend on $\tilde{s}_t$.
			
			\item This is the definition of the base stock levels in Equation \ref{eq: base stock policy definition}, where $c=K$ from $(i)$, above.
			
			\item This condition is guaranteed by the attainability condition of Proposition \ref{prop: base stock optimality}, namely: $a^*(\mathbf{b}) - y' \leq a^* \big( \lambda(y', x', \mathbf{b}) \big)$ for all $y', x', \mathbf{b}$.
		\end{enumerate} 
	Since $(i)-(iv)$ of Proposition \ref{Myopic} are satisfied, the base stock (myopic) policy defined by Equation \ref{eq: base stock policy definition} is optimal.
	\end{proof}

	\section{Computational Tractability}
	
	POMDPs are notoriously difficult to solve for other than small instances due to the fact that the belief space $\mathcal{B}$ contains an uncountably infinite number of possible belief vectors. There have been various approaches in the literature that seek to overcome the tractability issue of the POMDP. In this appendix we discuss additional types of solution procedures for POMDPs --- belief trajectory simulation, exact, information relaxation, and online heuristics --- and give examples of how the specialized structural properties of the SEP-POMDP can be utilized within these frameworks to solve (or approximately solve) SEP-POMDPs.

	\subsection{Belief Trajectory Simulation Methods}
	Belief trajectory simulation methods are based upon the intuition that, for many problems, there are only a small subset of beliefs that are \textit{reachable} under an optimal policy. Various approaches in the literature successively build a grid on $\mathcal{B}$ by alternating at each epoch between sampling new beliefs and performing value iteration operations on the new belief states (\cite{Pineau03}, \cite{Spaan05}).
	
	Here we present an $\textit{a priori}$ belief trajectory simulation method for constructing a discrete grid approximation, $\mathcal{B}' \subset \mathcal{B}$, which utilizes the \textit{actual dynamics} of the modulation and observation processes, while alleviating the computational burden associated with past approaches for the generalized POMDP due to the fact that learning in SEP-POMDPs is \textit{passive} and independent of control. This method turns solving the SEP-POMDP into solving a completely-observed MDP with state space $\mathcal{S} \times \mathcal{B}'$. 
	
	Suppose we have a metric space $(\mathcal{B}, \norm{\cdot})$, where $\norm{\cdot}$ is the sup-norm and $\mathcal{B}$ is the belief space. Let $\mathcal{B}_d \triangleq \left\{ \mathbf{b} \in \mathcal{B}: \exists \mathbf{b}' \in \mathcal{B}: \mathbf{b} = \frac{\lfloor \mathbf{b}' \cdot 10^d \rfloor}{10^d} \right\}$, the grid of points in $\mathcal{B}$ rounded to the $d$-th digit. Note that $\mathcal{B}_d  \subset \mathcal{B}$. We detail the so-called $\mathcal{B}'$ solution procedure for SEP-POMDPs.

	\begin{figure}[h]
		\hrulefill
		\centering
		\begin{enumerate}[start = 0]
			\item \textit{Initialization.} Initialize belief distribution, modulation state, number of simulation runs, mesh parameter, and cardinality parameter --- $\mathbf{b}_0$, $\mu_0$, $N$, $d$, and $K$ respectively.
			
			\item \textit{Belief simulation.} Generate, according to $P[y', \mu \vert \mu]$ the sequences $\{y_t, t = 1, \ldots, N\}$ and $\{\mu_t, t = 0, \ldots, N \}$. Then compute recursively $\{\mathbf{b}_t, t = 1, \ldots, N\}$ such that $\mathbf{b}_{t+1} = \lambda(y_{t+1}, \mathbf{b}_t)$ for $t = 0, \ldots, N-1$.
			
			\item \textit{$\mathcal{B}'$ definition.} Let $\tilde{\mathbf{b}}_t$ be $\mathbf{b}_t$ rounded to the $d$-th digit and let $\mathcal{B}' \triangleq \bigcup_{i=1}^K \tilde{\mathbf{b}}_{(i)}$, the $K$-th most frequently visited balls of radius $10^{-d}$ in $\mathcal{B}$.
			
			\item \textit{Solving the MDP with state space $\mathcal{S} \times \mathcal{B}'$.}  Solve the modified completely observed MDP with optimality equation
			\[\hat{v}(s, \mathbf{b}) = \min_{a \in \mathcal{A}(s)} \sum_{y'} \sigma(y' \vert \mathbf{b}) \bigg[ c(s, y', a) + \beta \sum_{s'} p(s' \vert y', s, a) \hat{v} \big( s', \mathbf{b}'(y', \mathbf{b}) \big) \bigg], \]
			where $\mathbf{b}'(y', \mathbf{b}) \approx \lambda(y', \mathbf{b})$ and $\mathbf{b}'(y', \mathbf{b}) \in \mathcal{B}'$.
		\end{enumerate}
		\caption{The $\mathcal{B}'$ method.}
		\hrulefill
	\end{figure}
	
	In step 0, we initialize the solution procedure. We note that $d$ should be a positive integer and controls the fineness of the grid. The cardinality parameter, $K$, determines how many points will be included in the approximate grid.
	
	In step 1, we simulate a trajectory of the beliefs by simulating the evolution of observations and modulation states according to the underlying Markov chain governing the dynamics, and recursively performing the belief update operations according to these observations and modulation states. So long as the Markov chain for the modulation states is ergodic, simulating one long trajectory should be sufficient for approximating a steady state distribution of modulation states. We note that this step is simulating a \textit{passive} learning environment since the belief updates are independent of control under the SEP-POMDP conditioning assumptions, guaranteeing that the learning operation for SEP-POMDPs is computationally tractable.
	
	In step 2, we determine $\{\tilde{\mathbf{b}}_t, t = 0, \ldots, N\}$, the set of simulated belief states rounded to the $d$-th digit, so that $\tilde{\mathbf{b}}_t$ is the unique point in $\mathcal{B}_d$ such that $\mathbf{b}_t$ is within a ball of radius $10^{-d}$ of $\tilde{\mathbf{b}}_t$. Let $\tilde{\mathcal{B}} = \bigcup_{t=1}^N \tilde{\mathbf{b}}_t$. (Note that $\tilde{\mathcal{B}} \subset \mathcal{B}_d$.) There is a complete order on $\tilde{\mathcal{B}}$ induced by the binary operator, $\preceq$, defined so that
	\[	\tilde{\mathbf{b}}_{(i) } \preceq \tilde{\mathbf{b}}_{(j) }	\Leftrightarrow i < j \text{ and } \sum_{t=1}^N \mathbf{1} \left\{ \norm{x_t - \tilde{\mathbf{b}}_{(i)}} \leq 10^{-d} \right\} \leq \sum_{t=1}^N \mathbf{1}\left\{ \norm{\mathbf{b}_t - \tilde{\mathbf{b}}_{(j)}} \leq 10^{-d} \right\}.	\]
	This order counts the number of simulated beliefs that are rounded to a particular $\tilde{\mathbf{b}}$ and ranks them. We then define $\mathcal{B}'$ to be the $K$-th most frequently visited rounded beliefs (Note that $\mathcal{B}' \subset \tilde{\mathcal{B}} \subset \mathcal{B}_d \subset \mathcal{B}$). Of course, $\mathcal{B}'$ has cardinality $K$, so it is finite in dimension.
	
	Finally, in step 3 we are left with the SEP-POMDP optimality equation, below
	\[v(s, \mathbf{b}) = \min_{a \in \mathcal{A}(s)} \sum_{y'} \sigma(y' \vert \mathbf{b}) \bigg[ c(s, y', a) + \beta \sum_{s'} p(s' \vert y', s, a) v \big( s', \lambda(y', \mathbf{b}) \big) \bigg], 	\quad \forall (s, \mathbf{b}) \in \mathcal{S} \times \mathcal{B}'. \]
	Our remaining challenge is that $\lambda(y', x)$ may not be in $\mathcal{B}'$ for a given $(y', x)$. Suppose $x \in \mathcal{B}'$. The hope is that $\exists \mathbf{b}'(y', \mathbf{b}) \in \mathcal{B}'$ such that $\lambda(y', \mathbf{b}) \approx \mathbf{b}'(y', \mathbf{b})$, and that $v(\cdot, \lambda(y',\mathbf{b})) \approx v(\cdot, \mathbf{b}'(y',\mathbf{b}))$. These assumptions may not hold if either $\lambda(y', \mathbf{b})$ is not near any point in $\mathcal{B}'$ (although intuitively, in most cases, it should be since we chose $\mathcal{B}'$ on the basis of frequently visited belief vectors in our simulation), or if $\lambda(y', \mathbf{b})$ is near a facet of the Sondik regions of $\mathcal{B}$, so that $v(\cdot, x'(y',x))$ is not a good approximation to $v(\cdot, \lambda(y',\mathbf{b}))$. There are many ways we could define $\mathbf{b}'(y', \mathbf{b})$, such as $\mathbf{b}'(y', \mathbf{b}) \triangleq \argmin_{\mathbf{b}' \in \mathcal{B}'} \{ \norm{\mathbf{b}' - \lambda(y', \mathbf{b})}\}$. 
	
	This creates a well-defined MDP, with state space $\mathcal{S} \times \mathcal{B}'$, which serves as our approximate model for the SEP-POMDP. The benefits of this method is that we reduce drastically the number of possible belief states that we need to consider in the SEP-POMDP by using the \textit{actual dynamics} of the system, which makes it better-suited than uniform or random grid methods for each particular problem instance (\cite{Lovejoy91}, \cite{Hauskrecht00}).

	\subsection{Exact Methods}
	Exact methods are based upon value iteration and seek to solve the POMDP exactly by utilizing the piecewise linear and concave structure of the value function with respect to $\mathbf{b}$ to construct the defining facets of the value function. \cite{Sondik78} and \cite{Sondik73} were the first to take this approach in their seminal papers. \cite{Kaelbling98} improved upon the complexity of this approach by using linear programming to construct the facet vectors. For the SEP-POMDP this structural result implies that if there is a finite set of vectors $\Gamma(s)$ for all $s$ such that $v(s,\mathbf{b}) = \min \{\mathbf{b} \gamma: \gamma \in \Gamma(s)\}$, then there is a finite set $\Gamma'(s)$ for all $s$ such that $Hv(s,\mathbf{b}) = \min\{\mathbf{b}\gamma: \gamma \in \Gamma'(s)\}$ and that in the limit, the fixed point of $H$, $v^*$, is concave in $x$ for all $s$. In analogy to computational procedures that make use of this structural characteristic for the POMDP, the process of constructing $\{\Gamma'(s)\}$ for the SEP-POMDP involves an intermediate step, the determination of the sets $\{\Gamma'(s,a)\}$ such that $\min \{\mathbf{b} \gamma: \gamma \in \Gamma'(s,a)\} = \sum_{y'} \sigma(y' \vert \mathbf{b}) h_{y'} \big( s, a, v(\cdot, \lambda(y',\mathbf{b})) \big)$. The computational implications of the inheritance property vary as a function of the structure under consideration and mirror the computational implications of this structure for the MDP analogs. For example, assume the MDP analogs are such that for each $y'$, there exists an optimal policy that is monotone in $s$. Then, for each $x$, there exists an optimal policy $\delta^*(s,\mathbf{b})$ such that if $s \leq s'$, then $\delta^*(s,\mathbf{b}) \leq \delta^*(s', \mathbf{b})$. It is therefore unnecessary to construct $\Gamma'(s',a)$ for all $a < \min \{\delta^*(s,\mathbf{b}): \mathbf{b} \in \mathcal{B}\}$.

	\subsection{Information Relaxation and Upper and Lower Bounds}
	Another common method for approximately solving stochastic dynamic programs is via information relaxation, as in \cite{Brown10}. We give a natural information relaxation-based heuristic here that is based on a relaxation of the partial-observability of the modulation process and can generate both upper and lower bounds on $v^*$. Suppose we want to minimize the expected total discounted cost, where at each decision epoch the DM has available the information as in the SEP-POMDP, $\mathscr{I}_t$, but also knowledge of the modulation states $\{\mu_t, \ldots, \mu_1\}$. Feasible policies map $\mathscr{I}_t \cup \{\mu_t, \ldots, \mu_1\}$ into feasible actions at all epochs $t$. The DM is faced with a MDP defined by the operator $H_M: V_M \mapsto V_M$, where $V_M$ is the space of bounded real-valued functions on $\mathcal{S} \times \mathcal{M}$,
	\begin{equation*}
	H_{M}v(s, \mu) 	= 	\min_{a \in \mathcal{A}(s)} \sum_{y', \mu'} P[y', \mu' \vert \mu] \left[ c(s, y', a) + \beta \sum_{s'}p(s' \vert y', s, a) v(s', \mu') \right].
	\end{equation*}
	
	In the following proposition, we show that the fixed point of $H_M$ can be used to determine a lower bound on $v^*$.
	
	\begin{proposition}
		$\sum_{\mu} x(\mu)v_M(s, \mu) \leq v^*(s, \mathbf{b})$ for all $(s, \mathbf{b}) \in \mathcal{S} \times \mathcal{B}$, where $v_M = H_M v_M$ and $v^* = Hv^*$.
		\label{prop: information relaxation}
	\end{proposition}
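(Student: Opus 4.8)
The plan is to exhibit the belief-averaged function $g(s,\mathbf{b}) \triangleq \sum_{\mu} \mathbf{b}(\mu) v_M(s,\mu)$ as a sub-solution of the SEP-POMDP optimality equation --- that is, to prove the pointwise inequality $g \le Hg$ --- and then to bootstrap this into $g \le v^*$ using the monotonicity of $H$ and the convergence of value iteration recorded in Section 3. Intuitively, $v_M$ is the optimal cost in a relaxed problem in which the decision-maker observes the modulation state, so it can only be cheaper than the SEP-POMDP, where the modulation state is known only in distribution through $\mathbf{b}$; the argument below makes this precise through $H$.

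First I would establish $g \le Hg$. Fix $(s,\mathbf{b})$ and an arbitrary $a \in \mathcal{A}(s)$. Since $v_M = H_M v_M$ arises from a minimization over actions, discarding the min gives, for each $\mu$,
\[ v_M(s,\mu) \le \sum_{y',\mu'} P[y',\mu' \vert \mu]\Big[ c(s,y',a) + \beta \sum_{s'} p(s' \vert y',s,a) v_M(s',\mu') \Big]. \]
Multiplying by $\mathbf{b}(\mu)$ and summing over $\mu$ produces $g(s,\mathbf{b})$ on the left-hand side. On the right, the immediate-cost term collapses to $\sum_{y'} \sigma(y' \vert \mathbf{b}) c(s,y',a)$ by the definition $\sigma(y' \vert \mathbf{b}) = \sum_{\mu,\mu'} \mathbf{b}(\mu) P[y',\mu' \vert \mu]$, using that $c(s,y',a)$ does not depend on $\mu'$.

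The crux is the continuation term, where I would use the Bayesian-consistency identity $\sum_{\mu} \mathbf{b}(\mu) P[y',\mu' \vert \mu] = \sigma(y' \vert \mathbf{b})\, \lambda(\mu' \vert y', \mathbf{b})$, which is exactly the definition of $\lambda$ in Section 3.1. Regrouping the sums over $\mu, \mu'$ then gives
\[ \beta \sum_{\mu} \mathbf{b}(\mu) \sum_{y',\mu'} P[y',\mu' \vert \mu] \sum_{s'} p(s' \vert y',s,a) v_M(s',\mu') = \beta \sum_{y'} \sigma(y' \vert \mathbf{b}) \sum_{s'} p(s' \vert y', s, a)\, g\big(s', \lambda(y',\mathbf{b})\big), \]
since $\sum_{\mu'} \lambda(\mu' \vert y',\mathbf{b})\, v_M(s',\mu') = g\big(s', \lambda(y',\mathbf{b})\big)$ by definition of $g$. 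Combining the two pieces, the right-hand side is precisely $\sum_{y'} \sigma(y' \vert \mathbf{b})\, h_{y'}\big(s,a,g(\cdot,\lambda(y',\mathbf{b}))\big)$. As $a$ was arbitrary, minimizing over $a \in \mathcal{A}(s)$ yields $g(s,\mathbf{b}) \le Hg(s,\mathbf{b})$.

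Finally I would bootstrap. The operator $H$ is monotone (if $u \le w$ pointwise then $Hu \le Hw$, as the costs are unchanged and $\sigma, p$ are nonnegative), so applying $H$ repeatedly to $g \le Hg$ produces a nondecreasing chain $g \le Hg \le H^2 g \le \cdots$. Because $g$ is bounded and $H$ is a contraction with unique fixed point $v^*$, Section 3 guarantees $H^n g \to v^*$; hence $g \le v^*$, which is the claim. I expect the main obstacle to be the bookkeeping in the continuation term --- in particular, recognizing that the joint factorizes as $\sigma \cdot \lambda$ so that the averaged future value is exactly $g$ evaluated at the updated belief $\lambda(y',\mathbf{b})$; the monotone-iteration step, by contrast, is routine once $g \le Hg$ is in hand.
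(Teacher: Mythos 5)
Your proof is correct, but it takes a genuinely different route from the paper's. The paper disposes of this proposition in one line via policy-class inclusion: every SEP-POMDP policy in $\Pi$ is feasible for the full-information MDP defined by $H_M$ (while the converse fails), so the relaxed optimal cost conditional on $\mu_0 = \mu$ lower-bounds the conditional cost of any SEP-POMDP policy, and averaging over $\mathbf{b}$ gives the bound --- the classic information-relaxation argument. You instead work entirely at the operator level: you show $g(s,\mathbf{b}) = \sum_\mu \mathbf{b}(\mu) v_M(s,\mu)$ is a sub-solution, $g \le Hg$, by dropping the min in $H_M v_M = v_M$, averaging against $\mathbf{b}$, and using the factorization $\sum_\mu \mathbf{b}(\mu) P[y',\mu'\vert\mu] = \sigma(y'\vert\mathbf{b})\,\lambda(\mu'\vert y',\mathbf{b})$ to recognize the continuation term as $g$ evaluated at the updated belief; monotonicity and contraction of $H$ then give $g \le \lim_n H^n g = v^*$. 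The algebra is right (the $\sigma$-$\lambda$ identity is exactly the definition of $\lambda$ in Section 3.1, and terms with $\sigma(y'\vert\mathbf{b})=0$ vanish harmlessly), and the bootstrap is sound since $g$ is bounded whenever $v_M$ is. What each approach buys: the paper's argument is shorter and carries the intuition that extra information can only reduce cost, but it leaves implicit the formalization of policies across different information structures; your sub-solution argument is self-contained, needs no comparison of policy classes, and makes explicit the Bayesian-consistency bookkeeping that the paper never writes down --- it is essentially a verification-theorem proof of the same relaxation bound.
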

	
	Proof of the proposition follows by straightforward observation that all SEP-POMDP policies in $\Pi$ are feasible for this MDP, but not all policies for this MDP are feasible for the SEP-POMDP. We remark that this bound may be improved by applying a proper penalty term, akin to a Lagrangian relaxation, an idea developed in \cite{Brown10} and \cite{Rogers07}.
	
	The fixed point of $v^\pi$ of any policy $\pi$ can serve as an upper bound on $v^*$, where $v^\pi$ is determined exactly or approximated by simulation. If $v^{\pi_h} - v_M$ is small, then $\pi$ is a good sub-optimal policy. As an example, let $\pi_M: \mathcal{S} \times \mathcal{M} \mapsto \mathcal{A}$ be an optimal policy for the MDP having operator $H_M$. We remark that $\pi_M$ is determined when the lower bound presented in Proposition \ref{prop: information relaxation} is computed. Let $\pi$ be the randomized policy $\pi(s,\mathbf{b}) = \pi_M(s, \mu)$ with probability $\mathbf{b}(\mu)$. We would expect this policy to be an excellent sub-optimal policy if observations of the modulation process were highly accurate. As another example, if $\pi_{\mathcal{B}'}$ is the optimal policy generated for the MDP in Step 3 of Figure 2 (a function from $\mathcal{S} \times \mathcal{B}'$ to $\mathcal{A}$), then one might consider $\pi_h(s, \mathbf{b}) = \pi_{\mathcal{B}'}(s, \bar{\mathbf{b}})$, where $\bar{\mathbf{b}} = \argmin_{\mathbf{b}' \in \mathcal{B}'} \norm{ \mathbf{b} - \mathbf{b}'}$.

	\subsection{Heuristic Solution Procedure}
	We now present an alternative, heuristic solution procedure that must be implemented in an online manner. The fundamental idea is to map the SEP-POMDP into a related completely observed MDP with a state space on $\mathcal{S} \times \mathcal{Y}$ rather than on $\mathcal{S} \times \mathcal{B}$. We may assume that $\mathcal{Y}$ is finite in its cardinality, and thus this mapping is a state space dimensionality reduction technique (as is the $\mathcal{B}'$ procedure, above). The tradeoff is that we must solve such an MDP at each time epoch in order to capture the belief dynamics.
	\begin{figure}[h]
		\hrulefill
		\centering
		\begin{enumerate}[start = 0]
			\item \textit{Initialization.} Assume $(s_0, \mathbf{b}_0)$ is given. Set $t = 0$.
			
			\item Solve the completely observed MDP for all $(s, y')$:
			\[	v'_{y'}(s, \mathbf{b}_t)	=	\min_{a \in A(s)} \left\{ c(s, y', a) + 
			\beta \sum_{s'}p(s' \vert y', s, a) \sum_{y''} \sigma(y'' \vert \lambda(y', \mathbf{b}_t)) v'_{y''}(s', \mathbf{b}_t) \right\}.	\]
			Let $\delta^*_{y'}(s, \mathbf{b}_t)$ be an optimal policy, mapping $\mathcal{S} \times \mathcal{Y}$ into $A$.
			
			\item Choose action $a_t$ to equal $\delta^*_{y'}(s_t, \mathbf{b}_t)$ with probability $\sigma(y' \vert \mathbf{b}_t)$.
			
			\item Observe the observation $y_{t+1}$ (which will equal $y'$ with probability $\sigma(y' \vert \mathbf{b}_t)$). Set $\mathbf{b}_{t+1} = \lambda(y_{t+1}, \mathbf{b}_t)$.
			
			\item Observe the state $s_{t+1}$ (which will equal $s'$ with probability $p(s' \vert y_{t+1}, s_t, a_t)$).
			
			\item Increment $t \gets t+1$; go to 1.
		\end{enumerate}
		\caption{Real-time heuristic method.}
		\label{heuristic procedure}
		\hrulefill
	\end{figure}
	
	The intuition behind the procedure begins with the observation of the following inequality
	\[	\min_{a \in \mathcal{A}(s)} \sum_{y'} \sigma(y' \vert x) h\big(s, a, v(\cdot, \lambda(y', \mathbf{b})) \big)	
	\geq \sum_{y'} \sigma(y' \vert \mathbf{b})  \min_{a \in \mathcal{A}(s)}  h\big(s, a, v(\cdot, \lambda(y', x)) \big).		\]
	By pulling the minimization inside the summation, the idea is to establish a lower bound on $v^*$ by solving a related problem. We formalize this intuition in the subsequent proposition. Let 
	\[	\tilde{H}_{y'} \tilde{v}(s,\mathbf{b})	=	\min_{a \in \mathcal{A}(s)}	\left\{ c(s, y', a) 	+ 	
	\beta \sum_{s'} p(s' \vert y', s, a) \sum_{y''} \sigma(y'' \vert \lambda(y', \mathbf{b})) \tilde{v}_{y''} \big(s', \lambda(y', \mathbf{b}) \big) \right\},	\]
	and let $\tilde{v}_{y'}$ be the unique fixed point of $\tilde{H}_{y'}$.
	
	\begin{proposition}
		$v^*(s,\mathbf{b}) 	\geq \sum_{y'} \sigma(y' \vert \mathbf{b}) \tilde{v}_{y'}(s, \mathbf{b})$, for all $(s, \mathbf{b}) \in \mathcal{S} \times \mathcal{B}$.
		\label{prop: heuristic method bound}
	\end{proposition}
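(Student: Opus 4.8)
The plan is to exhibit the proposed lower bound as a sub-solution of the SEP-POMDP optimality equation $v = Hv$ and then close the gap by monotone value iteration. To this end I would introduce the aggregated function
\[ w(s,\mathbf{b}) = \sum_{y'} \sigma(y' \vert \mathbf{b})\, \tilde{v}_{y'}(s,\mathbf{b}), \]
which lies in $V$ since each $\tilde{v}_{y'}$ is a bounded fixed point and $\sigma(\cdot \vert \mathbf{b})$ is a probability weight summing to one. (Existence and uniqueness of the family $\{\tilde{v}_{y'}\}$, taken as given in the statement, follows because the coupled operators assemble into a joint operator on the product space $V^{\mathcal{Y}}$ that is a $\beta$-contraction in the sup-norm.) The proposition is then equivalent to $v^* \geq w$ pointwise, and I would derive this from the single inequality $Hw \geq w$.

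To establish $Hw \geq w$, I would first substitute the definition of $w$ into the continuation term of $h_{y'}$, obtaining
\[ h_{y'}\big(s, a, w(\cdot, \lambda(y',\mathbf{b}))\big) = c(s,y',a) + \beta \sum_{s'} p(s' \vert y', s, a) \sum_{y''} \sigma\big(y'' \vert \lambda(y',\mathbf{b})\big)\, \tilde{v}_{y''}\big(s', \lambda(y',\mathbf{b})\big), \]
which is exactly the bracketed expression minimized in the definition of $\tilde{H}_{y'}$. Hence, using the fixed-point property $\tilde{v}_{y'} = \tilde{H}_{y'}\tilde{v}$, minimizing over the action recovers the fixed-point value, $\min_{a \in \mathcal{A}(s)} h_{y'}\big(s, a, w(\cdot, \lambda(y',\mathbf{b}))\big) = \tilde{v}_{y'}(s,\mathbf{b})$. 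Applying the elementary ``min of a sum dominates the sum of the mins'' inequality (the displayed inequality motivating the heuristic) then yields
\[ Hw(s,\mathbf{b}) = \min_{a} \sum_{y'} \sigma(y' \vert \mathbf{b})\, h_{y'}\big(s, a, w(\cdot, \lambda(y',\mathbf{b}))\big) \geq \sum_{y'} \sigma(y' \vert \mathbf{b}) \min_{a} h_{y'}\big(s, a, w(\cdot, \lambda(y',\mathbf{b}))\big) = w(s,\mathbf{b}). \]

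I would finish with monotone iteration. The operator $H$ is order-preserving (since $p \geq 0$, $\sigma \geq 0$, $\beta \geq 0$, and pointwise minima preserve inequalities), so $Hw \geq w$ bootstraps to $H^{n+1}w \geq H^{n}w \geq \cdots \geq w$ for all $n$. Because $H$ is a contraction with unique fixed point $v^*$, the iterates $H^n w$ converge to $v^*$ in sup-norm, and the pointwise bound survives the limit, giving $v^*(s,\mathbf{b}) \geq w(s,\mathbf{b})$ for all $(s,\mathbf{b}) \in \mathcal{S} \times \mathcal{B}$.

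The argument is largely bookkeeping once $w$ is identified, and I expect the one delicate point to be verifying that the continuation value after substituting $w$ into $h_{y'}$ matches the continuation inside $\tilde{H}_{y'}$ exactly: both must use the \emph{updated} belief $\lambda(y',\mathbf{b})$, and the inner average over $y''$ must be taken with respect to $\sigma(\cdot \vert \lambda(y',\mathbf{b}))$ rather than $\sigma(\cdot \vert \mathbf{b})$. Aligning these belief arguments is the crux; after that, the min-swap inequality and the monotone fixed-point passage are routine.
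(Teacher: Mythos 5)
Your proof is correct and follows essentially the route the paper intends: the paper offers only the ``pull the min inside the sum'' inequality as intuition and never writes out a formal proof, and your argument is precisely its formalization --- the identity $\min_{a} h_{y'}\big(s,a,w(\cdot,\lambda(y',\mathbf{b}))\big) = \tilde{v}_{y'}(s,\mathbf{b})$ holds exactly because $\tilde{H}_{y'}$ is built with the updated belief $\lambda(y',\mathbf{b})$ in both the weights $\sigma(\cdot\vert\lambda(y',\mathbf{b}))$ and the continuation argument, so the min-swap gives $Hw \geq w$, and monotonicity plus the contraction property of $H$ deliver $v^* \geq w$.
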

	
	Solving for $\{\tilde{v}_{y'}: z \in \mathcal{Y}\}$ is no more computationally tractable than solving for $v^*$ due to the cardinality of $\mathcal{B}$ and the dependence of $\tilde{v}_{y'}$ on $\lambda(y', \mathbf{b})$. In developing our heuristic procedure, we seek an approximation to $\{\tilde{v}_{y'}: z \in \mathcal{Y}\}$ for a fixed $x$. If we assume $\max_{y'} \norm{x - \lambda(y', x)}$ is small, then it is reasonable to assume that $\tilde{v}_{y''}(s', \lambda(y',x))$ is close to $\tilde{v}_{y''}(s', \mathbf{b})$ in many cases. This is effectively a \textit{learning rate} assumption (that learning is incremental and gradual), and is one that has been made in the literature, \textit{e.g.} \cite{Malladi17}. We then define a completely observed MDP with state space $S \times \mathcal{Y}$:
	\begin{equation} 
	v'_{y'}(s, \mathbf{b}) = \min_{a \in \mathcal{A}(s)} \left\{ c(s, y', a) + \beta \sum_{s'}p(s' \vert y', s, a) 
	\sum_{y''}\sigma(y'' \vert \lambda(y', \mathbf{b})) v'_{y''}(s', \mathbf{b})  \right\}
	\label{eq: approximate lower bound}
	\end{equation}
	This is the intuition behind step 2 in Figure \ref{heuristic procedure}. Since this approximation is for a fixed $x$, it is amenable to an online implementation, where this completely observed MDP is solved for each $x_t$.
	
	We remark that the following is \textit{likely} to be a valid inequality (although not necessarily)
	\[	v^*(s,\mathbf{b}) 	\geq \sum_{y'} \sigma(y' \vert \mathbf{b}) v'_{y'}(s,\mathbf{b}),	\]
	where $v'_{y'}$ is the fixed point of Equation \ref{eq: approximate lower bound}. We use Equation \ref{eq: approximate lower bound} to develop a heuristic that, for a given $(s, \mathbf{b})$, chooses action $\delta^*_{y'}(s,\mathbf{b})$ (an optimal policy mapping $\mathcal{S} \times \mathcal{Y}$ into $\mathcal{A}$, for this approximate MDP) with probability $\sigma(y' \vert \mathbf{b})$. This randomized policy is a \textit{probability matching} heuristic.
\end{appendices}

\end{document}